\numberwithin{equation}{section}
\newtheorem{theorem}{Theorem}[section]
\newtheorem{corollary}[theorem]{Corollary}
\newtheorem{lemma}[theorem]{Lemma}
\newtheorem{proposition}[theorem]{Proposition}
\theoremstyle{definition}
\newtheorem{example}[theorem]{Example}
\newtheorem{remark}[theorem]{Remark}
\begin{document}
	
	\title[On the Bott-Samelson-Demazure-Hansen varieties]{On the 
		geometry of the anti-canonical bundle of the Bott-Samelson-Demazure-Hansen varieties}
	
	\author[I. Biswas]{Indranil Biswas}
	
	\address{Tata Institute of Fundamental Research,
		Homi Bhabha Road, Colaba
		Mumbai 400005, India.}
	
	\email{indranil@math.tifr.res.in}
	
	\author[S. S. Kannan]{S. Senthamarai Kannan}
	
	\address{Chennai Mathematical Institute, Plot H1, SIPCOT IT Park, Siruseri, Kelambakkam, 603103, India.}
	
	\email{kannan@cmi.ac.in}
	
	\author[P. Saha]{Pinakinath Saha}
	
	\address{Tata Institute of Fundamental Research,
		Homi Bhabha Road, Colaba
		Mumbai 400005, India.}
	
	\email{psaha@math.tifr.res.in}
	
	\subjclass[2010]{14M15, 14L35, 14F25}
	
	\keywords{Bott-Samelson-Demazure-Hansen variety, Coxeter element, anti-canonical line 
		bundle, Fano, weak Fano.}
	
	\thanks{}
	
	\begin{abstract}
		Let $G$ be a semi-simple simply connected algebraic group over the field $\mathbb{C}$ of 
		complex numbers. Let $T$ be a maximal torus of $G,$ and let $W$ be the Weyl group of $G$ with 
		respect to $T$. Let $Z(w,\, \underline{i})$ be the Bott-Samelson-Demazure-Hansen variety 
		corresponding to a tuple $\underline{i}$ associated to a reduced expression of an element $w \,\in\, W.$ We 
		prove that for the tuple $\underline{i}$ associated to any reduced expression of a 
		minuscule Weyl group element $w,$ the anti-canonical line bundle on $Z(w,\,\underline{i})$ 
		is globally generated. As consequence, we prove
that $Z(w,\,\underline{i})$ is weak Fano.

Assume that $G$ is a simple algebraic group whose type is different from $A_2.$ Let 
$S\,=\,\{\alpha_{1},\,\cdots,\,\alpha_{n}\}$ be the set of simple roots. Let 
$w$ be such that support of $w$ is equal to $S.$ We prove that $Z(w,\,\underline{i})$ is Fano 
for the tuple $\underline{i}$ associated to any reduced expression of $w$ if and only if $w$ is 
a Coxeter element and $w^{-1}(\sum_{t=1}^{n}\alpha_{t})\,\in\, -S$.
\end{abstract}	

\maketitle 
	
\tableofcontents
	
\section{Introduction}

	Let $G$ be a semi-simple simply connected algebraic group over $\mathbb{C}$ and $T$ a maximal torus of $G.$ Let $W\,=\,N_{G}(T)/T$ denote the 
	Weyl group of $G$ with respect to $T.$ We denote the set of roots of $G$ with respect to $T$ by $R$. Let $B^{+}$ be a Borel subgroup of $G$ 
	containing $T$. The Borel subgroup of $G$ opposite to $B^{+}$ determined by $T$ is denoted by
	$B$, in other words, $B\,=\,n_{0}B^{+}n_{0}^{-1}$, where $n_{0}$ 
	is a representative in $N_{G}(T)$ of the longest element $w_{0}$ of $W$. Let $R^{+}\,\subset\, R$ be the set of positive roots of $G$ with 
	respect to the Borel subgroup $B^{+}$. For $\beta \,\in\, R^{+},$ we also use the notation $\beta \,>\, 0$.
	The set of positive roots of $B$ is equal 
	to the set $R^{-} \,:=\, -R^{+}$ of negative roots. Let $S \,=\, \{\alpha_1,\,\cdots,\,\alpha_n\}$ denote the set of simple roots in $R^{+},$
	where $n$ is the rank of $G.$ The simple reflection in $W$ corresponding to $\alpha_i$ is denoted by $s_{i}.$
	
	For any $w\,\in\, W,$ let
	\begin{equation}\label{sv}
		X(w)\,:=\,\overline{BwB}/B
	\end{equation}
	denote the Schubert variety in $G/B$ corresponding to $w.$ Let
	$\underline{i}\,=\,(i_{1},\,\cdots ,\,i_{r})$ be tuple associated with a reduced expression of $w\,=\,s_{i_{1}}\cdots s_{i_{r}}.$
	Let $Z(w ,\,\underline{i})$ be the Bott-Samelson-Demazure-Hansen variety (it is a natural desingularization of $X(w)$) corresponding to
	$(w,\underline{i}).$ It was first introduced by Bott and Samelson in a differential
	geometric and topological context (see \cite{BS}). Demazure in \cite{De} and Hansen in \cite{Han}
	independently adapted the construction in algebro-geometric situation, which explains the name.
	
In \cite{LLM}, Lakshmibai, Littelmann and Magyar studied the Standard Monomial Theory for 
globally generated (respectively, ample) line bundles on $Z(w,\,\underline{i}).$ As a 
consequence, they proved the cohomology vanishing theorems for globally generated (and also 
ample) line bundles. In \cite{LT}, Lauritzen and Thompsen characterized the globally generated, 
ample and very ample line bundles on $Z(w,\,\underline{i}).$ In \cite{CKP}, Chary, Kannan and 
Parameswaran studied the cohomology of tangent bundle on $Z(w,\,\underline{i}),$ they proved 
that the higher cohomology groups of the tangent bundle on 
$Z(w,\,\underline{i})$ vanish for any simply-laced group $G.$

	In \cite{And}, Anderson computed the cone of effective divisors on a Bott-Samelson-Demazure-Hansen variety corresponding to an arbitrary 
	sequence of simple roots. In \cite{Cha}, Chary characterized all the tuples $\underline{i}$ associated to the reduced expressions of $w$ such that 
	$Z(w,\,\underline{i})$ is Fano or weak Fano. In \cite{Cha1}, Chary classified Fano, weak Fano and log Fano Bott-Samelson-Demazure-Hansen varieties and their
toric limits in Kac--Moody setting.

Fix an ordering of the simple roots as in the Dynkin diagram in
\cite[p.~58]{Hum1}. Let $w$ and $\underline{i}$ be as above. Then by the
results in \cite[p.~464, Section 3.1]{LT} we have
	$$
K_{Z(w,\underline{i})}^{-1}\,=\,\mathcal{O}_{1}(m_{1})\otimes \cdots \otimes \mathcal{O}_{r}(m_{r})
$$ for some $(m_{1},\, \cdots ,\, m_{r})\,\in \, \mathbb{Z}^{r}$ (for more precisely see Section \ref{Sec2}).

In view of these works, it is natural to ask the following questions:
	
\begin{enumerate}
\item Is there a formula for $m_{j}$'s in terms of the Cartan integers ?
		
\item Is there a class of $Z(w,\,\underline{i})$ for which the 
anti-canonical line bundle $K_{Z(w,\,\underline{i})}^{-1}\, \longrightarrow\, 
Z(w,\,\underline{i})$ is globally generated?

\item For which $Z(w,\,\underline{i}),$ do all the higher cohomology groups of 
$K_{Z(w,\underline{i})}^{-1}$ vanish?

\item Is there a criterion for which $Z(w,\,\underline{i})$ is Fano for the 
tuple $\underline{i}$ associated to any reduced expression of $w$?

\item Does the $B$-module $H^{0}(Z(w,\,\underline{i}),\, K_{Z(w,\underline{i})}^{-1})$ contain a 
unique $B$-stable line? If so what is the lowest weight of 
$H^0(Z(w,\,\underline{i}),\,K_{Z(w,\underline{i})}^{-1})?$
\end{enumerate}

In this article, we give answers to some of these questions.

We now state our results.

\begin{proposition}\label{prop1}
	For any integer $1\,\le\, j\,\le \,r,$ we have $m_{j}\,=\,\langle \sum_{l=j}^{r}\alpha_{i_{l}},\,\alpha_{i_{j}} \rangle$ if there is no integer
	$j+1\,\le\, k\,\le\, r$ such that $i_{k}\,=\,i_{j}.$ Otherwise, $m_{j}\,=\,\langle \sum_{l=j}^{k-1}\alpha_{i_{l}},\,
	\alpha_{i_{j}}\rangle$ where $k$ is the least integer $j+1\,\le\, k\,\le\, r$ such that $i_{k}\,=\,i_{j}$. $($ $\langle \cdot, \cdot \rangle$ is defined in \eqref{eq2.1} $)$.
\end{proposition}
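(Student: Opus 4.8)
The plan is to realize $Z := Z(w,\underline{i})$ as an iterated $\mathbb{P}^1$-bundle and to verify the claimed identity of line bundles by intersecting both sides against the fibre classes of the successive fibrations. Set $Z_j:=Z(s_{i_1}\cdots s_{i_j},(i_1,\ldots,i_j))$ for $0\le j\le r$, so that $Z_0$ is a point and $Z_r=Z$, and let $\pi_j\colon Z_j\to Z_{j-1}$ — and, by abuse of notation, also $\pi_j\colon Z\to Z_{j-1}$ — be the morphism forgetting the last factor; each $\pi_j$ is a $\mathbb{P}^1$-bundle with fibre $P_{i_j}/B$. Write $C_j\subset Z$ for the class of a fibre of $\pi_j$, realized as the image of a fibre of $Z_j\to Z_{j-1}$ under the section of $Z_r\to Z_j$ given by the base point $eB$ in every later factor. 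Recall from \cite[Section~3.1]{LT} (see also Section~\ref{Sec2}) that $\mathcal{O}_j(1)$ is the pull-back of the homogeneous line bundle on $G/B$ attached to $\omega_{i_j}$ under the morphism $Z\to G/B$, $[p_1,\ldots,p_r]\mapsto p_1\cdots p_jB$, and that $\operatorname{Pic}(Z)$ is free of rank $r$ with basis $\{\mathcal{O}_j(1)\}_{1\le j\le r}$. The first step is the intersection matrix: $\mathcal{O}_j(1)\cdot C_l=0$ for $l>j$, since $C_l$ is then contracted to a point in $Z_j$; and for $l\le j$ the image of $C_l$ in $G/B$ is a $G$-translate of the line $P_{i_l}/B$, so $\mathcal{O}_j(1)\cdot C_l=\langle\omega_{i_j},\,\alpha_{i_l}\rangle=\delta_{i_j,i_l}$. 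This matrix is unipotent upper triangular, hence invertible over $\mathbb{Z}$, so a line bundle on $Z$ is determined by its intersection numbers with $C_1,\ldots,C_r$; it therefore suffices to check that $K_{Z}^{-1}$ and $\mathcal{O}_1(m_1)\otimes\cdots\otimes\mathcal{O}_r(m_r)$ meet each $C_l$ in the same degree.

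Next I would compute $K_{Z}^{-1}\cdot C_l=\deg\big(TZ|_{C_l}\big)$ from the tower. The tangent bundle $TZ$ is a successive extension of the pull-backs $\pi_j^{*}T_{Z_j/Z_{j-1}}$, so the degree in question equals $\sum_{j=1}^{r}\big(\pi_j^{*}T_{Z_j/Z_{j-1}}\big)\cdot C_l$. For $j<l$ the curve $C_l$ maps to a point in $Z_j$ and the term vanishes; for $j=l$ one gets $\deg\mathcal{O}_{\mathbb{P}^1}(2)=2=\langle\alpha_{i_l},\alpha_{i_l}\rangle$; and for $j>l$ the bundle $T_{Z_j/Z_{j-1}}$ restricted along the section $eB$ of $\pi_j$ is the line bundle on $Z_{j-1}$ associated to the $T$-weight of $\mathfrak{p}_{i_j}/\mathfrak{b}$, namely $-\alpha_{i_j}$, and its restriction to the image of $C_l$ (a line $\cong P_{i_l}/B$) has degree $\langle\alpha_{i_j},\alpha_{i_l}\rangle$. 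Summing,
$$
K_{Z}^{-1}\cdot C_l=\Big\langle\,\sum_{m=l}^{r}\alpha_{i_m},\ \alpha_{i_l}\,\Big\rangle .
$$

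Then I would match this against the other side. With $m_j$ as in the statement,
$$
\big(\mathcal{O}_1(m_1)\otimes\cdots\otimes\mathcal{O}_r(m_r)\big)\cdot C_l=\sum_{j=1}^{r}m_j\,\big(\mathcal{O}_j(1)\cdot C_l\big)=\sum_{\substack{j\ge l\\ i_j=i_l}}m_j .
$$
List the indices $j\ge l$ with $i_j=i_l$ in increasing order as $l=j_1<j_2<\cdots<j_s$. Then the ``next occurrence of $i_{j_a}$'' appearing in the definition of $m_{j_a}$ is $j_{a+1}$ for $a<s$, while for $a=s$ there is none; hence, using $\alpha_{i_{j_a}}=\alpha_{i_l}$, we get $m_{j_a}=\big\langle\sum_{t=j_a}^{j_{a+1}-1}\alpha_{i_t},\,\alpha_{i_l}\big\rangle$ for $a<s$ and $m_{j_s}=\big\langle\sum_{t=j_s}^{r}\alpha_{i_t},\,\alpha_{i_l}\big\rangle$. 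Since the ranges $[j_1,j_2-1],\ldots,[j_{s-1},j_s-1],[j_s,r]$ partition $[l,r]$, the sum telescopes to $\big\langle\sum_{m=l}^{r}\alpha_{i_m},\,\alpha_{i_l}\big\rangle$, which by the previous step equals $K_{Z}^{-1}\cdot C_l$. As $l$ was arbitrary, the two line bundles coincide.

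I expect the main obstacle to be bookkeeping rather than any serious input. One must work with \emph{exactly} the normalization of $\mathcal{O}_j(1)$ from \cite{LT}, so that the intersection matrix above is unipotent but \emph{not} diagonal: it is precisely the off-diagonal entries $\delta_{i_j,i_l}$ with $l<j$ that force the summation defining $m_j$ to stop at the first later repetition of $i_j$, whereas the ``dual basis'' normalization would give instead the untruncated value $\langle\sum_{m\ge j}\alpha_{i_m},\,\alpha_{i_j}\rangle$. Correctly pinning down the $T$-weight $-\alpha_{i_j}$ of $\mathfrak{p}_{i_j}/\mathfrak{b}$, together with the sign it produces in the degree computation of the second step, is the other place where some care is required.
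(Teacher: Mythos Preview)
Your argument is correct and takes a genuinely different route from the paper's. The paper proceeds entirely inside $\operatorname{Pic}(Z(w,\underline{i}))$: it first shows (Lemma~\ref{lem5.2}) that the relative tangent bundle of each $\mathbb{P}^1$-fibration is $\mathcal{O}_j(\alpha_{i_j})$, so that $K_Z^{-1}=\bigotimes_{j=1}^{r}\mathcal{O}_j(\alpha_{i_j})$, and then proves a change-of-basis lemma (Lemma~\ref{lem5.1}) expressing each $\mathcal{O}_j(\lambda)$ in the basis $\{\mathcal{O}_t(1)\}$ via the embedding $G/B\hookrightarrow\prod_k G/P_{S\setminus\{\alpha_k\}}$; combining these yields the formula for $m_j$. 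You instead pair both sides against the fibre curves $C_l$, compute the unipotent triangular intersection matrix $\mathcal{O}_j(1)\cdot C_l=\delta_{i_j,i_l}$ for $l\le j$, and finish with a telescoping sum. Both arguments ultimately use the same tangent-bundle filtration, but your approach replaces the Picard-group identity of Lemma~\ref{lem5.1} by an elementary numerical check; the paper's approach has the advantage of producing that transformation rule as a standalone statement.

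Two small points of bookkeeping. First, in the paper's conventions $B$ is the \emph{opposite} Borel, so $\mathfrak{p}_{\alpha_{i_j}}/\mathfrak{b}\cong\mathfrak{g}_{\alpha_{i_j}}$ has $T$-weight $+\alpha_{i_j}$, not $-\alpha_{i_j}$; your final degree $\langle\alpha_{i_j},\alpha_{i_l}\rangle$ is nevertheless correct (and agrees with the paper's identification $T_{Z_j/Z_{j-1}}\cong\mathcal{O}_j(\alpha_{i_j})$). Second, your ``$\pi_j^{*}T_{Z_j/Z_{j-1}}$'' should be read as the pull-back along $Z\to Z_j$ (the map the paper denotes $f_j$), not along $\pi_j\colon Z\to Z_{j-1}$. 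Neither point affects the validity of the proof.
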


A fundamental weight $\omega$ is said to be minuscule if $\omega$ satisfies 
$\langle \omega,\, \beta \rangle\,\le \,1$ for all $\beta \,\in\, R^{+}.$

If $\omega_{m}$ is a minuscule fundamental weight corresponding to the simple 
root $\alpha_{m},$ then the standard parabolic subgroup $P_{S\setminus\{\alpha_{m}\}}$ of $G$ 
corresponding to the subset $S\setminus\{\alpha_{m}\}$ of $S$ is called minuscule maximal 
parabolic subgroup of $G.$

Let $W_{S\setminus\{\alpha_{m}\}}$ denote the Weyl group of 
$P_{S\setminus\{\alpha_{m}\}}.$ Let $W^{S\setminus \{\alpha_{m}\}}$ denote the set of minimal 
coset representative of $W/W_{S\setminus\{\alpha_{m}\}}$ in $W.$ For minuscule fundamental 
weight $\omega_{m},$ the elements of $W^{S\setminus \{\alpha_{m}\}}$ are called minuscule Weyl 
group elements.

We prove the following theorem (see Theorem \ref{lem1.3}):
	
	\begin{theorem}\label{Thm1.2}
	 For the tuple $\underline{i}$ associated to any reduced expression  of a minuscule Weyl group element $w,$ the anti-canonical
		line bundle $K_{Z(w,\underline{i})}^{-1}$ on $Z(w,\,\underline{i})$ is globally generated. In particular, all the higher
		cohomologies of $K_{Z(w,\underline{i})}^{-1}$ vanish.
	\end{theorem}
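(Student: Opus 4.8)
The plan is to use the explicit description of the anti-canonical bundle via Proposition \ref{prop1} together with the combinatorics of minuscule elements. Recall that $K_{Z(w,\underline{i})}^{-1}\,=\,\mathcal{O}_{1}(m_{1})\otimes\cdots\otimes\mathcal{O}_{r}(m_{r})$, and by \cite{LT} a line bundle $\mathcal{O}_{1}(a_{1})\otimes\cdots\otimes\mathcal{O}_{r}(a_{r})$ on $Z(w,\underline{i})$ is globally generated precisely when a certain system of inequalities (of the form: partial sums of the $a_{l}$, weighted by Cartan integers along the tuple, are non-negative) holds. So the first step is to recall that criterion precisely, and reduce the theorem to verifying those inequalities for the tuple $(m_{1},\ldots,m_{r})$ coming from Proposition \ref{prop1}.

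Next I would exploit the defining property of a minuscule Weyl group element $w\,\in\,W^{S\setminus\{\alpha_{m}\}}$: for any reduced expression $w\,=\,s_{i_{1}}\cdots s_{i_{r}}$, the positive roots sent to negatives by $w^{-1}$ are exactly $\beta_{j}\,:=\,s_{i_{r}}s_{i_{r-1}}\cdots s_{i_{j+1}}(\alpha_{i_{j}})$, and each such $\beta_{j}$ is of the form $\omega_{m}$-``level one'' — more concretely, in the minuscule case the combinatorics forces that for each index $i$, the positions $j$ with $i_{j}=i$ interact with their neighbours in a very controlled way. The key local fact I expect to need is: if $j<k$ are consecutive occurrences of the same index ($i_{j}=i_{k}=i$, no index equal to $i$ strictly between), then $\langle\sum_{l=j+1}^{k-1}\alpha_{i_{l}},\alpha_{i}\rangle\,=\,-1$ — this is precisely the minuscule condition $\langle\omega,\beta\rangle\le 1$ transcribed along the word — so that $m_{j}\,=\,\langle\alpha_{i_{j}},\alpha_{i_{j}}\rangle+\langle\sum_{l=j+1}^{k-1}\alpha_{i_{l}},\alpha_{i_{j}}\rangle\,=\,2-1\,=\,1$. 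For the last occurrence of an index $i$ one gets $m_{j}\ge 0$ similarly. Establishing this local statement from the minuscule hypothesis is, I expect, the technical heart of the argument.

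Granting the local computation, I would then plug the resulting values of the $m_{j}$ (each lying in $\{0,1\}$, with a clean pattern: $1$ at every non-final occurrence of an index, and a non-negative value at the final occurrence) into the Lauritzen–Thompsen global-generation inequalities. The partial-sum inequalities should then follow because removing a non-final occurrence of $\alpha_{i}$ from an initial segment only changes the relevant weighted sum by a controlled amount, and the contributions telescope. I would organize this as a downward induction on $j$ (or on $r$), peeling off $s_{i_{1}}$ and using that $s_{i_{1}}w$ is again minuscule (for the parabolic $W^{S\setminus\{\alpha_{m}\}}$, deleting a simple reflection from a reduced word of a minimal coset representative again gives a minimal coset representative).

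The ``in particular'' statement is then immediate: $Z(w,\underline{i})$ is a smooth projective rationally connected (in fact, iterated $\mathbb{P}^{1}$-bundle) variety, so it has rational singularities and the Kawamata–Viehweg vanishing theorem applies to the globally generated — hence nef — line bundle $K_{Z(w,\underline{i})}^{-1}$, giving $H^{i}(Z(w,\underline{i}),\,K_{Z(w,\underline{i})}^{-1}\otimes K_{Z(w,\underline{i})})\,=\,H^{i}(Z(w,\underline{i}),\,\mathcal{O})\,=\,0$ for $i>0$; alternatively one invokes the cohomology-vanishing results of \cite{LLM} for globally generated line bundles on $Z(w,\underline{i})$ directly. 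The main obstacle, as indicated, is the passage from the abstract minuscule inequality $\langle\omega_{m},\beta\rangle\le 1$ to the concrete sign statement $\langle\sum_{l=j+1}^{k-1}\alpha_{i_{l}},\alpha_{i_{j}}\rangle\,=\,-1$ along a reduced word, which requires keeping careful track of how the partial products $s_{i_{k-1}}\cdots s_{i_{j+1}}$ act on $\alpha_{i_{j}}$.
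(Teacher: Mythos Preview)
Your overall strategy is essentially the paper's: use Proposition~\ref{prop1} to compute each $m_{j}$, show $m_{j}\ge 0$ via the minuscule condition, invoke Lauritzen--Thompsen for global generation, and cite \cite{LLM} for the cohomology vanishing. Two corrections are needed, however.

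First, the Lauritzen--Thompsen criterion you allude to is simpler than you suggest: in the basis $\{\mathcal{O}_{j}(1)\}$, the line bundle $\mathcal{O}_{1}(m_{1})\otimes\cdots\otimes\mathcal{O}_{r}(m_{r})$ is globally generated if and only if $m_{j}\ge 0$ for every $j$ (see \cite[Corollary~3.3]{LT}). There is no system of partial-sum inequalities to check, so your telescoping/induction-by-peeling-off-$s_{i_{1}}$ step is unnecessary.

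Second, your ``key local fact'' has the wrong value. If $j<k$ are consecutive occurrences of the same index, then in fact $\langle\sum_{l=j+1}^{k-1}\alpha_{i_{l}},\,\alpha_{i_{j}}\rangle=-2$, not $-1$, so $m_{j}=0$ at non-final occurrences (and $m_{j}=1$ at a final occurrence with $j<r$, while $m_{r}=2$). The paper obtains this by writing the partial products $v_{1}=s_{i_{j+1}}\cdots s_{i_{r}}$ and $v_{2}=s_{i_{k}}\cdots s_{i_{r}}$ and using that, since $\omega_{m}$ is minuscule and $w\in W^{S\setminus\{\alpha_{m}\}}$, one has $v(\omega_{m})=\omega_{m}-\sum\alpha_{i_{l}}$ along any tail of the word; then $\langle v_{1}(\omega_{m}),\alpha_{i_{j}}\rangle=1$ and $\langle v_{2}(\omega_{m}),\alpha_{i_{j}}\rangle=-1$, whose difference gives the pairing $-2$. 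Your heuristic ``this is the minuscule inequality transcribed along the word'' is the right intuition, but it yields $\pm 1$ for $\langle v(\omega_{m}),\alpha\rangle$, not for the root-sum pairing itself.
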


For $w\,\in\, W,$ we define ${\rm supp}(w)\,:=\,\{\alpha_{i}\,\in\, S
\,\big\vert\,\, s_{i}\, {~\rm 
appears~in ~some ~reduced ~expression ~of ~}\, w \}.$

An element $w\,\in\, W$ is said to be a Coxeter element if 
$w\,=\,s_{\sigma(1)}\cdots s_{\sigma(n)}$ for some $\sigma\,\in\, S_{n},$ where $S_{n}$ denotes the 
symmetric group on $n$ letters $1,\,\cdots,\, n.$

We also prove the following theorem (see Theorem \ref{prop1.4}):

\begin{theorem}\label{thm1.3}
Let $G$ be a simple algebraic group whose type is different from $A_2.$ Let $w$ be such that ${\rm supp}(w)\,=\,S.$ Then
$Z(w,\,\underline{i})$ is Fano for the tuple $\underline{i}$ associated to
any reduced expression of $w$ if and only if $w$ is a Coxeter
element and $w^{-1}(\sum_{t=1}^{n}\alpha_{t})\,\in\, -S$.
\end{theorem}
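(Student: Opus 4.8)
The plan is to use Proposition~\ref{prop1} to reduce the Fano condition to an explicit combinatorial statement about the tuple $\underline{i}$, and then exploit the freedom in the choice of reduced expression. Recall that $K_{Z(w,\underline{i})}^{-1}\,=\,\mathcal{O}_{1}(m_{1})\otimes\cdots\otimes\mathcal{O}_{r}(m_{r})$, and by the characterization of ample line bundles on $Z(w,\underline{i})$ in \cite{LT}, ampleness of this bundle is equivalent to the strict positivity of a sequence of partial sums of the $m_{j}$ built from the fibration structure (concretely, for each $j$ the sum $\sum$ of $m_{l}$ over those $l\ge j$ with $i_{l}=i_{j}$, together with cross-terms, must be positive). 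So the first step is to write down precisely, using Proposition~\ref{prop1} and \cite[Section~3.1]{LT}, the inequalities that characterize ``$K_{Z(w,\underline{i})}^{-1}$ ample'' purely in terms of the Cartan integers $\langle\alpha_{i_{l}},\alpha_{i_{j}}\rangle$ and the combinatorics of $\underline{i}$.

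Next I would prove the ``if'' direction. Assume $w$ is a Coxeter element with $w^{-1}(\sum_{t=1}^{n}\alpha_{t})\in -S$; then every reduced expression of $w$ has the form $\underline{i}=(i_{1},\dots,i_{n})$ with the $i_{j}$ pairwise distinct, so by Proposition~\ref{prop1} we have $m_{j}=\langle\sum_{l=j}^{n}\alpha_{i_{l}},\alpha_{i_{j}}\rangle=2+\sum_{l>j}\langle\alpha_{i_{l}},\alpha_{i_{j}}\rangle$. Since all off-diagonal Cartan integers are $\le 0$, I would show that the hypothesis $w^{-1}(\sum_{t}\alpha_{t})\in -S$ forces exactly the right amount of cancellation so that each relevant partial sum is positive; the condition $w^{-1}(\sum_{t}\alpha_{t})\in-S$ is precisely the translation, via $w=s_{i_{1}}\cdots s_{i_{n}}$, of the statement that $\sum_{j}m_{j}\cdot(\text{something})$ lands on a single simple root, which is what makes the anti-canonical bundle land in the ample cone rather than merely on its boundary. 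The exclusion of type $A_{2}$ enters here because in $A_{2}$ one has $\sum\alpha_{t}=\alpha_{1}+\alpha_{2}$ equal to the highest root, a minuscule-type coincidence that makes the relevant inequality degenerate.

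For the ``only if'' direction I would argue by contraposition in two stages. First, if $w$ is \emph{not} a Coxeter element, then some simple reflection $s_{i}$ repeats in $\underline{i}$ (for \emph{some}, hence by the quantifier ``any reduced expression'' it suffices to exhibit one bad $\underline{i}$, but in fact since $\mathrm{supp}(w)=S$ and $\ell(w)>n$ every reduced expression has a repeat); then for the first repeated index the formula $m_{j}=\langle\sum_{l=j}^{k-1}\alpha_{i_{l}},\alpha_{i_{j}}\rangle$ with $i_{k}=i_{j}$ gives a strictly smaller value, and I would show one can choose the reduced expression so that a partial sum drops to $\le 0$, killing ampleness; the standard move is to use commuting and braid relations to bring two occurrences of the same $s_{i}$ as close together as possible. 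Second, if $w$ is a Coxeter element but $w^{-1}(\sum_{t=1}^{n}\alpha_{t})\notin-S$, then the single partial-sum inequality that the Coxeter case reduces to fails (the anti-canonical class sits on the boundary of the ample cone), and I would exhibit the failing inequality directly from Proposition~\ref{prop1}. The main obstacle I anticipate is the bookkeeping in the ``only if, non-Coxeter'' case: one must show that \emph{no} reduced expression of a non-Coxeter $w$ (with full support, type $\ne A_{2}$) gives an ample anti-canonical bundle, which requires understanding how the partial sums transform under braid moves and ruling out the possibility that some clever reduced expression keeps all inequalities strict; handling the braid relations in the doubly-laced and $G_{2}$ cases, where the Cartan integers can be $-2$ or $-3$, will need care and is where I would expect to spend most of the effort.
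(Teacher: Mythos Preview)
Your proposal rests on a mistaken ampleness criterion. You write that ampleness of $K_{Z(w,\underline{i})}^{-1}=\mathcal{O}_{1}(m_{1})\otimes\cdots\otimes\mathcal{O}_{r}(m_{r})$ is equivalent to ``strict positivity of a sequence of partial sums of the $m_{j}$ built from the fibration structure \dots\ together with cross-terms.'' This is not the criterion in \cite{LT}: as recalled in Section~\ref{Sec2} of the paper, in the basis $\{\mathcal{O}_{j}(1)\}_{1\le j\le r}$ one has that $\mathcal{O}_{1}(m_{1})\otimes\cdots\otimes\mathcal{O}_{r}(m_{r})$ is very ample if and only if $m_{j}>0$ for every $j$, with no partial sums or cross-terms involved. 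Once you use the correct criterion, the problem becomes the concrete combinatorial statement ``$m_{j}\ge 1$ for all $j$ and all reduced words,'' which is exactly what the paper analyses via Proposition~\ref{lem5.3}. Your vaguer formulation would make both directions harder than they are and, more importantly, would lead you to prove the wrong inequalities.

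Two further points. First, you locate the $A_{2}$ exclusion in the ``if'' direction; it actually belongs to the ``only if'' direction. In type $A_{2}$ the longest element $w_{0}=s_{1}s_{2}s_{1}$ has full support, is \emph{not} a Coxeter element, yet $Z(w_{0},\underline{i})$ is Fano for every reduced expression (see Example~6.3), so the implication ``Fano for all $\underline{i}$ $\Rightarrow$ Coxeter'' fails there. The paper's argument uses the non-$A_{2}$ hypothesis precisely to guarantee an extra simple root adjacent to the local $A_{2}$-configuration produced by a repeat, which is what forces some $m_{j}\le 0$ after a braid move. Second, your plan for the non-Coxeter case (``bring two occurrences of the same $s_{i}$ close together via commuting and braid relations'') is on the right track and matches the paper's approach, but the paper does more than bookkeeping: it shows that Fano forces the two nearest repeats to bound an $A_{2}$-pattern $s_{e}s_{f}s_{e}$ with $\langle\alpha_{e},\alpha_{f}\rangle=\langle\alpha_{f},\alpha_{e}\rangle=-1$, then uses connectedness of the Dynkin diagram (and $G\ne A_{2}$) to find a neighbouring vertex and, in four short cases, exhibits a reduced expression with some $m_{j}\le 0$. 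You should aim for that level of specificity rather than anticipating a general braid-move analysis in the doubly-laced and $G_{2}$ cases; with the correct (simple) ampleness criterion, those cases do not require separate treatment.
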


Now a natural question is whether there exists an element $w\,\in\, W$ satisfying
the assertions of Theorem \ref{thm1.3}.
The following gives an affirmative answer for this question (see Lemma \ref{lem 5.9}):
\begin{lemma}
	Let $G$ be a simple algebraic group such that rank of $G$ is at least two. Then the following is the 
(non-constructive) list of Coxeter elements $c$ satisfying 
	$c^{-1}(\sum_{t=1}^{n}\alpha_{t})\,\in\, -S$:
	\begin{itemize}
		\item[(1)] If $G$ is simply-laced, for every simple root $\alpha_{i},$ there is a unique Coxeter element
		$c_{i}$ such that $c_{i}^{-1}(\sum_{t=1}^{n}\alpha_{t})\,=\,-\alpha_{i}.$
		
		\item[(2)] If $G$ is not simply-laced, for every short simple root $\alpha_{i}$, there is a unique Coxeter element $c_{i}$ such
		that $c_{i}^{-1}(\sum_{t=1}^{n}\alpha_{t})\,=\,-\alpha_{i}.$ For long simple root $\alpha_{i}$ there is no such Coxeter element.
	\end{itemize}
\end{lemma}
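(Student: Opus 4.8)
The plan is to analyze the equation $c(-\alpha_i) = \sum_{t=1}^n \alpha_t$ directly, using the combinatorics of Coxeter elements. Recall that a Coxeter element $c = s_{\sigma(1)}\cdots s_{\sigma(n)}$ is determined by a total order on the simple roots (up to commuting moves), and that for such an order the set of inversions of $c$ is particularly simple: $c$ sends exactly one positive root to a negative root among those supported on any "initial segment," and in fact the negative roots $\{c^{-1}(\beta) : \beta > 0,\ c^{-1}(\beta) < 0\}$ are in bijection with the simple roots. The key structural fact I would extract first: for a Coxeter element $c$, the element $c^{-1}(\rho - s\cdot\rho)$-type computations reduce to looking at which simple root is "last." More concretely, writing $c = s_{j_1}\cdots s_{j_n}$, one checks that $c^{-1}(\alpha_{j_1}) = -(\text{something positive})$ and iterating, so I would aim to show that $c^{-1}(\sum_t \alpha_t)$ lies in $-S$ forces strong constraints on the order $j_1,\dots,j_n$.

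The key step is a reformulation: $c^{-1}(\sum_{t=1}^n \alpha_t) \in -S$ is equivalent to $\sum_{t=1}^n \alpha_t = c(-\alpha_i) = -c(\alpha_i)$ for some $i$, i.e. $c(\alpha_i) = -\sum_t \alpha_t = -\theta'$ where $\theta' := \sum_t\alpha_t$ is the sum of all simple roots (note this is the highest root only in type $A$; in general it need not even be a root, which is why we must be careful). So I first must verify that $\theta'$ is a root: this holds precisely in types $A_n, B_n, C_n$ and fails otherwise — wait, this is not so. Let me instead argue: if $c(\alpha_i) = -\theta'$ then $\theta'$ must be a root (being $\pm c(\alpha_i)$), so the first real task is to determine for which types $\sum_t \alpha_t$ is a root and whether a Coxeter element can carry a simple root to its negative. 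I would handle this using the standard description: for a Coxeter element $c$ with reduced word $s_{j_1}\cdots s_{j_n}$, the positive roots sent to negatives by $c^{-1}$ are exactly $\gamma_k := s_{j_1}\cdots s_{j_{k-1}}(\alpha_{j_k})$ for $k = 1,\dots,n$, and $c^{-1}(\gamma_k) = -s_{j_n}\cdots s_{j_{k+1}}(\alpha_{j_k})$. Thus $c^{-1}(\theta') \in -S$ forces $\theta' = \gamma_k$ for some $k$ and simultaneously $s_{j_n}\cdots s_{j_{k+1}}(\alpha_{j_k}) \in S$, which (since $s_{j_n}\cdots s_{j_{k+1}}$ only involves simple reflections with indices $\neq j_k$) forces $k = n$, i.e. $\theta' = \gamma_n = s_{j_1}\cdots s_{j_{n-1}}(\alpha_{j_n})$ and then $c^{-1}(\theta') = -\alpha_{j_n}$.

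So the problem reduces to: \emph{for which simple roots $\alpha_m$ does there exist an ordering $j_1,\dots,j_{n-1}$ of $S \setminus \{\alpha_m\}$ with $s_{j_1}\cdots s_{j_{n-1}}(\alpha_m) = \sum_{t=1}^n \alpha_t$, and is it unique?} Here I would use an inductive argument on $n$ along the Dynkin diagram: peeling off $s_{j_1}$, one needs $s_{j_2}\cdots s_{j_{n-1}}(\alpha_m)$ to be a root whose $s_{j_1}$-image is $\theta'$; comparing coefficients of simple roots (all must equal $1$ in $\theta'$), one sees $j_1$ must be a leaf of the Dynkin diagram on $S\setminus\{\alpha_m\}$ adjacent appropriately, and the coefficient-$1$ condition propagates. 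The existence and uniqueness of $c_i$ for each (short, in the non-simply-laced case) simple root $\alpha_i$ should fall out of this propagation being forced at every stage; the obstruction for long simple roots $\alpha_i$ comes because carrying $\alpha_i$ (long) to $\theta'$ via simple reflections would require $\theta'$ to have the same length as $\alpha_i$, but $\theta'$ — having a short simple root in its support with coefficient $1$ — cannot be long when a short root is "exposed," giving the contradiction.

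\medskip

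\textbf{Main obstacle.} The delicate point is the uniqueness claim and the clean case division short/long. The reduction to "$\theta'$ must be a root and $c^{-1}(\theta') = -\alpha_{j_n}$" is robust, but verifying that for each short simple root $\alpha_i$ there is \emph{exactly one} admissible ordering requires a careful induction over the shape of the Dynkin diagram — branch nodes (type $D_n$, $E_n$) are where uniqueness could plausibly fail, and one must check that the coefficient constraints (every simple root appears in $\theta'$ with multiplicity one) rigidify the order at each branch. I expect this to be the step consuming most of the work; it may be cleanest to do it type-by-type using the explicit highest-root / root-string data from \cite{Hum1}, rather than attempting a uniform diagram argument. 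The length obstruction for long roots, by contrast, should be a short direct argument once the reduction is in place.
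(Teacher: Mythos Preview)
Your reduction is sound and lands on essentially the same inductive structure as the paper, but there is one genuine slip. Your justification for $k=n$ is wrong: the fact that $s_{j_n}\cdots s_{j_{k+1}}$ involves only indices $\neq j_k$ does \emph{not} prevent $s_{j_n}\cdots s_{j_{k+1}}(\alpha_{j_k})$ from being simple (if $\alpha_{j_k}$ is orthogonal to all of $\alpha_{j_{k+1}},\dots,\alpha_{j_n}$, the image is $\alpha_{j_k}$ itself). The correct argument is on the other side of your dichotomy: $\gamma_k = s_{j_1}\cdots s_{j_{k-1}}(\alpha_{j_k})$ has support contained in $\{\alpha_{j_1},\dots,\alpha_{j_k}\}$, whereas $\theta' = \sum_t \alpha_t$ has full support, so $\gamma_k = \theta'$ forces $k=n$. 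With this fix your reduction goes through, and one also sees immediately that $\theta'$ is a root (it lies in the $W$-orbit of $\alpha_{j_n}$), resolving your hesitation on that point.

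Your worry that uniqueness is the hard part, possibly requiring a type-by-type check at branch nodes, is misplaced. Once you peel off $s_{j_1}$, writing $\beta = s_{j_2}\cdots s_{j_{n-1}}(\alpha_m)$, the equation $s_{j_1}(\beta) = \theta'$ with $\beta$ supported on $S\setminus\{\alpha_{j_1}\}$ forces $\langle \theta',\alpha_{j_1}\rangle = 1$ (compare the coefficient of $\alpha_{j_1}$), and then $\beta = \sum_{t\neq j_1}\alpha_t$ is again the sum of simple roots of the \emph{connected} subdiagram $S\setminus\{\alpha_{j_1}\}$. So the induction is perfectly self-similar and branch nodes cause no trouble. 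This is exactly how the paper proceeds: it introduces $J_1 = \{\alpha\in S : \langle \theta',\alpha\rangle = 1\}$, proves the preparatory identity $R^+(c^{-1})\cap S = J_1 \cap (S\setminus\{\alpha_i\})$, and runs both existence (simply-laced case, using $|J_1|\ge 2$ so that one can pick $\alpha_{j_1}\in J_1$ with $j_1\neq i$) and uniqueness by uniform induction on rank. The long-root obstruction is precisely your length argument ($\theta'$ is short in every non-simply-laced type). The only place the paper does fall back on a type-by-type check is \emph{existence} in types $B_n$, $C_n$, $F_4$, $G_2$, where $|J_1|=1$ and the simply-laced induction does not directly apply.
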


The following is proved (see Proposition \ref{prop4.0}):

\begin{proposition}\label{prop2}
Let $\mathcal{L}$ be a $B$-linearized line bundle on $Z(w,\,\underline{i})$ such that $H^0(Z(w,\underline{i}),\mathcal{L})\neq 0.$ Then the $B$-module $H^0(Z(w,\underline{i}),\mathcal{L})$ has a unique $B$-stable line $L_{\mu}$ of weight $\mu.$ Moreover, any weight  $\nu$ of this module satisfies $\nu\ge \mu.$
\end{proposition}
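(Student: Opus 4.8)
The plan is to exploit the standard inductive/fibration structure of the Bott–Samelson–Demazure–Hansen variety. Recall that $Z(w,\underline{i})$ carries a natural action of $B$, and there is a $B$-equivariant projection $\psi\colon Z(w,\underline{i})\longrightarrow Z(w',\underline{i}')$ (forgetting the last factor) whose fibers are isomorphic to $\mathbb{P}^1$, where $\underline{i}'=(i_1,\dots,i_{r-1})$; in fact $Z(w,\underline{i})$ is a $\mathbb{P}^1$-bundle of the form $\mathbb{P}(\mathcal{E})$ associated to a rank-two bundle pulled back along $\psi$ from the $P_{i_r}$-structure. First I would set up the induction on $r=\ell(w)$, the base case $r=0$ (a point) being trivial, and $r=1$ ($Z=\mathbb{P}^1=P_{i_1}/B$) following from the representation theory of $SL_2$: $H^0(\mathbb{P}^1,\mathcal{L})$ for an effective $B$-linearized $\mathcal{L}$ is a direct sum of weight spaces whose weights form a string $\mu,\mu+\alpha_{i_1},\dots$, with the lowest one $\mu$ spanning the unique $B$-stable (equivalently $U$-fixed) line.

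For the inductive step I would push forward along $\psi$. Writing $\mathcal{L}$ as a line bundle on the $\mathbb{P}^1$-bundle $\psi$, the Leray/projection-formula computation gives $H^0(Z(w,\underline{i}),\mathcal{L})\cong H^0(Z(w',\underline{i}'),\psi_*\mathcal{L})$, and $\psi_*\mathcal{L}$ is (up to twist) a symmetric power of the rank-two bundle, hence a successive extension of line bundles $\mathcal{M}_0,\dots,\mathcal{M}_d$ on $Z(w',\underline{i}')$ that are themselves $B$-linearized and whose $H^0$'s, by the inductive hypothesis, each have a unique $B$-stable line with all weights bounded below by it. The key point is then to identify, among the several $B$-stable lines produced this way, the unique one that survives in $H^0(Z(w,\underline{i}),\mathcal{L})$: this is the line sitting in the bottom sub-line-bundle of the filtration (the one corresponding to sections vanishing to maximal order along the appropriate section of $\psi$), and one checks its weight is $\le$ every other weight appearing, using that the twisting weights along the $\mathbb{P}^1$-fiber form a string shifting in the direction $\alpha_{i_r}$ and that by induction the weights on the base are all $\ge$ the base-minimal weight. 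Concretely, uniqueness of the $B$-stable line is equivalent to the statement $\dim H^0(Z,\mathcal{L})^{U}=1$, and I would verify this by showing the $U$-invariants of the extension inject into the $U$-invariants of the bottom piece.

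An alternative, perhaps cleaner, route is to invoke global generation / a Frobenius-splitting or standard-monomial style argument: one knows (e.g. from \cite{LLM}, \cite{LT}) that effective $B$-linearized line bundles on $Z(w,\underline{i})$ have a basis of ``standard monomials'' indexed by combinatorial data, and the weights of these monomials form a single lattice cone shifted by a minimal weight $\mu$; the unique lowest monomial spans the $B$-stable line. I would likely present the fibration induction as the main argument and remark on the standard-monomial viewpoint. The main obstacle I anticipate is the bookkeeping in the inductive step: one must show that the \emph{several} candidate $B$-stable lines coming from the pieces $\mathcal{M}_0,\dots,\mathcal{M}_d$ collapse to exactly one in the total space — equivalently that $H^1$ of all the sub-line-bundles in the filtration that could contribute extra $U$-invariants either vanishes or carries no $U$-invariant mapping in — and to pin down that the surviving weight is genuinely the minimum, which requires comparing the fiber-direction shift $\alpha_{i_r}$ against the base weights coordinate by coordinate. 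Once that comparison is in place, the ``moreover'' clause ($\nu\ge\mu$ for all weights $\nu$) falls out of the same filtration together with the inductive bound on the base.
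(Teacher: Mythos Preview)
Your inductive fibration plan is a plausible line of attack, but it is \emph{not} the route the paper takes, and the obstacle you yourself flag is a real one that your proposal does not actually overcome. The paper's argument (suggested by a referee) bypasses the filtration bookkeeping entirely: it uses the fact that the point $z=[\dot s_{i_1},\dots,\dot s_{i_r}]$ has an open dense $U$-orbit $U\cdot z$ in $Z(w,\underline{i})$. Restriction of sections gives an injection $H^0(Z,\mathcal{L})^{U}\hookrightarrow H^0(U\cdot z,\mathcal{L})^{U}$, and any $U$-invariant section on $U\cdot z$ is determined by its value at $z$; hence $\dim H^0(Z,\mathcal{L})^{U}\le 1$. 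The weight bound $\nu\ge\mu$ then follows because the $U$-submodule generated by any weight vector is $B$-stable and must contain $L_\mu$. This is a two-line proof using nothing beyond the existence of the big cell.

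By contrast, in your approach the key step ``$U$-invariants of the extension inject into the $U$-invariants of the bottom piece'' is exactly the statement that still needs proof, and the inductive hypothesis on the graded pieces $H^0(\mathcal{M}_j)$ does not supply it: knowing each piece has a unique $B$-stable line says nothing a priori about $B$-stable lines in a nontrivial extension (a split extension would immediately give two). One can push this through by a careful weight/$\alpha_{i_r}$-string analysis, but you have not done so, and the end result would be substantially longer than the open-orbit argument. The standard-monomial remark you make at the end is closer in spirit to a genuine alternative, but as stated it is also just an appeal to an external source rather than an argument. In short: the paper's proof is both different and strictly simpler; your plan is not wrong in outline, but the crucial uniqueness step is left as precisely the hard part it is.
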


The following is proved (see Proposition \ref{prop6.4}).

\begin{proposition}\label{Prop1.5}
Assume that $G\,=\,{\rm SL}(4,\mathbb{C}).$ Let $w_{0}=s_{i_1}s_{i_2}\cdots s_{i_6}$ be a reduced expression and $\underline{i}=(i_1,i_2,\ldots,i_6).$ Then
the anti-canonical line bundle on $Z(w_{0},\,\underline{i})$ is globally generated.
\end{proposition}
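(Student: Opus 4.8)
The plan is to reduce Proposition \ref{Prop1.5} to a finite, completely explicit check governed by Proposition \ref{prop1}. Since $G = \mathrm{SL}(4,\mathbb{C})$ has type $A_3$, the longest element $w_0$ has length $6$, so $\underline{i}$ is one of the finitely many reduced words for $w_0$; up to the diagram automorphism of $A_3$ there are only a handful of them (the number of reduced words for $w_0$ in $A_3$ is $16$). For each such $\underline{i}$, Proposition \ref{prop1} gives the exponents $(m_1,\dots,m_6)$ in $K_{Z(w_0,\underline{i})}^{-1} = \mathcal{O}_1(m_1)\otimes\cdots\otimes\mathcal{O}_6(m_6)$ purely in terms of the Cartan integers of $A_3$: for each $j$ one looks at the next occurrence $k>j$ of the index $i_j$ (if any) and computes $m_j = \langle \sum_{l=j}^{k-1}\alpha_{i_l},\,\alpha_{i_j}\rangle$, or $m_j = \langle \sum_{l=j}^{r}\alpha_{i_l},\,\alpha_{i_j}\rangle$ if there is no such $k$. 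Because every index in $\{1,2,3\}$ appears exactly twice in a reduced word for $w_0$ in $A_3$, the first three $m_j$ are of the ``bounded by the next repetition'' type and the last three are of the ``tail'' type; in all cases one checks $m_j \ge 1$, i.e. each $\mathcal{O}_j(m_j)$ is relatively ample along the $j$-th $\mathbb{P}^1$-fibration.

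The second step is to upgrade nonnegativity of the exponents to global generation of the tensor product. Here I would invoke the characterization of globally generated line bundles on Bott-Samelson-Demazure-Hansen varieties from \cite{LT} (the same source already used in the excerpt to write down $K^{-1}$): a line bundle $\mathcal{O}_1(a_1)\otimes\cdots\otimes\mathcal{O}_r(a_r)$ is globally generated precisely when the integers $a_j$ satisfy a system of inequalities recorded inductively along the tower $Z(w,\underline{i}) \to Z(s_{i_1}\cdots s_{i_{r-1}},\underline{i}') \to \cdots$, each step being a $\mathbb{P}^1$-bundle. Concretely one uses the pushforward/section-extension argument: if the restriction of $\mathcal{L}$ to each fiber $\mathbb{P}^1$ has nonnegative degree and the pushforward to the previous stage is globally generated, then $\mathcal{L}$ is globally generated. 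So the induction reduces to verifying, stage by stage, that the partial sums defining the fiber degrees stay $\ge 0$ — and with the explicit $(m_1,\dots,m_6)$ from the first step this is a bounded hand computation for each of the finitely many $\underline{i}$.

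The main obstacle — and the only place where the hypothesis ``$w_0$ rather than an arbitrary $w$'' really matters — is that on a general $Z(w,\underline{i})$ the anti-canonical exponents $m_j$ computed from Proposition \ref{prop1} need not all be nonnegative, so the $\mathbb{P}^1$-fiber degrees can be negative at some stage and the inductive section-lifting breaks down; indeed Proposition \ref{Prop1.5} would be false for $A_2$ (cf. the $A_2$ exclusion in Theorem \ref{thm1.3}) and can fail for non-longest $w$. What saves the $A_3$ longest-element case is the combinatorial fact that in any reduced word for $w_0$ in $A_3$ each of the three simple reflections occurs exactly twice, which forces the ``gap'' sums $\sum_{l=j}^{k-1}\alpha_{i_l}$ to involve a controlled set of simple roots whose pairing with $\alpha_{i_j}$ is $\ge -1$ at worst and, after combining with the tail contributions, yields $m_j\ge 1$ throughout. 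So the proof is: (i) enumerate the reduced words for $w_0$ in $A_3$ (using the braid relations, or citing the known count); (ii) apply Proposition \ref{prop1} to read off $(m_1,\dots,m_6)$ in each case and observe all entries are $\ge 1$; (iii) feed these into the \cite{LT} global-generation criterion via the $\mathbb{P}^1$-tower induction. I expect step (ii) to be the crux — it is where one must be careful that the ``next occurrence'' bookkeeping in Proposition \ref{prop1} is applied correctly — while steps (i) and (iii) are routine.
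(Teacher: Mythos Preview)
Your overall strategy---enumerate the reduced words for $w_0$ in $A_3$, compute the exponents $(m_1,\dots,m_6)$ via Proposition~\ref{prop1}, and invoke the Lauritzen--Thomsen criterion---is exactly the paper's approach. But the execution contains several factual errors that would derail step~(ii), which you correctly identify as the crux.

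First, it is \emph{not} true that every simple reflection occurs exactly twice in a reduced word for $w_0$ in $A_3$. For instance $w_0=s_1s_2s_1s_3s_2s_1$ has $s_1$ three times and $s_3$ once; the multiplicities are not invariants of $w_0$ (already in $A_2$ one has $s_1s_2s_1=s_2s_1s_2$). So your ``first three bounded, last three tail'' bookkeeping and the combinatorial argument built on it collapse. Second, the exponents are \emph{not} all $\ge 1$: for $\underline{i}=(1,2,1,3,2,1)$ one computes $m_2=\langle \alpha_2+\alpha_1+\alpha_3,\alpha_2\rangle=0$, and zeros occur in most of the cases. What one actually checks is $m_j\ge 0$, which by \cite[Corollary~3.3]{LT} is already equivalent to global generation of $\mathcal{O}_1(m_1)\otimes\cdots\otimes\mathcal{O}_r(m_r)$; there is no further ``inductive section-lifting along the $\mathbb{P}^1$-tower'' to carry out, so your step~(iii) is superfluous. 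Third, the $A_2$ aside is backwards: for $A_2$ the anti-canonical bundle on $Z(w_0,\underline{i})$ is in fact ample (the exclusion of $A_2$ in Theorem~\ref{thm1.3} is precisely because $w_0$ is \emph{not} Coxeter there yet $Z(w_0,\underline{i})$ \emph{is} Fano).

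The fix is simply to do the honest case check. The paper uses that BSDH varieties for words differing by commuting relations are isomorphic, reducing to nine expressions (three ending in each of $s_1,s_2,s_3$), and verifies $m_j\ge 0$ in each; you could equally well run through all sixteen words. Either way there is no uniform combinatorial shortcut of the kind you sketch---it really is a finite verification.
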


\begin{example}
	Let $G$ be of type $A_{n-1}.$ Consider the reduced expression $$w_{0}\,=\,s_{1}(s_{2}s_{1})(s_{3}s_{2}s_{1})\cdots (s_{n-1}s_{n-2}\cdots s_{1}).$$ Let $\underline{i}\,=\,(i_{1},\,\cdots ,\,i_{N})$ be the tuple associated to the above reduced expression of
	$w_{0},$ where $N\,=\,{{n}\choose{2}}.$
	Then the anti-canonical line bundle $K_{Z(w_0,\underline{i})}^{-1}$ on $Z(w_{0},\,\underline{i})$ is globally generated (see Example \ref{prop6.1}). 
\end{example}

Note that the construction of the Bott-Samelson-Demazure-Hansen-variety $Z(w,\,\underline{i})$ depends not only on $w$ but also it 
depends on the choice of the reduced expression $\underline{i}$ of $w.$ Two Bott-Samelson-Demazure-Hansen-varieties corresponding to 
same $w$ but different reduced expressions are not necessarily isomorphic (see \cite{CKP}). We illustrate this fact by showing that 
there are two tuples $\underline{i}$ and $\underline{i}'$ associated to two reduced expressions of $w_{0}$ in $SL(5,\mathbb{C})$ such that the anti-canonical 
line bundle of $Z(w_0,\,\underline{i})$ is globally generated and the anti-canonical line bundle of $Z(w_0,\,\underline{i}')$ is not 
globally generated (see Remark \ref{rm6.7}).
	
All the results of our article work over an algebraically closed field of 
arbitrary characteristics except all the results in Section \ref{subsec3} and Lemma \ref{lem 2.2}, Lemma \ref{lem4.4}, Proposition \ref{prop4.5} and Proposition \ref{prop4.6} in Section \ref{Sec4}.
	
The organization of the article is as follows. In Section 2, we introduce some notation and preliminaries on algebraic groups, Lie 
algebras, and Bott-Samelson-Demazure-Hansen varieties. In Section 3, we recall some results on cohomology of line bundles on Schubert 
varieties. Then we study cohomology of the anti-canonical line bundle on Bott-Samelson-Demazure-Hansen variety and give an inductive 
method to compute all the cohomology groups of the anti-canonical line bundle on Bott-Samelson-Demazure-Hansen variety. In Section 
4, we prove Proposition \ref{prop2}. In Section 5, we prove Proposition \ref{prop1}, Theorem \ref{Thm1.2} and Theorem \ref{thm1.3}. In Section 6, we prove Proposition \ref{Prop1.5}.
	
\section{Preliminaries}\label{Sec2}
	
	In this section some notation and preliminaries are set up. For details on 
	algebraic groups, Lie algebras and Bott-Samelson-Demazure-Hansen varieties \cite{BK05}, \cite{Hum1}, \cite{Hum2}, \cite{Jan} 
	are referred.
	
	Let $\mathfrak{g}$ be the Lie algebra of $G$. 
	Let $\mathfrak{b}\subset \mathfrak{g}$ and $\mathfrak{h}\subset \mathfrak{b}$ be the Lie algebras of $B$ and
	$T$ respectively. The group of all characters of $T$ is denoted by $X(T)$. Therefore,
	$$X(T)\otimes \mathbb{R}\,=\,{\rm Hom}_{\mathbb{R}}(\mathfrak{h}_{\mathbb{R}},\, \mathbb{R})$$
	(the dual of the real form of $\mathfrak{h}$). The positive definite 
	$W$-invariant form on ${\rm Hom}_{\mathbb{R}}(\mathfrak{h}_{\mathbb{R}},\, \mathbb{R})$ 
	induced by the Killing form on $\mathfrak{g}$ is denoted by $(-,\,-)$. 
	For any $\mu\,\in\, X(T)\otimes \mathbb{R}$ and $\alpha\,\in\, R$, denote
	\begin{equation}\label{eq2.1}
	\langle \mu,\, \alpha \rangle \,=\, \frac{2(\mu,\alpha)}{(\alpha,\alpha)}\, .
	\end{equation}
	Take a Chevalley basis $x_{\alpha},\, y_{\alpha}, \,\alpha \,\in\, R,\,$ $h_{\alpha_{i}},\, \alpha_{i}\,\in \,S$ 
	of $\mathfrak{g}.$ For any $\alpha\,\in\, R,$ let $U_{\alpha}$ be the root group associated to $\alpha$;
	for $a\,\in\, \mathbb{C}$, let $u_{\alpha}(a)\, \in\, U_{\alpha}$
	be the corresponding element. Denote by $X(T)^+$ the set of dominant characters of 
	$T$ with respect to $B^{+}$. Let $\rho$ be the half sum of all 
	positive roots of $G$ with respect to $T$ and $B^{+}.$ Note that
$\rho$ is the sum of all fundamental weights of $G$ (see \cite[p. 70, Chapter III, Section 13.3, 
Lemma A]{Hum1}). For any simple root $\alpha_{i}$, we denote the fundamental weight
	corresponding to $\alpha_{i}$ by $\omega_{i}.$

	Let ``$\le$'' be the Bruhat order on $W$(see \cite[p. 118, Section 5.9]{Hum3}). The length of any $w\,\in\, W$ is denoted by $\ell(w)$. 
	
	For a simple root $\alpha \,\in\, S,$ let $n_{\alpha}\,\in\, N_{G}(T)$ be a representative of $s_{\alpha}.$ The
	unique minimal parabolic subgroup of $G$ containing $B$ and $n_{\alpha}$ is denoted by $P_{\alpha}.$
	
	We recall that the Bott-Samelson-Demazure-Hansen variety (BSDH-variety for
short) corresponding to the tuple $\underline{i}\,=\,(i_{1},\,i_{2},\,\cdots,\,i_{r})$ associated to a reduced expression $w\,=\,s_{i_{1}}s_{i_{2}}\cdots s_{i_{r}}$ is defined by $$Z(w,\,\underline{i})\,=\,
	\frac{P_{\alpha_{i_{1}}}\times P_{\alpha_{i_{2}}}\times\cdots \times 
		P_{\alpha_{i_{r}}}}{B\times B\times \cdots \times B},$$
where $\underbrace{B\times B\times \cdots \times B}_{( \rm r-times)}$  acts on $P_{\alpha_{i_{1}}}\times P_{\alpha_{i_{2}}}\times\cdots\times P_{\alpha_{i_{r}}}$
	as follows:
	$$(p_{1},\, p_{2},\, \cdots ,\,p_{r})(b_{1}, \,b_{2},\,\cdots,\, b_{r})\,=\,(p_{1}\cdot b_{1},\, b_{1}^{-1} \cdot p_{2}\cdot b_{2},\,
	\cdots,\, b_{r-1}^{-1}\cdot p_{r}\cdot b_{r})$$
	for all $p_{j}\,\in\, P_{\alpha_{i_{j}}}$, $b_{j}\,\in\, B$ (see \cite[p.~73, Definition 1]{De},
	\cite[Definition 2.2.1, p.64]{BK05}). The equivalence class of $(p_{1},\, \cdots,\,p_{r})$ is denoted by $[p_{1},\,
	\cdots ,\,p_{r}].$
	
	We note that for the tuple $\underline{i}$ associated to each reduced expression of $w,$ the BSDH-variety $Z(w,\,\underline{i})$ is a smooth projective
	variety. The BSDH-varieties are equipped with a $B$-equivariant morphism
	\begin{equation}\label{e1}
\phi_{(w,\underline{i})}\,:\, Z(w,\,\underline{i})\,\longrightarrow\, G/B
\end{equation}
defined by $[p_{1},\,\cdots ,\, p_{r}]\, \longmapsto\, p_{1}\cdots p_{r}B.$ Then
$\phi_{(w,\underline{i})}$ is the natural birational surjective morphism from $Z(w,\, \underline{i})$ to
$X(w)$ (see \eqref{sv}). Moreover, $\phi_{(w,\underline{i})}$ is a rational resolution.
	
	For $\underline{i}'\,=\,(i_{1},\,i_{2},\,\cdots,\, i_{r-1})$, let
	$$f_{r}\,:\,Z(w,\, \underline{i})\,\longrightarrow\, Z(ws_{i_{r}}, \,\underline{i}')$$ be the morphism induced by the projection
	$$P_{\alpha_{i_{1}}}\times P_{\alpha_{i_{2}}}\times\cdots\times P_{\alpha_{i_{r}}}\,\longrightarrow \,
	P_{\alpha_{i_{1}}}\times P_{\alpha_{i_{2}}}\times \cdots \times P_{\alpha_{i_{r-1}}}.$$
	This map $f_{r}$ is a $P_{\alpha_{i_{r}}}/B\,\simeq\, \mathbb{P}^1$-fibration. As a consequence, we have ${\rm Pic}(Z(w,\,\underline{i}))\,\simeq\,\mathbb{Z}^{r}.$
	
	For $1\,\le\, j\,\le \,r,$ define $w_{j}\,:=\,s_{i_{1}}\cdots s_{i_{j}}$ and $\underline{i}_{j}\,:=\,(i_{1},\, \cdots,\, i_{j}).$ 
	Then for every $1\,\le\, j\,\le\, r-1,$ we have the projection map
	$$f_{j}\,:\,Z(w,\,\underline{i})\,\longrightarrow\, Z(w_{j},\,\underline{i}_{j})$$
	defined by $[p_{1},\, \cdots ,\,p_{r}]\,\longmapsto\, [p_{1},\, \cdots ,\, p_{j}].$
	For any $1\,\le\, j\,\le\, r,$ there is a $B$-invariant divisor
	$$X_{j}\,:=\, \{[p_{1},\, \cdots ,\, p_{r}]\,\,\big\vert\,\, p_{j}\,=\,e\}\, \subset\, Z(w,\,\underline{i}),$$
	where $e\, \in\, P_{\alpha_{i_{j}}}$ is the identity element.
	Furthermore, the union $\bigcup_{j=1}^r X_{j}$ is a normal crossing divisor.
	The line bundles ${\mathcal O}_{Z(w,\,\underline{i})} (X_{j})$, $1\,\le\, j\,\le \,r$, form a basis of
	${\rm Pic}(Z(w,\,\underline{i}))$ (see \cite[p.~465, Subsection 3.2]{LT}).
	
	For a $B$-module $V,$ the associated vector bundle on $G/B$ is denoted by $\mathcal{L}(V)$, so 
	$$\mathcal{L}(V)\,=\,G\times_{B}V\,=\,G\times V/\sim$$
	where the action of $B$ is given by $(g,v)\,\sim \,(gb,b^{-1}v)$ for $b\,\in\, B,\,g\,\in\, G$ and $v\,\in\, V.$
	Let $\mathcal{L}(w, V)$ denote the restriction to $X(w)$ of the homogeneous vector bundle $\mathcal{L}(V)
	\,\longrightarrow\, G/B$. The pullback of $\mathcal{L}(w, V)$ to $Z(w,\, \underline{i})$,
	via $\phi_{(w,\underline{i})}$ in \eqref{e1}, is denoted by $\mathcal{O}_{r}(V).$ Since
	this vector bundle $\mathcal{O}_{r}(V)$ on $Z(w,\, \underline{i})$ is the pullback of the homogeneous vector bundle
	$\mathcal{L}(w, V)$, we conclude 
	that the cohomology module
	$$H^j(Z(w,\,\underline{i}),\, \mathcal{O}_{r}(V))\,\simeq\, H^j(X(w), \,\mathcal{L}(w, V))$$
	is independent of the choice of the reduced expression $\underline{i}$, for every $j\,\ge\, 0$ (see \cite[Theorem 3.3.4(b)]{BK05}).
	In view of this, we denote $H^j(Z(w,\, \underline{i}),\, \mathcal{O}_{r}(V))$ by $H^j(w,\, V).$ In particular, if $\lambda$ is
	character of $B,$ then we denote the cohomology groups $H^j(Z(w,\,\underline{i}),\, \mathcal{O}_{r}({\lambda}))$ by
	$H^j(w,\, \lambda).$ For any $1\, \leq\, j\, \leq\, r$, let
	$\mathcal{O}_{j}(\lambda)\,=\,f_{j}^{*}\,\phi^*_{(w_{j},\underline{i}_{j})}\mathcal{L}(w_{j}, \lambda)$ be the pulled back
	line bundle on $Z(w,\,\underline{i})$. Define $\mathcal{O}_{j}(1)\,:=\,\mathcal{O}_{j}(\omega_{i_{j}}).$ Then
	$\{ \mathcal{O}_{j}(1)\}_{ 1\le j\le r}$ is a basis of ${\rm Pic}(Z(w,\,\underline{i})).$
	Take $(m_{1},\,\cdots,\,m_{r})\,\in\, \mathbb{Z}^{r}.$ The line bundle $\mathcal{O}_{1}(m_{1})\otimes \cdots \otimes\mathcal{O}_{r}(m_{r})$
	is very ample (respectively, globally generated) if and only if $m_{j}\,>\,0$ (respectively, $m_{j}\,\ge\, 0$) for all
	$1\,\le\, j\,\le\, r$ (see \cite[p.~464--465, Theorem 3.1, Corollary 3.3]{LT}).

	Now we have two sets of basis for ${\rm Pic}(Z(w,\,\underline{i})).$ These two sets of basis are related by the following transformation
	rule:
	
	\begin{lemma}[{\cite[Section 4.2, Proposition 1]{De}}]\label{lem2.1}
		For a character $\lambda$ of $B,$ and for any integer $1\,\le\, k\,\le\, r,$ there is an isomorphism of line bundles 
		on $Z(w,\,\underline{i})$
		$$\mathcal{O}_{k}(\lambda)\,\simeq \,\mathcal{O}_{Z(w,\underline{i})}(\sum\limits_{l=1}^{k}r_{kl}(\lambda)X_{l})\, ,$$
		where the coefficients are
		$$r_{kl}(\lambda)\,=\,\langle -\lambda,\, s_{i_{k}}\cdots s_{i_{l}}(\alpha_{i_{l}}) \rangle
		\,=\, \langle \lambda,\, s_{i_{k}}\cdots s_{i_{l+1}}(\alpha_{i_{l}}) \rangle.$$
	\end{lemma}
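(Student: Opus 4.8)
The plan is to argue by induction on $k$, at each stage peeling off the divisor $X_k$ by means of the last $\mathbb{P}^1$-fibration in the tower defining $Z(w_k,\underline{i}_k)$. For $1\le j\le k$ set $L_j(\lambda):=\phi^{*}_{(w_j,\underline{i}_j)}\mathcal{L}(w_j,\lambda)$ on $Z(w_j,\underline{i}_j)$, let $\widetilde{X}_j\subset Z(w_j,\underline{i}_j)$ be the divisor $\{[p_1,\dots,p_j]\,:\,p_j=e\}$, and let $g_j\colon Z(w_j,\underline{i}_j)\to Z(w_{j-1},\underline{i}_{j-1})$ be the projection forgetting the last coordinate. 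Then $f_j^{*}L_j(\lambda)=\mathcal{O}_j(\lambda)$, $f_j^{*}\mathcal{O}_{Z(w_j,\underline{i}_j)}(\widetilde{X}_j)=\mathcal{O}_{Z(w,\underline{i})}(X_j)$, and $g_j\circ f_j=f_{j-1}$ on $Z(w,\underline{i})$. I would first prove, on $Z(w_k,\underline{i}_k)$, the recursion
$$L_k(\lambda)\;\cong\;\mathcal{O}_{Z(w_k,\underline{i}_k)}\!\big(\langle\lambda,\alpha_{i_k}\rangle\,\widetilde{X}_k\big)\otimes g_k^{*}L_{k-1}(s_{i_k}\lambda).$$
Pulling this back along $f_k$ and applying the inductive hypothesis to $L_{k-1}(s_{i_k}\lambda)$ gives the assertion, provided $r_{(k-1),l}(s_{i_k}\lambda)=r_{k,l}(\lambda)$ for $1\le l\le k-1$; but this follows at once from $W$-invariance of $\langle\,\cdot,\cdot\,\rangle$ together with $r_{kl}(\lambda)=\langle\lambda,\,s_{i_k}\cdots s_{i_{l+1}}(\alpha_{i_l})\rangle$. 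The base case $k=1$ is immediate, since $Z(w_1,\underline{i}_1)\cong\mathbb{P}^1$, the point $\widetilde{X}_1$ gives $\mathcal{O}_{\mathbb{P}^1}(1)$, and $L_1(\lambda)$ has degree $\langle\lambda,\alpha_{i_1}\rangle$.

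To prove the recursion I would use the structure of $g_k$. The map $[p_1,\dots,p_k]\mapsto\big([p_1,\dots,p_{k-1}],\,[\,p_1\cdots p_{k-1},\,p_kB\,]\big)$ identifies $Z(w_k,\underline{i}_k)$ with the fibre product of $\phi_{(w_{k-1},\underline{i}_{k-1})}\colon Z(w_{k-1},\underline{i}_{k-1})\to G/B$ and the homogeneous $\mathbb{P}^1$-bundle $\mathbb{P}_k:=G\times_B(P_{\alpha_{i_k}}/B)\to G/B$; under it, $\widetilde{X}_k$ is the pullback of the section $\sigma_0\colon gB\mapsto[g,eB]$ of $\mathbb{P}_k$, and $g_k$ restricts to an isomorphism $\widetilde{X}_k\cong Z(w_{k-1},\underline{i}_{k-1})$ whose inverse is the section $s_k\colon[p_1,\dots,p_{k-1}]\mapsto[p_1,\dots,p_{k-1},e]$. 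Since $P_{\alpha_{i_k}}/B$ is the projectivization of a two-dimensional $P_{\alpha_{i_k}}$-module, $g_k$ is the projectivization of a rank-two bundle; hence every line bundle $L$ on $Z(w_k,\underline{i}_k)$ is uniquely of the form $\mathcal{O}(d\,\widetilde{X}_k)\otimes g_k^{*}N$ with $d$ the degree of $L$ on a fibre of $g_k$ and $N=s_k^{*}\big(L\otimes\mathcal{O}(-d\,\widetilde{X}_k)\big)$.

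It remains to perform two computations. First, $\phi_{(w_k,\underline{i}_k)}$ carries each fibre of $g_k$ isomorphically onto a $G$-translate of the Schubert curve $P_{\alpha_{i_k}}/B\subset G/B$, so by $G$-equivariance of $\mathcal{L}(w_k,\lambda)$ the restriction of $L_k(\lambda)$ to a fibre has degree $\langle\lambda,\alpha_{i_k}\rangle$ (the normalization for which $L_j(\omega_{i_j})$ has degree one on the fibres of $g_j$). Second, $s_k^{*}\mathcal{O}(\widetilde{X}_k)$ is the normal bundle of $\widetilde{X}_k$, which by the fibre-product description is $\phi^{*}_{(w_{k-1},\underline{i}_{k-1})}$ of the normal bundle of $\sigma_0(G/B)$ in $\mathbb{P}_k$; the latter is the $G$-homogeneous line bundle on $G/B$ whose fibre over $eB$ is the vertical tangent space $T_{eB}(P_{\alpha_{i_k}}/B)\cong\mathfrak{p}_{\alpha_{i_k}}/\mathfrak{b}\cong\mathfrak{g}_{\alpha_{i_k}}$, i.e.\ the bundle $\mathcal{L}(\mathbb{C}_{\alpha_{i_k}})$; hence $s_k^{*}\mathcal{O}(\widetilde{X}_k)=L_{k-1}(\alpha_{i_k})$. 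Combining the two, $N=L_{k-1}\big(\lambda-\langle\lambda,\alpha_{i_k}\rangle\alpha_{i_k}\big)=L_{k-1}(s_{i_k}\lambda)$, which is the recursion. The step I expect to be the main obstacle is this second computation — pinning down the normal bundle of the canonical section $\widetilde{X}_k$ — together with keeping the various sign and normalization conventions (for $\mathcal{L}(\,\cdot\,)$, for $\langle\,\cdot,\cdot\,\rangle$, and for the divisors $X_j$) mutually consistent; once these are fixed, the rest is formal, as all the spaces and line bundles in question are pulled back from data on $G/B$ along the maps $\phi_{(w_j,\underline{i}_j)}$.
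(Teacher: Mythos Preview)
The paper does not supply its own proof of this lemma; it is quoted verbatim from Demazure's paper \cite[Section 4.2, Proposition 1]{De}, so there is nothing in the present article to compare your argument against. Your inductive proof via the tower of $\mathbb{P}^1$-fibrations $g_k$ is correct and is in fact the standard route (and essentially Demazure's original argument): peel off the last factor, identify the fibrewise degree as $\langle\lambda,\alpha_{i_k}\rangle$, identify the normal bundle of the base section with $L_{k-1}(\alpha_{i_k})$, and observe that $r_{(k-1),l}(s_{i_k}\lambda)=r_{k,l}(\lambda)$ by $W$-invariance of the pairing.

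One small point: the sentence ``$P_{\alpha_{i_k}}/B$ is the projectivization of a two-dimensional $P_{\alpha_{i_k}}$-module'' is slightly delicate (the unipotent radical must act trivially, so one really wants a two-dimensional $L_{\alpha_{i_k}}$-module), but you do not actually need this. All that is required for the decomposition $L\cong\mathcal{O}(d\,\widetilde{X}_k)\otimes g_k^{*}N$ is that $g_k$ is a Zariski-locally trivial $\mathbb{P}^1$-fibration admitting the section $s_k$; this already forces $\mathrm{Pic}\big(Z(w_k,\underline{i}_k)\big)=g_k^{*}\mathrm{Pic}\big(Z(w_{k-1},\underline{i}_{k-1})\big)\oplus\mathbb{Z}\cdot[\widetilde{X}_k]$, which is all you use. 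The rest of your sketch---the fibre-product description, the normal-bundle computation $s_k^{*}\mathcal{O}(\widetilde{X}_k)\cong L_{k-1}(\alpha_{i_k})$ via $\mathfrak{p}_{\alpha_{i_k}}/\mathfrak{b}\cong\mathfrak{g}_{\alpha_{i_k}}$, and the resulting recursion $N=L_{k-1}(s_{i_k}\lambda)$---is accurate under the paper's conventions.
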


	\begin{corollary}\label{cor2.2}
		Let $w\,=\,s_{i_{1}}\cdots s_{i_{r}}$ be a reduced expression and $\underline{i}\,=\,(i_{1},\,\cdots,\, i_{r}).$ Then
		$$\mathcal{O}_{r}(\rho)\,=\,\mathcal{O}_{Z(w, \underline{i})}
		(\sum\limits_{l=1}^{r}\langle \rho,\, s_{i_{r}}\cdots s_{i_{l+1}}(\alpha_{i_{l}})\rangle X_{l}).$$
	\end{corollary}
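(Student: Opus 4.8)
The plan is to obtain this identity as an immediate specialization of Lemma \ref{lem2.1}. First I would check that the statement is well-posed: since $G$ is semisimple and simply connected, the character lattice $X(T)$ coincides with the full weight lattice, so every fundamental weight $\omega_i$ lies in $X(T)$; as $\rho$ is the sum of all the fundamental weights of $G$ (as recalled above), $\rho$ is a character of $T$, hence of $B$, and therefore $\mathcal{O}_{r}(\rho)$ is a well-defined line bundle on $Z(w,\underline{i})$.

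Next I would apply Lemma \ref{lem2.1} with $\lambda\,=\,\rho$ and $k\,=\,r$. That lemma yields
$$\mathcal{O}_{r}(\rho)\,\simeq\,\mathcal{O}_{Z(w,\underline{i})}\Bigl(\sum_{l=1}^{r} r_{rl}(\rho)\, X_{l}\Bigr),$$
and, using the second of the two equal expressions for the coefficient given there, $r_{rl}(\rho)\,=\,\langle \rho,\, s_{i_{r}}\cdots s_{i_{l+1}}(\alpha_{i_{l}}) \rangle$. Substituting this in gives precisely the asserted isomorphism, so there is nothing further to verify.

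I do not expect any genuine obstacle here: the corollary is Lemma \ref{lem2.1} read off at the top index $k\,=\,r$ with $\lambda\,=\,\rho$. The only point worth flagging is the use of simple connectedness of $G$ to guarantee $\rho\,\in\, X(T)$; in a non-simply-connected setting one would need to replace $\rho$ by $2\rho$ (or pass to the universal cover), but under the standing hypotheses of the paper this is automatic.
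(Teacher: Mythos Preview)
Your proposal is correct and follows exactly the paper's own approach: the paper's proof of this corollary consists of the single sentence ``This follows immediately from Lemma \ref{lem2.1},'' which is precisely what you do by specializing that lemma to $\lambda=\rho$ and $k=r$. Your additional remark on simple connectedness ensuring $\rho\in X(T)$ is a valid observation, though the paper does not make it explicit here since this is part of the standing hypotheses.
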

	
	\begin{proof}
		This follows immediately from Lemma \ref{lem2.1}.
	\end{proof}

The canonical line bundle $K_{Z(w,\, \underline{i})}$ on $Z(w,\, \underline{i})$ is isomorphic to
$\mathcal{O}_{Z(w,\, \underline{i})}(\sum\limits_{j=1}^{r}-X_{j}) \otimes \mathcal{O}_{r}(-\rho)$ (see \cite[Proof of Proposition 10]{MR85},
\cite[p.~67, Proposition 2.2.2]{BK05}). Consequently, the anti-canonical line bundle $K_{Z(w,\underline{i})}^{-1}$ is isomorphic to
$\mathcal{O}_{Z(w, \underline{i})}(\sum\limits_{j=1}^{r}X_{j}) \otimes \mathcal{O}_{r}(\rho).$ 
	
\begin{corollary}\label{cor2.3}
Let $w\,=\,s_{i_{1}}\cdots s_{i_{r}}$ be a reduced expression and $\underline{i}\,=\,(i_{1},\,\cdots,\,i_{r}).$ Then we have
$K_{Z(w,\underline{i})}^{-1}\,=\,\mathcal{O}_{Z(w, \underline{i})}(\sum\limits_{l=1}^{r}(1+\langle \rho,\,
s_{i_{r}}\cdots s_{i_{l+1}}(\alpha_{i_{l}})\rangle )X_{l}).$ In particular, $K_{Z(w,\underline{i})}^{-1}$ is effective (i.e., $K_{Z(w,
\underline{i})}^{-1}$ is associated to an effective divisor) and
$K_{Z(w,\underline{i})}^{-1}$ is big.
\end{corollary}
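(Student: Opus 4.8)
The plan is to combine the description of the anti-canonical bundle recorded immediately before the statement with Corollary~\ref{cor2.2}, and then to extract effectiveness and bigness from the positivity of the line bundle $\mathcal{O}_{r}(\rho)$ attached to the regular dominant weight $\rho$. The displayed formula is itself purely formal: as noted above, $K_{Z(w,\underline{i})}^{-1}\simeq\mathcal{O}_{Z(w,\underline{i})}\big(\sum_{j=1}^{r}X_{j}\big)\otimes\mathcal{O}_{r}(\rho)$, while Corollary~\ref{cor2.2} gives $\mathcal{O}_{r}(\rho)=\mathcal{O}_{Z(w,\underline{i})}\big(\sum_{l=1}^{r}\langle\rho,\,s_{i_{r}}\cdots s_{i_{l+1}}(\alpha_{i_{l}})\rangle X_{l}\big)$; adding these two divisors coefficient-by-coefficient in the basis $X_{1},\ldots,X_{r}$ of ${\rm Pic}(Z(w,\underline{i}))$ yields the asserted expression for $K_{Z(w,\underline{i})}^{-1}$.

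For effectiveness I would not try to verify directly that each coefficient $1+\langle\rho,\,s_{i_{r}}\cdots s_{i_{l+1}}(\alpha_{i_{l}})\rangle$ is non-negative; instead I would present $K_{Z(w,\underline{i})}^{-1}$ as a tensor product of two line bundles each carrying a non-zero global section. The divisor $\sum_{j=1}^{r}X_{j}$ is effective by construction, so $\mathcal{O}_{Z(w,\underline{i})}\big(\sum_{j}X_{j}\big)$ has a non-zero section. For the factor $\mathcal{O}_{r}(\rho)=\phi_{(w,\underline{i})}^{*}\mathcal{L}(w,\rho)$, note that $\mathcal{L}(w,\rho)$ is the restriction to $X(w)$ of the line bundle $\mathcal{L}(\rho)$ on $G/B$, which is globally generated since $\rho$ is dominant; hence $\mathcal{L}(w,\rho)$ is globally generated on the non-empty variety $X(w)$, so $H^{0}(X(w),\mathcal{L}(w,\rho))\neq 0$, and therefore $H^{0}(Z(w,\underline{i}),\mathcal{O}_{r}(\rho))\neq 0$ by the identification $H^{0}(Z(w,\underline{i}),\mathcal{O}_{r}(\rho))\simeq H^{0}(X(w),\mathcal{L}(w,\rho))$ recorded in Section~\ref{Sec2}. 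A tensor product of line bundles each with a non-zero section again has a non-zero section, so $K_{Z(w,\underline{i})}^{-1}$ is effective.

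For bigness I would use that $\rho$ is regular dominant (being the sum of all fundamental weights), so $\mathcal{L}(\rho)$ is ample on $G/B$ and its restriction $\mathcal{L}(w,\rho)$ is ample on $X(w)$. Since $\dim X(w)=\ell(w)=r$, asymptotic Riemann--Roch gives $h^{0}(X(w),\mathcal{L}(w,\rho)^{\otimes m})\sim c\,m^{r}$ with $c>0$; combining this with the isomorphisms $H^{0}(Z(w,\underline{i}),\mathcal{O}_{r}(m\rho))\simeq H^{0}(X(w),\mathcal{L}(w,\rho)^{\otimes m})$ from Section~\ref{Sec2} and with $\dim Z(w,\underline{i})=r$ shows that $\mathcal{O}_{r}(\rho)$ is big on $Z(w,\underline{i})$. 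Finally, tensoring a big line bundle with the effective line bundle $\mathcal{O}_{Z(w,\underline{i})}\big(\sum_{j}X_{j}\big)$ preserves bigness, so $K_{Z(w,\underline{i})}^{-1}$ is big.

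I do not expect any real obstacle: the statement is a direct consequence of Corollary~\ref{cor2.2} together with standard positivity properties of line bundles on $G/B$ and Schubert varieties and the identification of cohomology under $\phi_{(w,\underline{i})}$ already recorded in Section~\ref{Sec2}. The only point that calls for a little care is to avoid claiming that the individual coefficients $1+\langle\rho,\,s_{i_{r}}\cdots s_{i_{l+1}}(\alpha_{i_{l}})\rangle$ are non-negative, which is precisely why effectiveness is deduced from the global generation of $\mathcal{O}_{r}(\rho)$ rather than directly from the displayed formula.
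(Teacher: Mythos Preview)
Your derivation of the displayed formula is the same as the paper's: combine the known expression $K_{Z(w,\underline{i})}^{-1}\simeq\mathcal{O}_{Z(w,\underline{i})}\big(\sum_{j}X_{j}\big)\otimes\mathcal{O}_{r}(\rho)$ with Corollary~\ref{cor2.2}. Your arguments for effectiveness and bigness are correct but take a genuinely different route from the paper's.

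For effectiveness, the paper does exactly what you say you would \emph{not} do: it observes directly that each coefficient $1+\langle\rho,\,s_{i_{r}}\cdots s_{i_{l+1}}(\alpha_{i_{l}})\rangle$ is non-negative. Your caution here is misplaced: because $s_{i_{1}}\cdots s_{i_{r}}$ is reduced, each $\beta_{l}:=s_{i_{r}}\cdots s_{i_{l+1}}(\alpha_{i_{l}})$ is a positive root (these are precisely the elements of $R^{+}(w)$), and since $\rho$ is the sum of all fundamental weights one has $\langle\rho,\beta_{l}\rangle\geq 1$, so every coefficient is in fact at least $2$. This direct check is shorter than your route via non-vanishing of $H^{0}$, and it yields the extra information that ${\rm supp}(D)=\bigcup_{l=1}^{r}X_{l}$, which the paper needs for bigness.

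For bigness, the paper argues that the complement $Z(w,\underline{i})\setminus{\rm supp}(D)$ is the open Bruhat-type cell $\prod_{j}U_{\alpha_{i_{j}}}\cong\mathbb{A}^{r}$, hence affine, and then invokes the general fact (from \cite{Cha}) that an effective divisor on a smooth projective variety with affine complement is big. Your argument instead shows that $\mathcal{O}_{r}(\rho)$ is already big on $Z(w,\underline{i})$ by comparing section growth with that of the ample line bundle $\mathcal{L}(w,\rho)$ on $X(w)$, and then uses that big tensor effective is big. Both arguments are clean; the paper's is more intrinsic to the combinatorics of the BSDH decomposition, while yours exploits the birational morphism $\phi_{(w,\underline{i})}$ and standard positivity on Schubert varieties.
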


\begin{proof}
Let $D\,=\,\sum\limits_{l=1}^{r}(1+\langle \rho,\, s_{i_{r}}\cdots s_{i_{l+1}}(\alpha_{i_{l}})
\rangle )X_{l}.$ Then the line bundle $K_{Z(w,\underline{i})}^{-1}$ on $Z(w,\,\underline{i})$ is
associated to the divisor $D.$

Since $\rho$ is dominant and $Z(w,\,\underline{i})$ is smooth, it follows 
that $D$ is an effective Cartier divisor on $Z(w,\,\underline{i}).$

Note that ${\rm supp}(D)\,=\,\bigcup_{l=1}^{r}X_{l},$ where
${\rm supp}(D)$ denotes the support of $D.$
Since $P_{\alpha_{i_{j}}}\,=\,B\sqcup Bs_{i_{j}}B$ for every $1\,\le\, j\,\le\, r,$ we
have $$Z(w,\,\underline{i})\setminus {\rm supp}(D)\,=\,
\frac{Bs_{i_{1}}B\times Bs_{i_{2}}B\times\cdots\times Bs_{i_{r}}B}{B\times B\times\cdots \times B}.$$
		
Further, $Z(w,\,\underline{i})\setminus {\rm supp}(D)$ is
isomorphic to $\prod_{j=1}^{r}U_{\alpha_{i_{j}}},$ an affine $r$-space (see 
\cite[p.~65, Chapter 2, Section 2.2]{BK05}). Now since $Z(w,\,\underline{i})$ is a smooth projective variety such that $Z(w,
\,\underline{i})\setminus {\rm supp}(D)$ is affine, by \cite[Lemma 5.2]{Cha}, 
$D$ is big. Hence, $K_{Z(w,\underline{i})}^{-1}$ is big.
\end{proof}

	\section{Cohomology of the anti-canonical line bundle on BSDH-variety}\label{Sec3}
	
To describe the results in Section \ref{Sec4}, we recall some results from 
\cite{BKS} and \cite{De}.
	
	\subsection{Cohomology of line bundles on Schubert variety}\label{subsec3}
	
	For any $\lambda\,\in\, X(T),$ the one dimensional $B$-module associated to $\lambda$ will be
	denoted by $\mathbb{C}_{\lambda}$. We now recall a
	result due to Demazure on short exact sequences of $B$-modules:
	
	\begin{lemma}[{\cite[p.~271]{Dem}}]\label{lemma 1.1}
		Let $\alpha$ be a simple root, and let $\lambda\,\in \,X(T)$ be such that $\langle \lambda , \,\alpha \rangle \ge 0.$
		Let $ev\,:\,H^0(s_{\alpha}, \,\lambda)\,\longrightarrow \,\mathbb{C}_{\lambda}$ be the evaluation map. Then the following
		statements hold:
		\begin{enumerate}
			\item[(1)] If $\langle \lambda,\, \alpha \rangle \,=\,0,$ then $H^0(s_{\alpha},\, \lambda)\,\simeq\, \mathbb{C}_{\lambda}.$
			
			\item[(2)] If $\langle \lambda , \,\alpha \rangle \ge 1,$ then $\mathbb{C}_{s_{\alpha}(\lambda)}\,\hookrightarrow\,
			H^0(s_{\alpha},\, \lambda) $, and there is a short exact sequence of $B$-modules:
			$$0\,\longrightarrow\, H^0(s_{\alpha},\, \lambda-\alpha)\,\longrightarrow \,H^0(s_{\alpha},\, \lambda)/\mathbb{C}_{s_{\alpha}(\lambda)}
			\,\longrightarrow \,\mathbb{C}_{\lambda}\,\longrightarrow\, 0.$$ Furthermore, $H^{0}(s_{\alpha},\, \lambda- \alpha)\,=\,0$
			if $\langle\lambda ,\, \alpha \rangle\,=\,1.$
			
\item[(3)] Let $m\,=\,\langle \lambda ,\,\alpha \rangle.$ As a $B$-module, $H^0(s_{\alpha},\, \lambda)$ has a composition series 
$$0\,\subseteq\, V_{m}\,\subseteq\, V_{m-1}\,\subseteq\, \cdots \,\subseteq\, V_{0}\,=\,H^0(s_{\alpha},\,\lambda)$$
such that $V_{i}/V_{i+1}\,\simeq\, \mathbb{C}_{\lambda - i\alpha}$ for $i\,=\,0,\,1,\,\cdots,\,m-1$ and
$V_{m}\,=\,\mathbb{C}_{s_{\alpha}(\lambda)}.$
\end{enumerate}
\end{lemma}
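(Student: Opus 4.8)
The plan is to make the $B$-module $H^{0}(s_{\alpha},\,\lambda)$ completely explicit by working on the big cell of the Schubert variety $X(s_{\alpha})\,=\,\overline{Bs_{\alpha}B}/B\,=\,P_{\alpha}/B\,\simeq\,\mathbb{P}^{1}$, and then to read off the three statements from that description. I would start from the following standard facts about $P_{\alpha}/B$: it has exactly two $T$-fixed points, the $B$-fixed point $eB$ and the point $s_{\alpha}B$; one has $\mathfrak{p}_{\alpha}\,=\,\mathfrak{b}\oplus\mathfrak{g}_{\alpha}$, so the tangent space $T_{eB}(P_{\alpha}/B)\,\simeq\,\mathfrak{g}_{\alpha}$ has $T$-weight $\alpha$; the open cell $Bs_{\alpha}B/B$ is the complement of $eB$ and is parametrized by the root group $U_{-\alpha}\,\subset\,B$ through $t\,\longmapsto\,u_{-\alpha}(t)\,n_{\alpha}B$; and $\mathcal{L}(\mathbb{C}_{\lambda})$ restricted to this $\mathbb{P}^{1}$ has degree $m\,:=\,\langle\lambda,\,\alpha\rangle$ (this also follows from Lemma \ref{lem2.1}, which identifies $\mathcal{O}_{1}(\lambda)$ with $\mathcal{O}(\langle\lambda,\alpha\rangle X_{1})$ and $X_{1}$ with the point $eB$).

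Writing $\mathcal{L}$ for the restriction of $\mathcal{L}(\mathbb{C}_{\lambda})$ to $P_{\alpha}/B$, a section of $\mathcal{L}$ restricted to the open cell and trivialized there through the lift $t\,\longmapsto\,u_{-\alpha}(t)n_{\alpha}$ becomes a regular function $f(t)$, and the condition that it extend across $eB$ forces $\deg f\,\le\,m$. Hence $H^{0}(s_{\alpha},\,\lambda)\,=\,0$ when $m\,<\,0$, while for $m\,\ge\,0$ there is an identification $H^{0}(s_{\alpha},\,\lambda)\,\simeq\,\{f\in\mathbb{C}[t]\,:\,\deg f\,\le\,m\}$, of dimension $m+1$. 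A direct computation of the induced $B$-action then yields: the unipotent radical $R_{u}(P_{\alpha})$ acts trivially (being normal in $P_{\alpha}$ it acts trivially on $P_{\alpha}/B$, and also on the fibres, since a character of $B$ kills unipotent elements); the root group $U_{-\alpha}$ acts by translation $f(t)\,\longmapsto\,f(t-c)$; and $t_{0}\in T$ acts by $f(t)\,\longmapsto\,(s_{\alpha}(\lambda))(t_{0})\,f(\alpha(t_{0})\,t)$. In particular the monomial $t^{k}$ is a $T$-weight vector of weight $s_{\alpha}(\lambda)+k\alpha\,=\,\lambda-(m-k)\alpha$, so the $T$-weights of $H^{0}(s_{\alpha},\,\lambda)$ are exactly $\lambda,\,\lambda-\alpha,\,\ldots,\,\lambda-m\alpha\,=\,s_{\alpha}(\lambda)$, each with multiplicity one, and the evaluation map $ev$ at $eB$ is, in this model, the functional ``coefficient of $t^{m}$''.

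With this model in hand all three parts are short. If $m\,=\,0$ then $H^{0}(s_{\alpha},\,\lambda)$ is the line of constant functions, of weight $s_{\alpha}(\lambda)\,=\,\lambda$, and $ev$, being the identity on constants, is an isomorphism; this is (1). For (3), since $B$ acts through $T\,U_{-\alpha}$, both scaling and translation preserve $V_{j}\,:=\,\{f\,:\,\deg f\,\le\,m-j\}$, so $0\,\subseteq\,V_{m}\,\subseteq\,V_{m-1}\,\subseteq\,\cdots\,\subseteq\,V_{0}\,=\,H^{0}(s_{\alpha},\,\lambda)$ is a chain of $B$-submodules with $V_{i}/V_{i+1}$ spanned by the image of $t^{m-i}$, hence $\simeq\,\mathbb{C}_{\lambda-i\alpha}$ for $0\,\le\,i\,\le\,m-1$, and $V_{m}\,\simeq\,\mathbb{C}_{s_{\alpha}(\lambda)}$. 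For (2), assume $m\,\ge\,1$; then $\mathbb{C}_{s_{\alpha}(\lambda)}\,=\,V_{m}\,\hookrightarrow\,H^{0}(s_{\alpha},\,\lambda)$, and the $B$-equivariant map $ev$ is surjective (it is nonzero on $t^{m}$) with kernel $V_{1}$, so dividing by the submodule $V_{m}\,\subseteq\,V_{1}$ gives a short exact sequence of $B$-modules
$$0\,\longrightarrow\,V_{1}/V_{m}\,\longrightarrow\,H^{0}(s_{\alpha},\,\lambda)/\mathbb{C}_{s_{\alpha}(\lambda)}\,\longrightarrow\,\mathbb{C}_{\lambda}\,\longrightarrow\,0 .$$
It remains to identify $V_{1}/V_{m}$ with $H^{0}(s_{\alpha},\,\lambda-\alpha)$: the key point is that the formal derivative $d/dt\,:\,V_{1}\,=\,\{f:\deg f\le m-1\}\,\longrightarrow\,\{f:\deg f\le m-2\}$ is $B$-equivariant once the target is given the module structure of $H^{0}(s_{\alpha},\,\lambda-\alpha)$ (note $\langle\lambda-\alpha,\,\alpha\rangle\,=\,m-2$) --- it commutes with the $U_{-\alpha}$-translations on the nose, and the factor $\alpha(t_{0})$ it produces under $T$ is exactly the weight shift $s_{\alpha}(\lambda-\alpha)\,=\,s_{\alpha}(\lambda)+\alpha$ --- and its kernel is the line of constants $V_{m}$. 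This induces a $B$-isomorphism $V_{1}/V_{m}\,\simeq\,H^{0}(s_{\alpha},\,\lambda-\alpha)$, which rewrites the sequence above as the asserted one; when $m\,=\,1$ both $V_{1}/V_{m}$ and $H^{0}(s_{\alpha},\,\lambda-\alpha)$ are zero, which is the last assertion of (2).

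The step needing the most care is the set-up in the first two paragraphs: one must fix the sign and normalization conventions so that the degree of $\mathcal{L}(\mathbb{C}_{\lambda})$ on $\mathbb{P}^{1}$, the $T$-weight of $T_{eB}(P_{\alpha}/B)$, and the $T$-weights of the monomials $t^{k}$ are mutually consistent and match the claimed weights $\lambda,\,\ldots,\,s_{\alpha}(\lambda)$. Once the cell model is correctly in place, (1) and (3) are immediate, and the only genuine idea in (2) is to recognize that differentiation in the cell coordinate realizes $V_{1}/V_{m}\,\simeq\,H^{0}(s_{\alpha},\,\lambda-\alpha)$. One could equally route the same bookkeeping through the homomorphism $SL_{2}\,\longrightarrow\,L_{\alpha}$ onto a Levi subgroup of $P_{\alpha}$ and the representation theory of $SL_{2}$ acting on $H^{0}(\mathbb{P}^{1},\,\mathcal{O}(m))\,=\,\mathrm{Sym}^{m}(\mathbb{C}^{2})$.
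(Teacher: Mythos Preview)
The paper does not give a proof of this lemma: it is simply recalled from Demazure's article \cite{Dem} as a known ingredient, with no argument supplied. Your write-up is therefore not competing with anything in the paper itself.

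That said, your argument is correct and is essentially the classical proof. The explicit ``big-cell'' model of $H^{0}(s_{\alpha},\lambda)$ as polynomials of degree $\le m$ on $U_{-\alpha}\,n_{\alpha}B/B$, together with the observation that $B$ acts through $B_{\alpha}=T\cdot U_{-\alpha}$ by the scaling/translation formulas you give, is exactly how Demazure sets things up in \cite{Dem}; parts (1) and (3) then drop out immediately, and part (2) amounts to identifying the subquotient $V_{1}/V_{m}$ with $H^{0}(s_{\alpha},\lambda-\alpha)$. Your use of $d/dt$ for this last identification is clean and works for the reasons you indicate (translation-equivariance is obvious, and the $T$-weight shift $s_{\alpha}(\lambda)\mapsto s_{\alpha}(\lambda)+\alpha=s_{\alpha}(\lambda-\alpha)$ is exactly what the derivative produces). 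One small caveat: the differentiation map, and more generally the polynomial model with its weight bookkeeping, is sensitive to the sign conventions for $B$, for the line bundle $\mathcal{L}(\lambda)$, and for the cell coordinate; you flag this yourself, and with the paper's convention that $B$ is the \emph{negative} Borel (so $U_{-\alpha}\subset B$) everything you wrote is consistent. The alternative route you mention at the end, via the $SL_{2}$-Levi and $\mathrm{Sym}^{m}(\mathbb{C}^{2})$, is of course equivalent and is the way the result is often phrased in the literature.
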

	
We define the dot action of $W$ on $X(T)$ by the rule $w\cdot \lambda\,=\, 
w(\lambda + \rho)-\rho,$ where $w\in W,$ and $\lambda\,\in\, X(T).$ The following lemma is an 
immediate consequence of the exact sequences of Lemma \ref{lemma 1.1}.
	
	\begin{lemma}\label{lemma 1.2}
		Let $w\,\in\, W$ and $\alpha$ be a simple root; set $v\,=\,ws_{\alpha}$.
		If $\ell(w) \,=\,\ell(v)+1$, then the following statements hold:
		\begin{enumerate}
			\item If $\langle \lambda ,\, \alpha \rangle \,\geq\, 0$, then 
			$H^{j}(w ,\, \lambda) \,=\, H^{j}(v,\, H^0({s_\alpha,\, \lambda}) )$ for all $j\,\geq\, 0$.
			
			\item If $\langle \lambda ,\,\alpha \rangle \,\geq\, 0$, then $H^{j}(w , \,\lambda ) \,=\, H^{j+1}(w ,\, s_{\alpha}\cdot \lambda)$
			for all $j\,\geq\, 0$.
			
			\item If $\langle \lambda ,\, \alpha \rangle \,\leq\, -2$, then $H^{j+1}(w ,\, \lambda )
			\,=\, H^{j}(w ,\,s_{\alpha}\cdot \lambda)$ for all $j\,\geq\, 0$. 
			
			\item If $\langle \lambda ,\, \alpha \rangle\,=\, -1$, then $H^{j}( w ,\,\lambda)$ vanish for every $j\,\geq\, 0$.
		\end{enumerate}
	\end{lemma}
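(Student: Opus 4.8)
The plan is to derive all four assertions from the $\mathbb{P}^{1}$-fibration relating $X(w)$ and $X(v)$, using the Leray spectral sequence together with the cohomology of line bundles on $P_{\alpha}/B$ supplied by Lemma \ref{lemma 1.1}; recall that $H^{j}(w,\mu)=H^{j}(X(w),\mathcal{L}(w,\mu))$ and likewise on $X(v)$. First I would set up the geometry. Let $\pi_{\alpha}\colon G/B\to G/P_{\alpha}$ be the natural projection. Since $\ell(ws_{\alpha})=\ell(v)<\ell(w)$, the Schubert variety $X(w)$ is stable under right multiplication by $P_{\alpha}$, so $X(w)=\pi_{\alpha}^{-1}(\overline{X})$ where $\overline{X}:=\pi_{\alpha}(X(w))$; and since $\ell(vs_{\alpha})=\ell(w)>\ell(v)$, the map $\pi_{\alpha}$ restricts to an isomorphism $X(v)\xrightarrow{\ \sim\ }\overline{X}$. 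As $G/B\to G/P_{\alpha}$ is a Zariski-locally trivial $\mathbb{P}^{1}$-bundle, its restriction $\pi\colon X(w)\to\overline{X}\cong X(v)$ is again a $\mathbb{P}^{1}$-bundle (this is exactly where the hypothesis $\ell(w)=\ell(v)+1$ enters). Base change along the Cartesian square formed by $\pi,\pi_{\alpha}$ and the two closed embeddings, together with the fact that $\pi_{\alpha}$ is the homogeneous fibration with fibre $P_{\alpha}/B$, identifies (compatibly with the $B$-actions), for any character $\lambda$ of $B$,
\[
\pi_{*}\mathcal{L}(w,\lambda)\cong\mathcal{L}(v,H^{0}(s_{\alpha},\lambda)),\qquad R^{1}\pi_{*}\mathcal{L}(w,\lambda)\cong\mathcal{L}(v,H^{1}(s_{\alpha},\lambda)),
\]
while $R^{q}\pi_{*}\mathcal{L}(w,\lambda)=0$ for $q\geq 2$. (Alternatively, one may choose a reduced word $\underline{i}=(i_{1},\dots,i_{r})$ for $w$ with $s_{i_{r}}=s_{\alpha}$ and run the same argument with the $\mathbb{P}^{1}$-fibration $f_{r}\colon Z(w,\underline{i})\to Z(v,\underline{i}')$ of Section \ref{Sec2} and $\mathcal{O}_{r}(-)$ in place of $\mathcal{L}(w,-)$.)

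Next I would record the needed cohomology on $P_{\alpha}/B\cong\mathbb{P}^{1}$, on which $\mathcal{L}(\lambda)$ has degree $\langle\lambda,\alpha\rangle$. By Lemma \ref{lemma 1.1} and its standard Serre-duality complement (using $\omega_{P_{\alpha}/B}\cong\mathcal{L}(-\alpha)$): $H^{1}(s_{\alpha},\lambda)=0$ when $\langle\lambda,\alpha\rangle\geq 0$; $H^{0}(s_{\alpha},\lambda)=H^{1}(s_{\alpha},\lambda)=0$ when $\langle\lambda,\alpha\rangle=-1$; and $H^{0}(s_{\alpha},\lambda)=0$ together with $H^{1}(s_{\alpha},\lambda)\cong H^{0}(s_{\alpha},s_{\alpha}\cdot\lambda)$ as $B$-modules when $\langle\lambda,\alpha\rangle\leq -2$. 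I would also use the elementary identities $\langle s_{\alpha}\cdot\lambda,\alpha\rangle=-\langle\lambda,\alpha\rangle-2$ and $s_{\alpha}\cdot(s_{\alpha}\cdot\lambda)=\lambda$.

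Then I would run the Leray long exact sequence of $\pi$ for $\mathcal{L}(w,\lambda)$, which (as $R^{\geq 2}\pi_{*}=0$) reads
\[
\cdots\to H^{n}(v,H^{0}(s_{\alpha},\lambda))\to H^{n}(w,\lambda)\to H^{n-1}(v,H^{1}(s_{\alpha},\lambda))\xrightarrow{\ d_{2}\ }H^{n+1}(v,H^{0}(s_{\alpha},\lambda))\to\cdots,
\]
and read off the cases. If $\langle\lambda,\alpha\rangle\geq 0$ then $R^{1}\pi_{*}=0$, so $H^{n}(w,\lambda)\cong H^{n}(v,H^{0}(s_{\alpha},\lambda))$ for all $n$; this is (1). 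If $\langle\lambda,\alpha\rangle=-1$ then both $\pi_{*}$ and $R^{1}\pi_{*}$ vanish, so $H^{n}(w,\lambda)=0$ for all $n$; this is (4). If $\langle\lambda,\alpha\rangle\leq -2$ then $\pi_{*}=0$, so $H^{n}(w,\lambda)\cong H^{n-1}(v,H^{1}(s_{\alpha},\lambda))\cong H^{n-1}(v,H^{0}(s_{\alpha},s_{\alpha}\cdot\lambda))$, and since $\langle s_{\alpha}\cdot\lambda,\alpha\rangle\geq 0$, case (1) applied to $s_{\alpha}\cdot\lambda$ rewrites the right side as $H^{n-1}(w,s_{\alpha}\cdot\lambda)$; taking $n=j+1$ gives (3). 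Finally, for (2) (where $\langle\lambda,\alpha\rangle\geq 0$), apply the case just proved to $\mu:=s_{\alpha}\cdot\lambda$, which satisfies $\langle\mu,\alpha\rangle\leq -2$: then $H^{j+1}(w,\mu)\cong H^{j}(w,s_{\alpha}\cdot\mu)=H^{j}(w,\lambda)$, i.e. $H^{j+1}(w,s_{\alpha}\cdot\lambda)=H^{j}(w,\lambda)$, which is (2). Thus (2) is merely (3) read in the opposite direction.

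The only non-formal step, and hence the point to be careful about, is the equivariant identification of $R^{q}\pi_{*}\mathcal{L}(w,\lambda)$ with $\mathcal{L}(v,H^{q}(s_{\alpha},\lambda))$: it rests on $X(w)$ being an honest $\mathbb{P}^{1}$-bundle over $\overline{X}\cong X(v)$ (which fails without $\ell(w)=\ell(v)+1$) and on base change for the Cartesian square, and it is also where the $\langle\lambda,\alpha\rangle\leq -2$ refinement of Lemma \ref{lemma 1.1} is used. Granting these standard facts, the four assertions follow by bookkeeping with the displayed long exact sequence and the identity $\langle s_{\alpha}\cdot\lambda,\alpha\rangle=-\langle\lambda,\alpha\rangle-2$.
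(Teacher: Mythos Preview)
Your proof is correct and is precisely the standard argument that the paper gestures at: the paper itself gives no detailed proof, merely asserting that the lemma is ``an immediate consequence of the exact sequences of Lemma~\ref{lemma 1.1}''. Your use of the $\mathbb{P}^{1}$-fibration $X(w)\to X(v)$ (or equivalently $f_{r}\colon Z(w,\underline{i})\to Z(v,\underline{i}')$), the Leray long exact sequence, and the fibre computations supplied by Lemma~\ref{lemma 1.1} together with the $SL_{2}$ Borel--Weil--Bott identification $H^{1}(s_{\alpha},\lambda)\cong H^{0}(s_{\alpha},s_{\alpha}\cdot\lambda)$ is exactly what underlies that one-line citation.
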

	
	Henceforth, we will denote the Levi subgroup of $P_{\alpha}$ ($\alpha \,\in\, S$) containing $T$ by $L_{\alpha}$;
	the subgroup $L_{\alpha}\bigcap B\, \subset\, L_\alpha$ will be denoted by $B_{\alpha}.$
	
	The following lemma will be used in the cohomology computations.
	
	\begin{lemma}\label{lemma1.3}
		Let $V$ be an irreducible $L_{\alpha}$-module. Let $\lambda$ be a character of $B_{\alpha}$. Then
		the following statements hold: 
		\begin{enumerate}
			\item As $L_{\alpha}$-modules, $H^j(L_{\alpha}/B_{\alpha},\, V \otimes \mathbb C_{\lambda})\,\simeq\, V \otimes
			H^j(L_{\alpha}/B_{\alpha},\, \mathbb C_{\lambda})$ for every $j\ge 0.$
			
			\item If $\langle \lambda ,\, \alpha \rangle \,\geq\, 0$, then 
			$H^{0}(L_{\alpha}/B_{\alpha} ,\, V\otimes \mathbb{C}_{\lambda})$ 
			is isomorphic, as an $L_{\alpha}$-module, to $V\otimes H^{0}(L_{\alpha}/B_{\alpha} ,\, \mathbb{C}_{\lambda})$. Furthermore, 
			$$H^{j}(L_{\alpha}/B_{\alpha} ,\, V\otimes \mathbb{C}_{\lambda})\, =\,0$$ for every $j\,\geq\, 1$.
			
			\item If $\langle \lambda ,\, \alpha \rangle\,\leq\, -2$, then 
			$H^{0}(L_{\alpha}/B_{\alpha} ,\, V\otimes \mathbb{C}_{\lambda})\,=\,0$, 
			and $H^{1}(L_{\alpha}/B_{\alpha} ,\, V\otimes \mathbb{C}_{\lambda})$
			is isomorphic to $V\otimes H^{0}(L_{\alpha}/B_{\alpha} , \, \mathbb{C}_{s_{\alpha}\cdot\lambda})$. 
			
			\item If $\langle \lambda ,\, \alpha \rangle\,=\, -1$, then 
			$H^{j}( L_{\alpha}/B_{\alpha} ,\, V\otimes \mathbb{C}_{\lambda}) \,=\,0$ 
			for every $j\,\geq\, 0$.
		\end{enumerate}
	\end{lemma}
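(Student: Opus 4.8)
The plan is to reduce all four statements to the tensor identity (projection formula) for homogeneous bundles on $L_{\alpha}/B_{\alpha}$ together with the elementary cohomology of line bundles on $L_{\alpha}/B_{\alpha}\,\simeq\,\mathbb{P}^{1}$, which is already packaged in Lemmas \ref{lemma 1.1} and \ref{lemma 1.2}. First I would prove $(1)$. Since $V$ is an $L_{\alpha}$-module (and not merely a $B_{\alpha}$-module), the associated $L_{\alpha}$-linearized sheaf $\mathcal{L}_{L_{\alpha}/B_{\alpha}}(V\otimes\mathbb{C}_{\lambda})$ is isomorphic to $\underline{V}\otimes_{\mathcal{O}}\mathcal{L}_{L_{\alpha}/B_{\alpha}}(\mathbb{C}_{\lambda})$, where $\underline{V}\,=\,\mathcal{O}_{L_{\alpha}/B_{\alpha}}\otimes_{\mathbb{C}}V$ is the trivial bundle equipped with the $L_{\alpha}$-linearization induced by the $L_{\alpha}$-module structure of $V$; the isomorphism is the fibrewise-linear map $[g,\,v\otimes c]\,\longmapsto\,(g\cdot v)\otimes[g,\,c]$, which one checks is well defined (the two $B_{\alpha}$-twistings cancel) and $L_{\alpha}$-equivariant. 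This is the $L_{\alpha}/B_{\alpha}$-version of the standard projection formula; see for instance \cite{Jan} or \cite{BK05}. Taking cohomology and using that $V$ is finite dimensional, so that tensoring by the constant sheaf $V$ commutes with cohomology, gives $L_{\alpha}$-module isomorphisms $H^{j}(L_{\alpha}/B_{\alpha},\, V\otimes\mathbb{C}_{\lambda})\,\simeq\, H^{j}(L_{\alpha}/B_{\alpha},\,\mathbb{C}_{\lambda})\otimes_{\mathbb{C}}V$ for all $j\,\ge\,0$, with $L_{\alpha}$ acting diagonally on the right; this is $(1)$.

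For $(2)$--$(4)$ it then remains, in view of $(1)$, to compute the cohomology groups $H^{j}(L_{\alpha}/B_{\alpha},\,\mathbb{C}_{\lambda})\,=\,H^{j}(s_{\alpha},\,\lambda)$ (here I use the identification $L_{\alpha}/B_{\alpha}\,\simeq\, P_{\alpha}/B\,=\,X(s_{\alpha})$). I would obtain these from Lemma \ref{lemma 1.2} applied with $w\,=\,s_{\alpha}$ and $v\,=\,e$, together with $H^{0}(e,\,M)\,=\,M$ and $H^{j}(e,\,M)\,=\,0$ for $j\,\ge\,1$. If $\langle\lambda,\,\alpha\rangle\,\ge\,0$, then Lemma \ref{lemma 1.2}$(1)$ gives $H^{j}(s_{\alpha},\,\lambda)\,=\,H^{j}(e,\,H^{0}(s_{\alpha},\,\lambda))$, which equals $H^{0}(s_{\alpha},\,\lambda)$ for $j\,=\,0$ and vanishes for $j\,\ge\,1$; feeding this through $(1)$ gives $(2)$. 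If $\langle\lambda,\,\alpha\rangle\,=\,-1$, then Lemma \ref{lemma 1.2}$(4)$ gives $H^{j}(s_{\alpha},\,\lambda)\,=\,0$ for all $j$, hence $(4)$. If $\langle\lambda,\,\alpha\rangle\,\le\,-2$, then $\mathcal{L}(\mathbb{C}_{\lambda})$ has negative degree on $L_{\alpha}/B_{\alpha}\,\simeq\,\mathbb{P}^{1}$, so $H^{0}(s_{\alpha},\,\lambda)\,=\,0$, while Lemma \ref{lemma 1.2}$(3)$ with $j\,=\,0$ gives $H^{1}(s_{\alpha},\,\lambda)\,=\,H^{0}(s_{\alpha},\,s_{\alpha}\cdot\lambda)\,=\,H^{0}(L_{\alpha}/B_{\alpha},\,\mathbb{C}_{s_{\alpha}\cdot\lambda})$, and $H^{j}(s_{\alpha},\,\lambda)\,=\,0$ for $j\,\ge\,2$ since $\dim L_{\alpha}/B_{\alpha}\,=\,1$; feeding this through $(1)$ gives $(3)$.

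The one step that requires genuine care is the tensor identity of the first paragraph: one must pin down the two $L_{\alpha}$-linearizations precisely and verify both that the map $[g,\,v\otimes c]\,\mapsto\,(g\cdot v)\otimes[g,\,c]$ descends to the associated-bundle quotients and that it intertwines the $L_{\alpha}$-actions. This is exactly where the hypothesis that $V$ is an $L_{\alpha}$-module (not just a $B_{\alpha}$-module) is used; once it is established, $(1)$--$(4)$ are formal consequences of it and of the $\mathbb{P}^{1}$-computations recorded in Lemmas \ref{lemma 1.1} and \ref{lemma 1.2}.
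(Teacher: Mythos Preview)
Your proposal is correct and follows essentially the same approach as the paper: for $(1)$ the paper simply cites the tensor identity from \cite[I, Prop.~4.8 and Prop.~5.12]{Jan}, which is exactly the projection formula you spell out explicitly, and for $(2)$--$(4)$ the paper likewise reduces to Lemma~\ref{lemma 1.2} with $w=s_{\alpha}$ via the identification $L_{\alpha}/B_{\alpha}\simeq P_{\alpha}/B$. Your version supplies more detail (the explicit map $[g,v\otimes c]\mapsto (g\cdot v)\otimes[g,c]$ and the case-by-case unwinding of Lemma~\ref{lemma 1.2}), but the strategy is identical.
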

	
	\begin{proof} Proof (1).\, By \cite[p.~53, Proposition 4.8, I]{Jan} and \cite[p.~77, Proposition 5.12, I]{Jan}, 
		for all $j\,\geq\, 0$, we have the following isomorphism of $L_{\alpha}$-modules:
		$$H^j(L_{\alpha}/B_{\alpha}, \,V \otimes \mathbb C_{\lambda})\,\simeq\, V \otimes
		H^j(L_{\alpha}/B_{\alpha},\, \mathbb C_{\lambda}).$$ 
		
		Statements (2), (3) and (4) follow from Lemma \ref{lemma 1.2} by taking $w\,=\,s_{\alpha}$ and 
		using the fact that $L_{\alpha}/B_{\alpha} \,\simeq\, P_{\alpha}/B$.
	\end{proof}
	
	We recall a result on the structure of the indecomposable $B_{\alpha}$-modules.
	
	\begin{lemma}[{\cite[p.~130, Corollary 9.1]{BKS}}]\label{lemma 1.4}
		Any finite dimensional indecomposable $B_{\alpha}$-module $V$ is isomorphic to 
		$V^{\prime}\otimes \mathbb{C}_{\lambda}$ for some irreducible representation
		$V^{\prime}$ of $L_{\alpha}$ and some character $\lambda$ of $B_{\alpha}$.
	\end{lemma}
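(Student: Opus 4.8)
The plan is to reduce the statement to a rank-one computation. Write $Z$ for the identity component of the centre of $L_{\alpha}$ and $H\,=\,[L_{\alpha},\,L_{\alpha}]$, so that $L_{\alpha}\,=\,Z\cdot H$ with $H$ semisimple of rank one (with root system $\{\alpha,\,-\alpha\}$), and correspondingly $B_{\alpha}\,=\,Z\cdot B_{H}$, where $B_{H}\,=\,B_{\alpha}\cap H$ is a Borel subgroup of $H$ whose unipotent radical is $U_{-\alpha}$. First I would split off the central torus: given a finite dimensional $B_{\alpha}$-module $V$, the torus $Z$ acts semisimply, so $V\,=\,\bigoplus_{\chi}V_{\chi}$ decomposes into $Z$-isotypic components, and since $Z$ is central in $L_{\alpha}$ it commutes with $H$, so each $V_{\chi}$ is stable under $B_{\alpha}\,\subseteq\, Z\cdot H$. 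If $V$ is indecomposable this forces $V\,=\,V_{\chi}$ for a single character $\chi$; in particular $Z$ then acts on $V$ by scalars, so $V$ is indecomposable already as a $B_{H}$-module.

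Next I would classify the indecomposable $B_{H}$-modules. Over $\mathbb{C}$, a finite dimensional module for $B_{H}\,=\,T_{H}\ltimes U_{-\alpha}$ (with $T_{H}\,=\,T\cap H$) is the same datum as a finite dimensional $X(T_{H})$-graded vector space $V\,=\,\bigoplus_{m}V_{m}$ together with the nilpotent operator $N$ by which a generator of $\mathrm{Lie}(U_{-\alpha})$ acts, $N$ being homogeneous of degree $-\alpha$; equivalently, $V$ is a finite dimensional graded module over the graded polynomial ring $\mathbb{C}[N]$ (with $\deg N\,=\,-\alpha$) on which $N$ is nilpotent. By the structure theory of finitely generated graded modules over a graded principal ideal domain, such a $V$ is a direct sum of grading-shifts of the cyclic modules $\mathbb{C}[N]/(N^{k})$, $k\,\ge\, 1$. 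A grading-shift of $\mathbb{C}[N]/(N^{k})$ is the ``$\alpha$-string'' with one dimensional graded pieces in the weights $m_{0},\,m_{0}-\alpha,\,\ldots,\,m_{0}-(k-1)\alpha$ on which $N$ acts by successive isomorphisms; this is precisely the restriction to $B_{H}$ of the $k$-dimensional irreducible $H$-module, twisted by a suitable character of $B_{H}$. Hence every indecomposable $B_{H}$-module is of the form $V_{H}\otimes\mathbb{C}_{c}$ with $V_{H}$ an irreducible $H$-module and $c\,\in\, X(T_{H})$.

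Finally I would twist back up to $L_{\alpha}$. Since the restriction map $X(T)\,\to\, X(T_{H})$ is surjective and $X(B_{\alpha})\,=\,X(T)$ (because $[B_{\alpha},\,B_{\alpha}]\,=\,U_{-\alpha}$), I can choose $\lambda\,\in\, X(T)$ restricting to the character $c$ found above and set $V'\,:=\,V\otimes\mathbb{C}_{-\lambda}$. Then $V'$ is a genuine $B_{\alpha}$-module on which $Z$ still acts by a single scalar character while $B_{H}$ acts as the irreducible $H$-module $V_{H}$; since $V_{H}$ extends to $H$ and $L_{\alpha}\,=\,Z\cdot H$, the compatibility over the finite subgroup $Z\cap H$ that makes $V'$ a $B_{\alpha}$-module is exactly what is needed to glue the $Z$- and $H$-actions into an irreducible $L_{\alpha}$-module structure on $V'$. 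Therefore $V\,\cong\, V'\otimes\mathbb{C}_{\lambda}$ with $V'$ irreducible over $L_{\alpha}$, which is the desired conclusion.

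The step I expect to be the main obstacle is the rank-one classification: one must establish that over $\mathbb{C}$ the only indecomposable graded nilpotent $\mathbb{C}[N]$-modules are the shifted cyclic ones and identify them with restrictions of the irreducible $\mathrm{SL}_{2}$-modules. This is exactly the place where characteristic zero is used — in positive characteristic the indecomposable modules for the Borel subgroup of $\mathrm{SL}_{2}$ are far more complicated — which is consistent with the standing hypothesis of this subsection.
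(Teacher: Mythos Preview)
The paper does not give its own proof of this lemma: it is stated with a citation to \cite[p.~130, Corollary 9.1]{BKS} and used as a black box. So there is nothing in the paper to compare your argument against.

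That said, your proposal is a correct and self-contained proof. The decomposition $L_{\alpha}\,=\,Z\cdot H$ with $H\cong\mathrm{SL}_{2}$ (using that $G$ is simply connected), the reduction of an indecomposable $B_{\alpha}$-module to a single $Z$-isotypic piece, and the identification of indecomposable $B_{H}$-modules with ``$\alpha$-strings'' via a graded Jordan decomposition are exactly the standard route, and this is essentially the argument in \cite{BKS}. Your final gluing step is also sound: the compatibility over the finite group $Z\cap H$ is automatic because $Z\cap H\subset T\subset B_{\alpha}$ already acts on $V'$, so its action agrees with both the $Z$-character and the restriction from $H$. Your remark about characteristic zero is accurate and matches the paper's caveat that the results of this subsection do not extend to positive characteristic.
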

	
	\subsection{Cohomology of the anti-canonical line bundle on BSDH-variety}
	
	In this subsection we establish an inductive method for computing the cohomology groups of the anti-canonical
	line bundle on a BSDH-variety.
	
	Let $w\,=\,s_{i_{1}}s_{i_{2}}\cdots s_{i_{r}}$ be a reduced expression of $w$; set $\underline{i}\,:=\,(i_{1},\, i_{2},\,
	\cdots ,\, i_{r}).$ Let $u\,=\,s_{i_{2}}\cdots s_{i_{r}}$ and $\underline{i}'\,=\,(i_{2},\,\cdots ,\,i_{r})$;
	in particular, $\ell(u)\,=\,\ell(w)-1.$
	
Let $$f_{1}\,:\, Z(w,\, \underline{i})\,\longrightarrow \,P_{\alpha_{i_{1}}}/B$$ be the natural projection; it is a
$Z(u,\,\underline{i}')$-fibration.
	Let $\mathcal{R}\, \longrightarrow\, Z(w, \,\underline{i})$ be the relative tangent bundle for the projection $f_{1}.$
	So the restriction of $\mathcal{R}$ to $Z(u,\,\underline{i}')$ is the tangent bundle
	$T_{(u,\underline{i}')}$ on $Z(u,\,\underline{i}').$ We denote the tangent bundle of $Z(w,\, \underline{i})$ by $T_{(w, \underline{i})}.$
	The differential of $f_{1}$ produces the short exact sequence
	\begin{equation}\label{e2}
		0\,\longrightarrow\, \mathcal{R} \,\longrightarrow\, T_{(w, \underline{i})}\,\longrightarrow \, {f_{1}}^*T_{P_{\alpha_{i_{1}}}/B}
		\,\longrightarrow\, 0,
	\end{equation} 
	where $T_{P_{\alpha_{i_{1}}}/B}$ is the tangent bundle on $P_{\alpha_{i_{1}}}/B.$	
	
	\begin{lemma}\label{lem3.1}
		Let $u,\,w,\,\underline{i}'$ and $\underline{i}$ be as above. Then the following statements hold:
		\begin{itemize}
			\item[(i)] $H^0(Z(w,\,\underline{i}),\, K_{Z(w,\underline{i})}^{-1})\,=\,H^0(s_{i_{1}},\, H^0(Z(u,\,\underline{i}' ),\,
			K_{Z(u,\underline{i}')}^{-1})\otimes \mathbb{C}_{\alpha_{i_{1}}}).$
			
			\item[(ii)] There is a short exact sequence
			$$0\,\longrightarrow \,H^1(s_{i_{1}},\,H^{j-1}(Z(u,\,\underline{i}' ),\, K_{Z(u,\underline{i}')}^{-1})\otimes \mathbb{C}_{\alpha_{i_{1}}})\,
			\longrightarrow\, H^j(Z(w,\,\underline{i}),\, K_{Z(w,\underline{i})}^{-1})\, \longrightarrow$$
			$$\longrightarrow\, H^0(s_{i_{1}},\,H^{j}(Z(u,\,\underline{i}' ),\, K_{Z(u,\underline{i}')}^{-1})\otimes \mathbb{C}_{\alpha_{i_{1}}})
			\,\longrightarrow\,0$$
			for all $j\,\ge\, 0$.
		\end{itemize}
	\end{lemma}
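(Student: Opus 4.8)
The plan is to realize the anti-canonical bundle of $Z(w,\underline{i})$ in terms of that of the fibre $Z(u,\underline{i}')$ using the $\mathbb{P}^1$-fibration $f_1$, and then push down along $f_1$ to convert sheaf cohomology on $Z(w,\underline{i})$ into cohomology on $P_{\alpha_{i_1}}/B$ of the higher direct images, which are themselves $P_{\alpha_{i_1}}$-equivariant sheaves. First I would invoke the short exact sequence \eqref{e2} together with the adjunction/determinant identity $K_{Z(w,\underline{i})}^{-1} \,\cong\, \det T_{(w,\underline{i})} \,\cong\, \det\mathcal{R} \otimes f_1^*\det T_{P_{\alpha_{i_1}}/B} \,\cong\, \det\mathcal{R}\otimes f_1^*\mathcal{O}_1(2)$ (recalling $T_{P_{\alpha_{i_1}}/B}\cong\mathcal{O}_{\mathbb{P}^1}(2)$ has weight $\alpha_{i_1}$ as a $B$-module on the fibre). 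Restricting to a fibre $Z(u,\underline{i}')$, the line bundle $\det\mathcal{R}$ restricts to $K_{Z(u,\underline{i}')}^{-1}$, since $\mathcal{R}$ restricts to $T_{(u,\underline{i}')}$ by construction. Hence $f_{1*}K_{Z(w,\underline{i})}^{-1}$ and the higher direct image $R^1 f_{1*}K_{Z(w,\underline{i})}^{-1}$ are, as $P_{\alpha_{i_1}}$-equivariant sheaves on $P_{\alpha_{i_1}}/B\cong\mathbb{P}^1$, identified with $\mathcal{L}\big(H^{\bullet}(Z(u,\underline{i}'),K_{Z(u,\underline{i}')}^{-1})\otimes\mathbb{C}_{\alpha_{i_1}}\big)$ — the associated bundle of the $B$-module $H^{\bullet}(Z(u,\underline{i}'),K_{Z(u,\underline{i}')}^{-1})\otimes\mathbb{C}_{\alpha_{i_1}}$, where the twist by $\mathbb{C}_{\alpha_{i_1}}$ comes from the $f_1^*\mathcal{O}_1(2)$ factor relative to the $\det\mathcal{R}$-part.

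Next I would apply the Leray spectral sequence for $f_1$. Since the base $P_{\alpha_{i_1}}/B\cong\mathbb{P}^1$ is one-dimensional, only $H^0$ and $H^1$ on the base survive, so the spectral sequence degenerates into short exact sequences
$$0\longrightarrow H^1\big(P_{\alpha_{i_1}}/B, R^{j-1}f_{1*}K_{Z(w,\underline{i})}^{-1}\big)\longrightarrow H^j\big(Z(w,\underline{i}),K_{Z(w,\underline{i})}^{-1}\big)\longrightarrow H^0\big(P_{\alpha_{i_1}}/B, R^{j}f_{1*}K_{Z(w,\underline{i})}^{-1}\big)\longrightarrow 0$$
for every $j\ge 0$. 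For $j=0$ this gives precisely statement (i) once we rewrite $H^0(P_{\alpha_{i_1}}/B,-)$ of the associated bundle as $H^0(s_{i_1},-)$ in the notation of the excerpt; for general $j$, substituting the identification of $R^{j}f_{1*}$ above yields statement (ii). One subtlety I would be careful about: the fibration $f_1$ is not a product, so identifying $R^jf_{1*}K_{Z(w,\underline{i})}^{-1}$ with an associated bundle requires knowing the $B$-equivariant structure on the fibre cohomology is the right one — this is where I would cite the standard base-change/equivariance machinery (as used for $\mathcal{O}_r(V)$ earlier in Section \ref{Sec2}, via \cite{Jan} and \cite{BK05}), noting that $Z(w,\underline{i})=P_{\alpha_{i_1}}\times_B Z(u,\underline{i}')$ is literally an associated fibre bundle, so cohomology along fibres sheafifies to the associated bundle of the $B$-module fibre cohomology.

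The main obstacle — really the only non-formal point — is pinning down the $\mathbb{C}_{\alpha_{i_1}}$ twist correctly, i.e. verifying that the $B$-module structure on the fibre contribution of $K_{Z(w,\underline{i})}^{-1}$ is $H^{\bullet}(Z(u,\underline{i}'),K_{Z(u,\underline{i}')}^{-1})\otimes\mathbb{C}_{\alpha_{i_1}}$ and not some other character shift. I would nail this by working in a $B$-stable affine neighbourhood of the base point $eB\in P_{\alpha_{i_1}}/B$ where the fibre $Z(u,\underline{i}')$ sits $B$-equivariantly, computing the weight of the relative canonical bundle $K_{f_1}=\det\mathcal{R}^{\vee}$ against the $B_{\alpha_{i_1}}$-action on the fibre direction of $T_{P_{\alpha_{i_1}}/B}$, whose weight is $\alpha_{i_1}$; dualizing and tensoring the fibre-cohomology computation then produces exactly the claimed $\mathbb{C}_{\alpha_{i_1}}$ factor. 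Once that weight bookkeeping is settled, everything else is the formal degeneration of the Leray spectral sequence for a $\mathbb{P}^1$-bundle, so I would present the argument in the order: (a) anti-canonical decomposition via \eqref{e2}; (b) identification of the direct images as associated bundles with the $\alpha_{i_1}$-twist; (c) Leray degeneration and extraction of (i) and (ii).
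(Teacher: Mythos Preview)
Your proposal is correct and follows essentially the same route as the paper: decompose $K_{Z(w,\underline{i})}^{-1}$ via the exact sequence \eqref{e2} as $\bigwedge^{r-1}\mathcal{R}\otimes f_1^*T_{P_{\alpha_{i_1}}/B}$, identify the higher direct images $R^j f_{1*}$ as associated bundles on $P_{\alpha_{i_1}}/B$ (the paper does this by first pulling out the $T_{P_{\alpha_{i_1}}/B}$ factor via the projection formula and then citing \cite[III, Corollary~12.9]{Har}, which is exactly your equivariant base-change step), and then read off (i) and (ii) from the degeneration of the Leray spectral sequence over the one-dimensional base. Your extra care with the $\mathbb{C}_{\alpha_{i_1}}$ twist is precisely the content of the paper's identification $T_{P_{\alpha_{i_1}}/B}=\mathcal{L}(\alpha_{i_1})$, so nothing is missing.
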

	
	\begin{proof}
		{}From \eqref{e2} we have $K_{Z(w,\underline{i})}^{-1}\,=\,\bigwedge^{r}T_{(w, \underline{i})}\,=\,
		\bigwedge^{r-1} \mathcal{R}\otimes {f_{1}}^*T_{P_{\alpha_{i_{1}}}/B}.$ By the projection formula
		(see \cite[Chapter III, p.~253, Exercise 8.3]{Har}),
$$R^{j}{f_{1}}_{*}{K_{Z(w,\underline{i})}^{-1}}\,=\, R^{j}{f_{1}}_{*}{\wedge^{r-1}}\mathcal{R}\otimes T_{P_{\alpha_{i_{1}}}/B}.$$
Further, since $f_{1}$ is a $P_{\alpha_{i_{1}}}$-equivariant projective
morphism and the base is $P_{\alpha_{i_{1}}}/B\,\simeq\, \mathbb{P}^{1},$ we have
$R^{j}{f_{1}}_{*}{\wedge^{r-1}}\mathcal{R}\,=\,\mathcal{L}_{P_{\alpha_{i_{1}}}/B}(H^j(Z(u,\,\underline{i}')),
\,K_{Z(u,\underline{i}')}^{-1} )$ (see \cite[Chapter-III, p. 288, Corollary 12.9]{Har}).
		
		The $E_2$ term of the Leray spectral sequence for $f_{1}$ and $K_{Z(w,\underline{i})}^{-1}$ on $Z(w,\,\underline{i})$ is given by 
		$$E_{2}^{p,q}\,=\, H^{p}(P_{\alpha_{i_{1}}}/B,\, R^{q}{f_{1}}_{*}(K_{Z(w,\underline{i})}^{-1}))\,\simeq\, H^{p}(P_{\alpha_{i_{1}}}/B,\,
		R^{q}{f_{1}}_{*}(\wedge^{r-1}\mathcal{R})\otimes T_{P_{\alpha_{i_{1}}}/B} ),$$
		where $p,\, q$ are non-negative integers. Now since the base
is $P_{\alpha_{i_{1}}}/B\,\simeq\,\mathbb{P}^{1},$ by \cite[p. 57, Section 4.1, Eq (4)]{Jan} the
lemma follows.
\end{proof}	
	
	\section{Structure of the $B$-module ${\rm H}^0(Z(w,\,\underline{i}),
\, K_{Z(w,\underline{i})}^{-1})$}\label{Sec4}

Let $w\,=\,s_{i_1}s_{i_2}\cdots s_{i_{r}}$ and $\underline{i}
\,=\,(i_{1},\,i_2,\,\cdots,\,i_{r})$ be the tuple associated to the reduced expression of $w.$
	In this section we study the structure of the $B$-module $H^0(Z(w,\,\underline{i}),\, K_{Z(w,\underline{i})}^{-1}).$ We give a necessary 
	and sufficient condition for the anti-canonical line bundle on $Z(c,\,\underline{i})$ to be globally generated, where $c$ is a Coxeter element 
	and $\underline{i}$ is the tuple associated to a reduced expression of $c.$

For a $T$-module $V,$ and $\mu\,\in\, X(T),$ define $$V_{\mu}\,:=\,\{v\in V\,
\big\vert\,\, t\cdot v\,=\,\mu(t)v\, {\rm ~ for ~all~}\, t\,\in\, T\}.$$
		
	For $v\,\in\, W,$ define $R^{+}(v)\,:=\,\{\beta\in R^{+}\,\,\big\vert\,\, v(\beta)<0 \}.$ Let $w\,=\,s_{i_{1}}\cdots s_{i_{r}}$ be a reduced expression 
	and $\underline{i}\,=\,(i_{1},\,\cdots,\, i_{r}).$ Define
	\begin{equation}\label{e3}
		\lambda_{w}\,:=\,-\sum\limits_{\beta \in R^{+}(w^{-1})}^{} \beta.
	\end{equation}
	
	\begin{lemma}\label{lem4.1} The element $\lambda_{w}$ defined in \eqref{e3} satisfies $\lambda_{w}\,=\,w\cdot 0.$ 
	\end{lemma}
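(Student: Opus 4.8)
The plan is to reduce the statement to the classical identity that expresses the sum over an inversion set as a difference of two copies of $\rho$, and then to read off $w\cdot 0$ directly from the definition of the dot action. Precisely, I would first establish, for every $v\in W$, the identity
$$\sum_{\beta\in R^{+}(v)}\beta\,=\,\rho-v^{-1}(\rho).$$
Granting this, applying it with $v=w^{-1}$ gives $\sum_{\beta\in R^{+}(w^{-1})}\beta=\rho-w(\rho)$, and hence $\lambda_{w}=-(\rho-w(\rho))=w(\rho)-\rho$, which is exactly $w\cdot 0$ by the definition $w\cdot\mu=w(\mu+\rho)-\rho$.

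To prove the displayed identity I would induct on $\ell(v)$. The base case $v=e$ is immediate since $R^{+}(e)=\emptyset$. For the inductive step, write $v=s_{i}u$ with $\ell(v)=\ell(u)+1$, so that $u^{-1}(\alpha_{i})\in R^{+}$. The combinatorial core is the disjoint decomposition
$$R^{+}(v)\,=\,\{u^{-1}(\alpha_{i})\}\,\sqcup\, R^{+}(u),$$
which one checks as follows: $v\bigl(u^{-1}(\alpha_{i})\bigr)=s_{i}(\alpha_{i})=-\alpha_{i}<0$ while $u\bigl(u^{-1}(\alpha_{i})\bigr)=\alpha_{i}>0$, so $u^{-1}(\alpha_{i})\in R^{+}(v)\setminus R^{+}(u)$; and for any other $\beta\in R^{+}$ one has $u(\beta)\neq\pm\alpha_{i}$, so $s_{i}$ does not change the sign of $u(\beta)$, whence $\beta\in R^{+}(v)\iff\beta\in R^{+}(u)$. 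Summing over this decomposition and applying the inductive hypothesis to $u$ yields $\sum_{\beta\in R^{+}(v)}\beta=u^{-1}(\alpha_{i})+\rho-u^{-1}(\rho)$. On the other hand, using $\langle\rho,\alpha_{i}\rangle=1$ (valid since $\rho$ is the sum of the fundamental weights), $v^{-1}(\rho)=u^{-1}s_{i}(\rho)=u^{-1}(\rho-\alpha_{i})=u^{-1}(\rho)-u^{-1}(\alpha_{i})$, so $\rho-v^{-1}(\rho)=\rho-u^{-1}(\rho)+u^{-1}(\alpha_{i})$, which matches.

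There is no genuine obstacle here; the displayed identity is entirely standard and could alternatively just be quoted from the literature on reflection groups, after observing that its usual index set $\{\beta\in R^{+}\mid v(\beta)<0\}$ is precisely $R^{+}(v)$ in the notation of the text. The only points demanding care are bookkeeping ones: the direction in which one factors off a simple reflection, the placement of the inverse in $R^{+}(w^{-1})$ versus $R^{+}(w)$, and the membership and disjointness claims in the decomposition above.
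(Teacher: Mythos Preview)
Your proof is correct. The identity $\sum_{\beta\in R^{+}(v)}\beta=\rho-v^{-1}(\rho)$ is exactly what is needed, and your inductive argument via the decomposition $R^{+}(s_{i}u)=\{u^{-1}(\alpha_{i})\}\sqcup R^{+}(u)$ is clean and complete.

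The paper, however, argues differently: rather than inducting on length, it computes $w(\rho)-\rho$ directly by writing $\rho=\tfrac{1}{2}\sum_{\beta\in R^{+}}\beta$, splitting $R^{+}$ as $R^{+}(w)\sqcup\bigl(R^{+}\setminus R^{+}(w)\bigr)$, and observing that the sum over $R^{+}\cap w(R^{+})$ together with the sum over $R^{+}\setminus w(R^{+})$ can be rewritten so that the two expressions for $\rho$ cancel except on the inversion set, yielding $w(\rho)-\rho=\sum_{\beta\in R^{+}(w)}w(\beta)=-\sum_{\beta\in R^{+}(w^{-1})}\beta$. This is a one-shot partition argument with no induction. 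Your approach has the advantage of being entirely elementary and self-contained, relying only on the standard length-one inversion-set recursion and $s_{i}(\rho)=\rho-\alpha_{i}$; the paper's approach is shorter once one is comfortable with the bijection $R^{+}(w)\to R^{+}(w^{-1})$, $\beta\mapsto -w(\beta)$, but that step is left implicit there. Either route is perfectly acceptable for such a classical fact.
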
 
	
	\begin{proof}
		Note that $$w(\rho)-\rho\,=\,\frac{1}{2}(\sum\limits_{\beta\in R^{+}\setminus R^{+}(w)}w(\beta) +
		\sum\limits_{\beta \in R^{+}(w)} w(\beta))- \frac{1}{2}(\sum\limits_{\beta'\in R^{+}\cap w(R^{+})}\beta' +
		\sum\limits_{\beta' \in R^{+}\setminus w(R^{+})} \beta').$$ But
		$\frac{1}{2}(\sum\limits_{\beta'\in R^{+}\cap w(R^{+})}\beta' +\sum\limits_{\beta' \in R^{+}\setminus w(R^{+})} \beta')
		\,=\,\frac{1}{2}(\sum\limits_{\beta\in R^{+}\setminus R^{+}(w)}w(\beta) -\sum\limits_{\beta \in R^{+}(w)} w(\beta)).$ 
		Therefore, we have
		$$w(\rho)-\rho= \sum\limits_{\beta \in R^{+}(w)} w(\beta)\,=\,- \sum\limits_{\beta \in R^{+}(w^{-1})} \beta.$$ Consequently,
		$\lambda_{w}\,=\,w\cdot 0.$ 
	\end{proof}	
	
	\begin{lemma}\label{lem 1.2}
		Let $w\,\in\, W$ and $\alpha\,\in\, S$ be such that $s_{\alpha}w \,<\, w.$
		Let $v\,=\,s_{\alpha}w.$ Then $$\langle \lambda_{v},\, \alpha \rangle \,\ge\, 0.$$
	\end{lemma}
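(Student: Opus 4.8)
The plan is to reduce the statement to the dot-action description of $\lambda_v$ from Lemma \ref{lem4.1} and then to a single elementary fact about $\rho$ and positive roots.

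First I would rewrite the left-hand side. By Lemma \ref{lem4.1} applied to $v$ we have $\lambda_v\,=\,v\cdot 0\,=\,v(\rho)-\rho$. Pairing with the simple root $\alpha$, using that $\langle\cdot,\cdot\rangle$ is $W$-invariant (so $\langle v(\rho),\alpha\rangle\,=\,\langle\rho,v^{-1}(\alpha)\rangle$) together with $\langle\rho,\alpha\rangle\,=\,1$, gives
$$\langle\lambda_v,\,\alpha\rangle\;=\;\langle\rho,\,v^{-1}(\alpha)\rangle-1 .$$
So it suffices to prove $\langle\rho,\,v^{-1}(\alpha)\rangle\,\ge\,1$, which in turn will follow once we know $v^{-1}(\alpha)\,\in\,R^{+}$.

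Next I would extract the sign information from the hypothesis. Since $v\,=\,s_{\alpha}w$ and $s_{\alpha}w\,<\,w$, we have $\ell(w)\,=\,\ell(v)+1$ and $w\,=\,s_{\alpha}v$; thus $s_{\alpha}v\,>\,v$. By the standard characterization of multiplication by a simple reflection in terms of lengths (for $\alpha\in S$ and $u\in W$, one has $s_{\alpha}u\,>\,u$ if and only if $u^{-1}(\alpha)\,\in\,R^{+}$), the inequality $s_{\alpha}v\,>\,v$ is equivalent to $v^{-1}(\alpha)\,\in\,R^{+}$. Finally, for any positive root $\gamma$ one has $\langle\rho,\gamma\rangle\,\ge\,1$: it is an integer since $\rho\,\in\,X(T)$ and $\gamma$ is a root, it is $\ge 0$ since $\rho$ is dominant, and it is nonzero since $(\rho,\gamma)\,>\,0$ (because $\gamma$ is a nonnegative, nonzero combination of simple roots and $(\rho,\alpha_i)\,>\,0$ for every simple root $\alpha_i$). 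Applying this with $\gamma\,=\,v^{-1}(\alpha)$ gives $\langle\rho,v^{-1}(\alpha)\rangle\,\ge\,1$, hence $\langle\lambda_v,\alpha\rangle\,\ge\,0$.

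There is essentially no real obstacle here; the only thing requiring care is the bookkeeping of which element ($v$ or $w$) the reflection $s_{\alpha}$ is multiplied into, and correspondingly whether the relevant root is $w^{-1}(\alpha)$ (negative) or $v^{-1}(\alpha)$ (positive). I would phrase the argument through the identity $w\,=\,s_{\alpha}v$ so that this sign is transparent; alternatively one could work directly from $\langle\lambda_v,\alpha\rangle\,=\,\langle s_{\alpha}w(\rho)-\rho,\alpha\rangle\,=\,-\langle\rho,w^{-1}(\alpha)\rangle-1$ and use $w^{-1}(\alpha)\in R^{-}$, which is the same computation read in the other direction.
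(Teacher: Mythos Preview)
Your proof is correct and follows essentially the same route as the paper's: use Lemma~\ref{lem4.1} to write $\lambda_v=v(\rho)-\rho$, rewrite $\langle\lambda_v,\alpha\rangle=\langle\rho,v^{-1}(\alpha)\rangle-1$, and conclude using $v^{-1}(\alpha)\in R^{+}$ together with the fact that $\rho$ is the sum of the fundamental weights. The paper is merely terser (it asserts $v^{-1}(\alpha)\in R^{+}$ and $\langle\rho,v^{-1}(\alpha)\rangle\ge 1$ without the justifications you spell out), but the argument is the same.
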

	
	\begin{proof}
		From Lemma \ref{lem4.1} it follows that $\langle\lambda_{v},\, \alpha \rangle\,=\,\langle v(\rho)-\rho,\, \alpha \rangle\,=\,
		\langle \rho,\, v^{-1}(\alpha) 
		\rangle-1.$ Since $v^{-1}(\alpha)\,\in\, R^{+},$ and  $\rho$ is the sum of all fundamental weights, we have $\langle \rho, \,v^{-1}(\alpha) \rangle-1\ge 0.$
		Thus, $\langle \lambda_{v},\, \alpha \rangle \ge 0.$
	\end{proof}
	
	\begin{lemma}\label{lem 2.2}
		Let $w\,\in\, W$ and $\lambda \,\in\, X(T)$ be such that $H^0(w,\,\lambda)\,\neq\, 0.$ Then there is a unique $B$-stable line $L$ in
		$H^0(w,\, \lambda)$, and furthermore the weight of any non-zero vector in $L$ is $w(\lambda).$
	\end{lemma}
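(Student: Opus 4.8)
The plan is to induct on $\ell(w)$ using the fibration structure and the Demazure short exact sequences recalled in Lemma \ref{lemma 1.2} and Lemma \ref{lemma1.3}. When $\ell(w)=0$ we have $w=e$ and $H^0(e,\lambda)=\mathbb{C}_\lambda$, which is a single $B$-stable line of weight $\lambda=e(\lambda)$, so the base case is immediate. For the inductive step, write $w=s_{i_1}u$ with $\ell(u)=\ell(w)-1$ and $\underline{i}'=(i_2,\ldots,i_r)$, and let $\alpha=\alpha_{i_1}$. Since $H^0(w,\lambda)\neq 0$ and $H^0(w,\lambda)=H^0(s_\alpha,H^0(u,\lambda))$ by Lemma \ref{lemma 1.2}(1) (after checking the relevant Cartan integer is $\geq 0$; if it were negative one argues that $H^0(w,\lambda)=0$ by Lemma \ref{lemma 1.2}(4) or reduces via (3), contradicting nonvanishing), we must have $H^0(u,\lambda)\neq 0$, so by the inductive hypothesis $H^0(u,\lambda)$ contains a unique $B$-stable line $L'$, and every nonzero vector of $L'$ has weight $u(\lambda)$.

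Next I would analyze $H^0(s_\alpha, M)$ as a $B$-module where $M=H^0(u,\lambda)$. The key structural input is Lemma \ref{lemma 1.4}: decompose $M$ as a direct sum of indecomposable $B_\alpha$-modules, each of the form $V'\otimes\mathbb{C}_\mu$ with $V'$ an irreducible $L_\alpha$-module. Applying $H^0(P_\alpha/B,-)$ commutes with this decomposition, and by Lemma \ref{lemma1.3} each summand $V'\otimes\mathbb{C}_\mu$ with $\langle\mu,\alpha\rangle\geq 0$ contributes $V'\otimes H^0(L_\alpha/B_\alpha,\mathbb{C}_\mu)$, whose lowest weight (in the $\leq$ order restricted to the $\alpha$-direction) is the lowest weight of $V'$ plus the lowest weight $s_\alpha(\mu)$ of $H^0(L_\alpha/B_\alpha,\mathbb{C}_\mu)$. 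The $B$-stable lines of $H^0(s_\alpha,M)$ are precisely the lines spanned by these lowest weight vectors. The point is that among all indecomposable summands of $M$, the unique one whose lowest weight equals the global minimum weight $u(\lambda)$ of $M$ is the summand containing $L'$; this is where the uniqueness in the inductive hypothesis is used. That summand produces exactly one $B$-stable line in $H^0(s_\alpha,M)$ of weight $s_\alpha(u(\lambda))=w(\lambda)$, and any other $B$-stable line has strictly larger weight, hence cannot coincide with it — so the $B$-stable line of minimal weight is unique. One still must rule out that two \emph{distinct} summands give two $B$-stable lines of the \emph{same} weight $w(\lambda)$; this follows because $w(\lambda)$ is the minimal weight of the whole module $H^0(w,\lambda)$ (a fact one verifies in parallel, e.g.\ using that $w(\lambda)\leq\nu$ for all weights $\nu$, which is essentially the content being proved together with Proposition \ref{prop2}), and minimality pins down the weight to a single summand.

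The main obstacle I anticipate is the bookkeeping in the last step: showing that \emph{exactly one} $B$-stable line of $H^0(s_\alpha,M)$ survives as the unique minimal-weight line, rather than several lines of the same weight coming from different indecomposable $B_\alpha$-summands of $M$. To handle this cleanly I would prove simultaneously (strengthening the induction hypothesis) the two assertions "there is a unique $B$-stable line $L$" and "every weight $\nu$ of $H^0(w,\lambda)$ satisfies $\nu\geq w(\lambda)$" — which is exactly Proposition \ref{prop2} in the line-bundle case. With both statements carried along the induction, the minimality clause forces any $B$-stable line to have weight $w(\lambda)$, and then the explicit description of lowest weight vectors via Lemma \ref{lemma1.3} shows there is only one such line. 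The remaining routine point is to dispose of the case $\langle\lambda,\alpha\rangle<0$ at the top of the induction, which is handled by Lemma \ref{lemma 1.2}(3)--(4) and the hypothesis $H^0(w,\lambda)\neq 0$.
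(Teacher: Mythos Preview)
Your approach differs from the paper's in two essential ways. The paper inducts not on $\ell(w)$ but on $\ell(v)$, where $v$ is a \emph{minimal} element with $v(\lambda)$ dominant; and rather than analysing $H^0(s_\alpha,H^0(u,\lambda))$ from the inside, it produces an \emph{embedding} $H^0(w,\lambda)\hookrightarrow H^0(ws_\alpha,s_\alpha(\lambda))$ via the short exact sequence $0\to\mathbb{C}_\lambda\to H^0(s_\alpha,s_\alpha(\lambda))\to Q\to 0$ and the right factorization (using \cite[p.~110, Theorem~3.3(i)]{BKS} to see that $H^0(w,\lambda)\neq 0$ forces $\ell(ws_\alpha)=\ell(w)+1$). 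After finitely many such steps one lands in $H^0(wv^{-1},v(\lambda))$ with $v(\lambda)$ dominant, where the simple $B$-socle of weight $wv^{-1}(v(\lambda))=w(\lambda)$ is classical; since any nonzero $B$-submodule of a module with simple $B$-socle inherits that same socle, the lemma follows immediately.

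Your inductive step, by contrast, has a genuine gap that the proposed strengthening does not close. The assertion that ``the $B$-stable lines of $H^0(s_\alpha,M)$ are precisely the lines spanned by these lowest weight vectors'' is not justified: the $L_\alpha$-decomposition of $H^0(s_\alpha,M)$ arising from the $B_\alpha$-decomposition of $M$ is \emph{not} a decomposition of $B$-modules, because the root subgroups $U_{-\gamma}$ with $\gamma\neq\alpha$ (contained in the unipotent radical of $P_\alpha$) mix the $L_\alpha$-summands. At best a $B$-stable line, being $B_\alpha$-stable, lies in the span of the $L_\alpha$-lowest weight vectors, but nothing prevents it from being a nontrivial combination of several of them. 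More seriously, even granting the strengthened hypothesis that every weight of $H^0(w,\lambda)$ is $\ge w(\lambda)$, it does \emph{not} follow that every $B$-stable line has the minimal weight: already in the $B$-module $\mathbb{C}_\mu\oplus\mathbb{C}_\nu$ with $\mu>\nu$ both summands are $B$-stable lines. So ``minimality pins down the weight to a single summand'' is precisely the uniqueness statement you are trying to prove, and the argument is circular at that point. (A minor separate issue: Lemma~\ref{lemma 1.2}(1) concerns the right factorization $w=vs_\alpha$, not the left one $w=s_\alpha u$ you use; the identity $H^0(w,\lambda)=H^0(s_\alpha,H^0(u,\lambda))$ is correct but comes from the Leray spectral sequence as in Lemma~\ref{lem3.1}, with no condition on $\langle\lambda,\alpha\rangle$.) If you want a direct route avoiding the reduction to dominant weights, the open-orbit argument of Proposition~\ref{prop4.0} gives uniqueness in one stroke.
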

	
	\begin{proof}
		Let $v\,\in\, W$ be a minimal element such that $v(\lambda)$ is dominant. We will prove the lemma using induction on $\ell(v).$
		
		If $\ell(v)\,=\,1,$ then $v\,=\,s_{\alpha}$ for some $\alpha\,\in\, S.$
Also, since $H^0(w,\, \lambda)\,\neq\, 0$ and $s_{\alpha}(\lambda)$
is dominant, using \cite[p.~110, Theorem 3.3(i)]{BKS}, we have $R^{+}(w)\cap R^{+}(v)\,=\,\emptyset.$ Thus, we have
$\ell(ws_{\alpha})\,=\,\ell(w)+1.$
		Consider the short exact sequence of $B$-modules
		\begin{equation}\label{bm1}
			0\,\longrightarrow\,\mathbb{C}_{\lambda}\,\longrightarrow\, H^0(s_{\alpha},\, s_{\alpha}(\lambda))\,\longrightarrow\, Q
			\,\longrightarrow \,0.
		\end{equation}
		Using it we have $$H^0(w, \,\lambda)\,\subseteq\, H^0(w,\,H^0(s_{\alpha},\, s_{\alpha}(\lambda))).$$
		Since $\ell(ws_{\alpha})\,=\,\ell(w)+1,$ using Lemma \ref{lemma 1.2}(1) we have
		$$H^0(w,\,H^0(s_{\alpha}, \,s_{\alpha}(\lambda)))\,=\,H^0(ws_{\alpha},\, s_{\alpha}(\lambda)).$$
		As $s_{\alpha}(\lambda)$ is dominant, there is a unique $B$-stable line $L$ in
		$H^0(ws_{\alpha}, \,s_{\alpha}(\lambda)),$ and the weight of any non-zero vector in $L$ is $w(\lambda).$
		Further, as $H^0(w,\, \lambda)$ is a non-zero submodule of $H^0(ws_{\alpha},\, s_{\alpha}(\lambda)),$ and $L$ is a unique $B$-stable
		line of $H^0(ws_{\alpha},\, s_{\alpha}(\lambda)),$
		this $L$ is a unique $B$-stable line in $H^0(w,\, \lambda)$ such that the weight of any non-zero vector in $L$ is $w(\lambda).$
		So, the base case is done.
		
Now assume that $\ell(v)\,>\,1.$ Then there is an element $\alpha\,\in\, S$ such that $\ell(v)\,=\,\ell(vs_{\alpha})+1.$ 
Since $v(\alpha)\,\in\,
R^{-},$ and $v$ is minimal such that $v(\lambda)$ is dominant, we have 
$\langle \lambda, \, \alpha \rangle=\, \langle v(\lambda), \, v(\alpha) \rangle\,<\,0.$ Thus, $\langle s_{\alpha}(\lambda),\,
\alpha\rangle>\,\,0.$ Therefore, using \eqref{bm1} together 
		with the argument as in the above paragraph we conclude that
		$$H^0(w, \,\lambda)\,\subseteq\, H^0(ws_{\alpha},\, s_{\alpha}(\lambda)).$$
		Therefore, by induction hypothesis there is a unique $B$-stable line $$L\, \subset\, H^0(ws_{\alpha},\, s_{\alpha}(\lambda))$$ such that the weight of 
		any non-zero vector in $L$ is $w(\lambda).$ Therefore, $L$ is the unique 
		$B$-stable line bundle in $H^0(w,\,\lambda),$ and the weight of any non-zero vector in $L$ is $w(\lambda).$
	\end{proof}
	
	\begin{lemma}\label{lem4.4}
	 Every weight $\mu$ of $H^0(Z(w,\,\underline{i}),\, K_{Z(w,\underline{i})}^{-1})$
		satisfies the condition $\mu \,\ge\, w\cdot 0.$
	\end{lemma}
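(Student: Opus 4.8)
The plan is to argue by induction on $r = \ell(w)$, using the inductive structure provided by Lemma \ref{lem3.1}. Write $w = s_{i_1}u$ with $u = s_{i_2}\cdots s_{i_r}$ and $\underline{i}' = (i_2,\ldots,i_r)$, so that $\ell(u) = \ell(w)-1$. The base case $r=0$ (or $r=1$) is a direct check: $Z(w,\underline{i})$ is a point (resp.\ $\mathbb{P}^1$), $K^{-1}$ has weights that are easily computed, and $w\cdot 0 = \lambda_w$ is the obvious lower bound. For the inductive step, Lemma \ref{lem3.1}(i) gives
$$H^0(Z(w,\underline{i}),\, K_{Z(w,\underline{i})}^{-1}) = H^0\bigl(s_{i_1},\, H^0(Z(u,\underline{i}'),\, K_{Z(u,\underline{i}')}^{-1})\otimes \mathbb{C}_{\alpha_{i_1}}\bigr).$$
By the induction hypothesis, every weight $\mu'$ of $M := H^0(Z(u,\underline{i}'),\, K_{Z(u,\underline{i}')}^{-1})$ satisfies $\mu' \ge u\cdot 0 = \lambda_u$. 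Hence every weight of $M\otimes \mathbb{C}_{\alpha_{i_1}}$ is $\ge \lambda_u + \alpha_{i_1}$.

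The next step is to control how taking $H^0(s_{i_1}, -)$ affects the weights. Since $L_{\alpha_{i_1}}/B_{\alpha_{i_1}} \simeq P_{\alpha_{i_1}}/B \simeq \mathbb{P}^1$, I would decompose the $B_{\alpha_{i_1}}$-module $M\otimes\mathbb{C}_{\alpha_{i_1}}$ into indecomposable summands; by Lemma \ref{lemma 1.4}, each summand has the form $V'\otimes \mathbb{C}_{\nu}$ with $V'$ an irreducible $L_{\alpha_{i_1}}$-module and $\nu$ a character, and $H^0(s_{i_1}, V'\otimes\mathbb{C}_\nu)$ is computed by Lemma \ref{lemma1.3}. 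The key point is that passing from a $B_{\alpha_{i_1}}$-module to its $H^0(s_{i_1}, -)$ over $\mathbb{P}^1$ can only \emph{raise} weights with respect to the order induced by $\alpha_{i_1}$: concretely, using the composition series of Lemma \ref{lemma 1.1}(3), the weights of $H^0(s_{i_1}, \mathbb{C}_\nu)$ (when $\langle\nu,\alpha_{i_1}\rangle \ge 0$) lie between $s_{i_1}(\nu)$ and $\nu$, all of which are $\ge s_{i_1}(\nu)$; tensoring with an irreducible $L_{\alpha_{i_1}}$-module only adds weights that are sums of the lowest weight with nonnegative multiples of $\alpha_{i_1}$. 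So every weight $\mu$ of $H^0(Z(w,\underline{i}),K^{-1})$ satisfies $\mu \ge s_{i_1}(\mu'')$ for some weight $\mu''$ of $M\otimes\mathbb{C}_{\alpha_{i_1}}$, hence $\mu \ge s_{i_1}(\lambda_u + \alpha_{i_1})$ — but here one must be slightly careful, since $s_{i_1}$ need not be order-preserving on all of $X(T)$, only on the $\alpha_{i_1}$-string directions, so the estimate should be set up so that only $\alpha_{i_1}$-adjustments are reflected by $s_{i_1}$.

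Finally I would identify $s_{i_1}(\lambda_u + \alpha_{i_1})$ with $w\cdot 0$. Using Lemma \ref{lem4.1}, $\lambda_u = u\cdot 0 = u(\rho) - \rho$, so $\lambda_u + \alpha_{i_1} = u(\rho) - \rho + \alpha_{i_1} = u(\rho) - s_{i_1}(\rho)$ (since $s_{i_1}(\rho) = \rho - \alpha_{i_1}$); applying $s_{i_1}$ gives $s_{i_1}u(\rho) - \rho = w(\rho) - \rho = w\cdot 0 = \lambda_w$. Thus $\mu \ge w\cdot 0$, completing the induction. The main obstacle I anticipate is the second step: making the monotonicity statement ``forming $H^0$ over the $\mathbb{P}^1$ fiber does not decrease weights'' precise and rigorous, in particular handling the indecomposable-but-not-irreducible $B_{\alpha_{i_1}}$-summands via Lemma \ref{lemma 1.4} and tracking that the only nontrivial weight shift is in the $s_{i_1}(\nu)$ direction, so that the subsequent identification with $w\cdot 0$ goes through cleanly. (Note: this is essentially the argument already embedded in the proof of Lemma \ref{lem 2.2}, so a cleaner alternative is to reduce directly to Lemma \ref{lem 2.2} and Proposition \ref{prop2} applied to $\mathcal{L} = K^{-1}_{Z(w,\underline{i})}$ once one knows $H^0 \neq 0$, which holds since $K^{-1}$ is effective by Corollary \ref{cor2.3}, and then only needs to check that the unique $B$-stable line has weight exactly $w\cdot 0$.)
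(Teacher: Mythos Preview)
Your main inductive strategy has the gap you yourself flag: after applying $H^0(s_{i_1},-)$ to the $B_{\alpha_{i_1}}$-module $M\otimes\mathbb{C}_{\alpha_{i_1}}$, the lowest weight of each contributing summand $V'\otimes\mathbb{C}_\nu$ becomes $s_{i_1}(\chi+\nu)$, where $\chi+\nu$ is the \emph{highest} weight of that summand. From the induction hypothesis you only know $\chi+\nu \ge \lambda_u+\alpha_{i_1}$, and since $s_{i_1}$ is not order-preserving this does \emph{not} give $s_{i_1}(\chi+\nu)\ge s_{i_1}(\lambda_u+\alpha_{i_1})=\lambda_w$. Concretely, writing $\chi+\nu-(\lambda_u+\alpha_{i_1})=\sum_j c_j\alpha_j$ with $c_j\ge 0$, the $\alpha_{i_1}$-coefficient of $s_{i_1}(\chi+\nu)-\lambda_w$ is $\sum_{j\ne i_1}c_j(-\langle\alpha_j,\alpha_{i_1}\rangle)-c_{i_1}$, which can be negative. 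No amount of care in ``setting up'' the estimate fixes this without additional input beyond the bare weight inequality; one would need to control $\langle\cdot,\alpha_{i_1}\rangle$ on the summand tops, which the induction hypothesis does not supply.

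Your parenthetical alternative---combine Corollary~\ref{cor2.3} (so $H^0\neq 0$), Proposition~\ref{prop4.0} (unique $B$-stable line, minimal weight), and the computation in Proposition~\ref{prop4.5} that this line has weight $w\cdot 0$---is correct and non-circular, since neither of those proofs uses Lemma~\ref{lem4.4}. This is, however, not how the paper proves Lemma~\ref{lem4.4}. The paper's argument is geometric rather than inductive: the differential of $\phi_{(w,\underline{i})}:Z(w,\underline{i})\to G/B$ embeds $T_{(w,\underline{i})}$ into $\phi_{(w,\underline{i})}^*T_{G/B}$, so $K_{Z(w,\underline{i})}^{-1}=\bigwedge^r T_{(w,\underline{i})}$ is a subsheaf of $\phi_{(w,\underline{i})}^*\bigwedge^r T_{G/B}$. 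Since $T_{G/B}=\mathcal{L}(\mathfrak{g}/\mathfrak{b})$ is filtered with graded pieces $\mathbb{C}_\beta$, $\beta\in R^+$, the bundle $\bigwedge^r T_{G/B}$ is filtered with graded pieces $\mathbb{C}_{\lambda_A}$, $\lambda_A=\sum_{\beta\in A}\beta$, over $r$-element subsets $A\subset R^+$. Hence every weight of $H^0(Z(w,\underline{i}),K^{-1})$ occurs in some $H^0(w,\lambda_A)$, and by Lemma~\ref{lem 2.2} the weights there are $\ge w(\lambda_A)$. Finally an elementary computation shows $w(\lambda_A)\ge w\cdot 0$ for every such $A$. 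This approach avoids the $\mathbb{P}^1$-induction entirely and sidesteps the order-preservation issue; your alternative via Proposition~\ref{prop4.0} is cleaner still, and indeed the paper notes that Proposition~\ref{prop4.0} subsumes Lemma~\ref{lem4.4}.
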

	
	\begin{proof}
		Recall that $\ell(w)\,=\,r.$ First consider the composition $Z(w,\,\underline{i})\,\longrightarrow\, G/B$
		of $\iota\,:\, X(w)\,\hookrightarrow\, G/B$ and
		$\phi_{(w,\underline{i})}\,:\, 
		Z(w,\,\underline{i})\,\longrightarrow\, X(w)$; for convenience, this composition map will also be
		denoted by $\phi_{(w,\underline{i})}.$ Let $$d\phi_{(w,\underline{i})}\, :\, 
		T_{(w,\underline{i})}\,\longrightarrow\, \phi_{(w,\underline{i})}^{*}T_{G/B}$$
		be the differential of $\phi_{(w,\underline{i})}$ (see \eqref{e1}); it is an injective morphism of sheaves.
		Consequently, $\bigwedge^{r}T_{(w,\underline{i})}$ is a subsheaf of $\phi_{(w,\underline{i})}^{*}(\bigwedge^{r}T_{G/B}).$
		On the other hand, the homogeneous vector bundle $T_{G/B}
\,=\,\mathcal{L}(\mathfrak{g/b})$ has a filtration of subbundles such that the successive
quotients are line bundle given by the characters which are the positive roots. Therefore, $\bigwedge^{r}T_{G/B}$ has a filtration of subbundles such that the successive quotients are line bundles given by the characters $\lambda_{A}\,=\, \sum_{\beta \in A}\beta,$ where
		$A$ runs over all subsets of $R^{+}$ having exactly $r$ number of elements.
		Therefore, every weight of $H^0(Z(w,\,\underline{i}),\, K_{Z(w,\underline{i})}^{-1})$ is a weight of $H^0(w,\, \lambda_{A})$
		for some subset $A$ of $R^{+}$ such that the cardinality of $A$ is $r$. Now, by Lemma \ref{lem 2.2}, if $H^0(w,\, \lambda_{A})
		\,\neq\, 0,$ then there is a unique $B$-stable line $L$ in $H^0(w,\, \lambda_{A})$ such that the weight of any non-zero vector in $L$
		is $w(\lambda_{A}).$
		
		It can be shown that $w(\lambda_{A})\,\ge \,w\cdot 0$ for any subset $A$ of $R^{+}$ of
		cardinality $r$. Indeed, $$w(\lambda_{A})\,=\,w(\sum_{\beta \in A\cap R^{+}(w)}\beta )+ w(\sum_{\beta \in A\setminus R^{+}(w)}\beta)$$
		$$\ge\, w(\sum_{\beta \in R^{+}(w)}\beta)+ w(\sum_{\beta \in A\setminus R^{+}(w)}\beta)
		\,\ge\, w(\sum_{\beta \in R^{+}(w)}\beta)\,=\,w\cdot0$$ for any subset $A$ of $R^{+}$ of cardinality $r$.
	\end{proof}
	
	\begin{proposition}\label{prop4.5}
	The $B$-module
		$H^0(Z(w,\, \underline{i}), \,K_{Z(w,\underline{i})}^{-1})$ has a $B$-stable line $L_{w}$ such that
		the weight of any non-zero vector $v\,\in\, L_{w}$ is $w\cdot 0.$
	\end{proposition}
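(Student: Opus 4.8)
The plan is to prove Proposition~\ref{prop4.5} by induction on $r=\ell(w)$, using the $\mathbb{P}^{1}$-fibration structure of $Z(w,\underline{i})$ encoded in Lemma~\ref{lem3.1}(i). For the base case $\ell(w)=0$ the variety $Z(w,\underline{i})$ is a point, $K_{Z(w,\underline{i})}^{-1}$ is the trivial line bundle, and $H^{0}$ is the one–dimensional trivial $B$-module, whose only weight is $0=e\cdot 0$; so there the whole space is the required line. (The inductive step below in fact also covers $\ell(w)=1$, by taking $u=e$.)

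For the inductive step I would write $w=s_{i_{1}}u$ with $u=s_{i_{2}}\cdots s_{i_{r}}$ and $\underline{i}'=(i_{2},\ldots,i_{r})$, a reduced expression of $u$ with $\ell(u)=r-1$. By the induction hypothesis there is a $B$-stable line $L_{u}\subset H^{0}(Z(u,\underline{i}'),K_{Z(u,\underline{i}')}^{-1})$ on which $B$ acts by the character $u\cdot 0$, which by Lemma~\ref{lem4.1} equals $\lambda_{u}$. Twisting, $L_{u}\otimes\mathbb{C}_{\alpha_{i_{1}}}$ is a $B$-stable line of weight $\mu:=\lambda_{u}+\alpha_{i_{1}}$ inside the $B$-module $M:=H^{0}(Z(u,\underline{i}'),K_{Z(u,\underline{i}')}^{-1})\otimes\mathbb{C}_{\alpha_{i_{1}}}$. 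The key positivity is then: since $w=s_{i_{1}}\cdots s_{i_{r}}$ is reduced we have $s_{i_{1}}w=u<w$, so Lemma~\ref{lem 1.2} (with the simple root $\alpha_{i_{1}}$) yields $\langle\lambda_{u},\alpha_{i_{1}}\rangle\ge 0$, and hence
$$\langle\mu,\alpha_{i_{1}}\rangle=\langle\lambda_{u},\alpha_{i_{1}}\rangle+\langle\alpha_{i_{1}},\alpha_{i_{1}}\rangle=\langle\lambda_{u},\alpha_{i_{1}}\rangle+2\ge 2>0.$$

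Finally, Lemma~\ref{lem3.1}(i) gives $H^{0}(Z(w,\underline{i}),K_{Z(w,\underline{i})}^{-1})=H^{0}(s_{i_{1}},M)$, and since the functor $V\mapsto H^{0}(s_{i_{1}},V)$ on $B$-modules is left exact (it is global sections on $P_{\alpha_{i_{1}}}/B$ composed with the exact functor $V\mapsto\mathcal{L}(s_{i_{1}},V)$), the inclusion $\mathbb{C}_{\mu}=L_{u}\otimes\mathbb{C}_{\alpha_{i_{1}}}\hookrightarrow M$ of $B$-modules induces a $B$-equivariant injection $H^{0}(s_{i_{1}},\mathbb{C}_{\mu})\hookrightarrow H^{0}(Z(w,\underline{i}),K_{Z(w,\underline{i})}^{-1})$. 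As $\langle\mu,\alpha_{i_{1}}\rangle\ge 2$, Lemma~\ref{lemma 1.1}(3) (equivalently Lemma~\ref{lem 2.2} with $w=s_{i_{1}}$, $\lambda=\mu$) exhibits inside $H^{0}(s_{i_{1}},\mathbb{C}_{\mu})$ a $B$-stable line of weight $s_{i_{1}}(\mu)$, and a direct computation using $s_{i_{1}}(\rho)=\rho-\alpha_{i_{1}}$ and $\lambda_{u}=u(\rho)-\rho$ gives
$$s_{i_{1}}(\mu)=s_{i_{1}}(\lambda_{u})-\alpha_{i_{1}}=\bigl(w(\rho)-\rho+\alpha_{i_{1}}\bigr)-\alpha_{i_{1}}=w(\rho)-\rho=w\cdot 0.$$
The image of this line under the injection above is the desired $B$-stable line $L_{w}$ of weight $w\cdot 0$.

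The two steps carrying the actual content are the ones flagged above: the inequality $\langle\lambda_{u},\alpha_{i_{1}}\rangle\ge 0$, which comes from $s_{i_{1}}w<w$ and puts us in the favourable case of Lemma~\ref{lemma 1.1}, and the identity $s_{i_{1}}(\lambda_{u}+\alpha_{i_{1}})=w\cdot 0$. Everything else—tracking $B$-linearizations and the left-exactness of $V\mapsto H^{0}(s_{i_{1}},V)$—is routine, since Lemma~\ref{lem3.1} already transports the linearizations. The only mild obstacle I anticipate is bookkeeping the twist correctly (by $\mathbb{C}_{\alpha_{i_{1}}}$, not $\mathbb{C}_{-\alpha_{i_{1}}}$) so that the lowest weight emerges as exactly $w\cdot 0$; combined afterwards with Lemma~\ref{lem4.4} this $L_{w}$ is moreover the unique lowest-weight line.
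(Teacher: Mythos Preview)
Your proof is correct and follows essentially the same inductive strategy as the paper: peel off $s_{i_{1}}$, use Lemma~\ref{lem3.1}(i) to identify $H^{0}(Z(w,\underline{i}),K^{-1})$ with $H^{0}(s_{i_{1}},M)$, invoke Lemma~\ref{lem 1.2} to get $\langle\lambda_{u}+\alpha_{i_{1}},\alpha_{i_{1}}\rangle\ge 2$, and then locate the $B$-stable line of weight $s_{i_{1}}(\lambda_{u}+\alpha_{i_{1}})=w\cdot 0$. The only cosmetic differences are that you start induction at $\ell(w)=0$ rather than $1$, and you make the left-exactness of $V\mapsto H^{0}(s_{i_{1}},V)$ explicit where the paper appeals to Lemma~\ref{lemma1.3}(2); if anything your version is cleaner on that point.
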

	
	\begin{proof} 
		We prove by induction on $\ell(w).$ If $\ell(w)\,=\,1,$ then $w\,=\,s_{i_{1}}.$ So, we have $\lambda_{w}\,=\,-\alpha_{i_{1}},$ and ${H^0(Z(w, 
			\,\underline{i}),\,K_{Z(w,\underline{i})}^{-1})}_{\lambda_{w}}\,\neq\, 0.$
		
		Now assume that $\ell(w)\,>\,1.$ Take $v\,=\,s_{i_{1}}w\,=\,s_{i_{2}}\cdots s_{i_{r}}.$ By the induction hypothesis we have ${H^0(Z(v,\, 
			\underline{i}'),\,K_{Z(v,\underline{i}')}^{-1})}_{\lambda_{v}}\,\neq\, 0,$ where $\underline{i}'\,=\,(i_{2},\,\cdots ,\,i_{r}).$
		Moreover, $\lambda_{v}$ is the lowest weight of $H^0(Z(v,\, \underline{i}'),\,K_{Z(v, \underline{i}')}^{-1}).$
		
		Now by Lemma \ref{lem3.1}, we have $H^0(Z(w,\, \underline{i}),\,K_{Z(w, \underline{i})}^{-1})\,=\, H^0(s_{i_{1}},\,H^0(Z(v, \,\underline{i}'),
		\,K_{Z(v,\underline{i}')}^{-1})\otimes \mathbb{C}_{\alpha_{i_{1}}}).$ Then $\lambda_{v}+\alpha_{i_{1}}$ is the lowest weight of $H^0(Z(v,\, 
		\underline{i}'),\,K_{Z(v, \underline{i}')}^{-1})\otimes \mathbb{C}_{\alpha_{i_{1}}}.$
		Further, by Lemma \ref{lem 1.2}, we have $$\langle \lambda_{v}+\alpha_{i_{1}},\, \alpha_{i_{1}} \rangle \,\ge\, 2.$$ Therefore, from Lemma 
		\ref{lemma1.3}(2) it follows that $$H^0(s_{i_{1}},\, H^0(Z(v,\, \underline{i}'),\,K_{Z(v, \underline{i}')}^{-1})\otimes 
		\mathbb{C}_{\alpha_{i_{1}}})_{s_{i_{1}}(\lambda_{v}+\alpha_{i_{1}})}\,\neq\, 0.$$ 
By Lemma \ref{lem4.1}, we have $\lambda_{v}\,=\,
v\cdot 0.$ Therefore, by using $v\cdot 0\,=\,v(\rho)-\rho$, we have $\,s_{i_{1}}(\lambda_{v}+
\alpha_{i_{1}})\,=\,\lambda_{w}.$
	\end{proof}

The more general statement of the Lemma \ref{lem4.4} and Proposition 
\ref{prop4.5} with simple and elegant proof of Proposition \ref{prop4.0} is suggested by the 
referee.

\begin{proposition}\label{prop4.0}
Let $\mathcal{L}$ be a $B$-linearized line bundle on $Z(w,\,\underline{i})$ such
that $H^0(Z(w,\,\underline{i}),\,\mathcal{L})\,\neq\, 0.$ Then the $B$-module
$H^0(Z(w,\,\underline{i}),\,\mathcal{L})$ has a unique $B$-stable line $L_{\mu}$ satisfying
the condition that the
weight of any non-zero vector $v\,\in\, L_{\mu}$ is $\mu.$ Moreover, every $\nu\,\in\, X(T)$
such that $H^0(Z(w,\,\underline{i}),\,\mathcal{L})_{\nu}\,\neq\, 0$ satisfies $\nu\,\ge\, \mu.$
\end{proposition}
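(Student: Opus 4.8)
The plan is to work on the open dense $B$-orbit of $Z(w,\,\underline{i})$. Recall from the proof of Corollary \ref{cor2.3} that the complement $U_{w}\,:=\,Z(w,\,\underline{i})\setminus\bigcup_{j=1}^{r}X_{j}$ is a $B$-stable open set isomorphic, as a variety, to the affine $r$-space $\prod_{j=1}^{r}U_{\alpha_{i_{j}}}\,\cong\,\mathbb{A}^{r}$; moreover $U_{w}\,=\,\phi_{(w,\underline{i})}^{-1}(BwB/B)$ and $\phi_{(w,\underline{i})}$ restricts to an isomorphism of $U_{w}$ onto the big Schubert cell $BwB/B$, which is a single $B$-orbit. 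I would first record the two facts this gives: since $Z(w,\,\underline{i})$ is integral and $U_{w}$ is dense, the restriction map
$$H^{0}(Z(w,\,\underline{i}),\,\mathcal{L})\,\longrightarrow\, H^{0}(U_{w},\,\mathcal{L}|_{U_{w}})$$
is an injective $B$-equivariant homomorphism; and the units of the coordinate ring $\mathbb{C}[U_{w}]\,=\,\mathbb{C}[\mathbb{A}^{r}]$ are precisely the nonzero constants. Existence of a $B$-stable line $L_{\mu}$ is then immediate from the Lie--Kolchin theorem, since $H^{0}(Z(w,\,\underline{i}),\,\mathcal{L})$ is a nonzero finite-dimensional module over the connected solvable group $B$; the character through which $B$ acts on $L_{\mu}$ restricts on $T$ to a weight $\mu\in X(T)$, which is the weight of every nonzero vector of $L_{\mu}$.

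For uniqueness, the observation I would make is that a nonzero $B$-semi-invariant section $\sigma$ of $\mathcal{L}$ has no zero on $U_{w}$: its zero scheme is $B$-stable, so its intersection with the single $B$-orbit $U_{w}$ is empty or all of $U_{w}$, and the latter would force $\sigma\,=\,0$ by density of $U_{w}$ in the integral variety $Z(w,\,\underline{i})$. Hence, for two $B$-stable lines carrying nonzero sections $\sigma_{1},\,\sigma_{2}$, the restriction $\sigma_{1}|_{U_{w}}/\sigma_{2}|_{U_{w}}$ is a unit in $\mathbb{C}[U_{w}]$, so a nonzero constant; thus $\sigma_{1}$ and $\sigma_{2}$ are proportional on $U_{w}$ and therefore, by injectivity of restriction, on all of $Z(w,\,\underline{i})$. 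This gives uniqueness of the $B$-stable line and makes $\mu$ well defined.

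For the final inequality, I would take a weight $\nu$ of $M\,:=\,H^{0}(Z(w,\,\underline{i}),\,\mathcal{L})$ and a nonzero vector $s\in M_{\nu}$, and consider $M_{\le\nu}\,:=\,\bigoplus_{\eta\le\nu}M_{\eta}$. This subspace is $T$-stable; and for each $\gamma\in R^{-}$ the one-parameter root subgroup $U_{\gamma}$ sends a vector of weight $\eta$ to a sum of vectors of weights $\eta,\,\eta+\gamma,\,\eta+2\gamma,\,\ldots$, all $\le\eta$ because $\gamma<0$, so $U_{\gamma}M_{\le\nu}\subseteq M_{\le\nu}$. Since $T$ and the subgroups $U_{\gamma}$ with $\gamma\in R^{-}$ generate $B$, it follows that $M_{\le\nu}$ is a $B$-submodule. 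It contains $s$, hence it contains the $B$-submodule generated by $s$, which is nonzero and therefore contains a $B$-stable line by Lie--Kolchin; by the uniqueness already proved this line is $L_{\mu}$. Therefore $\mu\le\nu$, as required.

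The one input that is not completely formal is the identification of $U_{w}\,=\,Z(w,\,\underline{i})\setminus\bigcup_{j}X_{j}$ as a single $B$-orbit isomorphic to $\mathbb{A}^{r}$, i.e. the fact that $\phi_{(w,\underline{i})}$ is an isomorphism over the big Schubert cell $BwB/B$. This is standard for Bott--Samelson--Demazure--Hansen varieties and is essentially what is already used in the proof of Corollary \ref{cor2.3} (see \cite[Chapter 2, Section 2.2]{BK05}), so I expect no real obstacle beyond assembling these pieces.
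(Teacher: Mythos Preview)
Your proposal is correct and takes essentially the same approach as the paper: both arguments exploit the open dense $B$-orbit $U\cdot z\cong\mathbb{A}^{r}$ in $Z(w,\underline{i})$ to show that the space of $B$-eigensections is one-dimensional (the paper phrases this as $\dim H^{0}(Z(w,\underline{i}),\mathcal{L})^{U}\le 1$ via evaluation at the base point $z$, while you phrase it via units of $\mathbb{C}[\mathbb{A}^{r}]$), and both deduce the weight inequality by observing that the $U$-submodule generated by a weight-$\nu$ vector has only weights $\le\nu$ and must contain $L_{\mu}$. The packaging differs slightly, but the underlying ideas are identical.
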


\begin{proof}
Consider the restriction map $${\rm res\,:\,} H^0(Z(w,\,\underline{i}),\,
\mathcal{L})^{U}\,\longrightarrow \,H^0(U\cdot z,\,\mathcal{L})^{U},$$ where $z
\,=\,[\dot s_{i_{1}},\,\dot s_{i_{2}},\,\ldots,\,
\dot s_{i_{r}}]$ and $\dot{s_{i_{j}}}$ denotes a representative of $s_{i_{j}}$ in
$P_{\alpha_{i_{j}}}.$ Since
$U\cdot z$ is open dense in $Z(w,\,\underline{i}),$ it follows that ${\rm res}$\, is injective.
Now, let $\sigma_1,\,\sigma_2\,\in\, H^0(U\cdot z,\,\mathcal{L})^{U}\setminus\{0\}.$ Then we
have $$\sigma_1(uz)\,=\,\sigma_1(z),\, \ \
\sigma_2(uz)\,=\,\sigma_2(z),$$ and so $\sigma_1\,=\,c\sigma_2$ for some non-zero scalar $c.$
Therefore, $H^0(U\cdot z,
\mathcal{L})^{U}$ is one dimensional. Hence, $H^0(Z(w,\,\underline{i}),\,\mathcal{L})^{U}$ is
one dimensional.

Consequently, the proof of the Proposition follows by the observation that the subspace
$H^0(Z(w,\,\underline{i}),\,\mathcal{L})^{U}$ of $U$-invariant is a non-zero $T$-module; its
$T$-eigenvectors are the $B$-eigenvectors in $H^0(Z(w,\,\underline{i}),\,\mathcal{L}).$
Moreover, if $\sigma\in H^{0}(Z(w,\,\underline{i}),
\mathcal{L})_{\nu}\setminus\{0\},$ then the $U$-module generated by $\sigma$ is $B$-stable.
Hence, it contains
the unique $B$-stable line $L_{\mu}.$ Therefore, by 
\cite[p.~165, Chapter X, Proposition 27.2]{Hum2}, it follows that $\nu\,\ge\, \mu.$
\end{proof}

Let $c\,=\,s_{i_{1}}s_{i_{2}}\cdots s_{i_{n}}$ be a reduced expression of a Coxeter element, and let $\underline{i}\,=\,(i_{1},\,i_{2},\,\cdots,\, i_{n})$ be the
	tuple corresponding to this reduced expression of $c.$
	
	\begin{proposition}\label{prop4.6} For all $j\, \geq\, 1$, $$H^{j}(Z(c,\,\underline{i}), \,K_{Z(c,\underline{i})}^{-1})\,=\,0.$$
	\end{proposition}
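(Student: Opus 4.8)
The plan is to proceed by induction on $n \,=\, \ell(c)$, using the inductive machinery of Lemma \ref{lem3.1}. The base case $n\,=\,1$ is immediate: $Z(s_{i_1},\underline{i})\,\simeq\,\mathbb{P}^1$ and $K^{-1}_{\mathbb{P}^1}\,=\,\mathcal{O}_{\mathbb{P}^1}(2)$ has no higher cohomology. For the inductive step, write $c\,=\,s_{i_1}v$ with $v\,=\,s_{i_2}\cdots s_{i_n}$ a reduced expression of a Coxeter element of the parabolic subgroup generated by $\{s_{i_2},\ldots,s_{i_n}\}$ (crucially, since $c$ is a Coxeter element, $i_1$ does \emph{not} appear among $i_2,\ldots,i_n$, so $v$ is again a ``Coxeter-type'' element in its own support). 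By the induction hypothesis, $H^j(Z(v,\underline{i}'),K^{-1}_{Z(v,\underline{i}')})\,=\,0$ for all $j\,\ge\,1$.

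Next I would feed this into the short exact sequence of Lemma \ref{lem3.1}(ii). For $j\,\ge\,2$, the term $H^{j-1}(Z(v,\underline{i}'),K^{-1}_{Z(v,\underline{i}')})$ vanishes by induction, and the term $H^j(Z(v,\underline{i}'),K^{-1}_{Z(v,\underline{i}')})$ vanishes as well, so $H^j(Z(c,\underline{i}),K^{-1}_{Z(c,\underline{i})})\,=\,0$ for all $j\,\ge\,2$ for free. The only remaining case is $j\,=\,1$, where the sequence reads
$$0\,\longrightarrow\,H^1\bigl(s_{i_1},\,H^0(Z(v,\underline{i}'),K^{-1}_{Z(v,\underline{i}')})\otimes\mathbb{C}_{\alpha_{i_1}}\bigr)\,\longrightarrow\,H^1(Z(c,\underline{i}),K^{-1}_{Z(c,\underline{i})})\,\longrightarrow\,H^0\bigl(s_{i_1},\,H^1(Z(v,\underline{i}'),K^{-1}_{Z(v,\underline{i}')})\otimes\mathbb{C}_{\alpha_{i_1}}\bigr)\,\longrightarrow\,0.$$
The right-hand term is zero by induction, so it remains to show $H^1\bigl(s_{i_1},\,H^0(Z(v,\underline{i}'),K^{-1}_{Z(v,\underline{i}')})\otimes\mathbb{C}_{\alpha_{i_1}}\bigr)\,=\,0$.

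The main obstacle, then, is this single $H^1$ vanishing on $\mathbb{P}^1$. To handle it I would decompose the $B$-module $M\,:=\,H^0(Z(v,\underline{i}'),K^{-1}_{Z(v,\underline{i}')})$ as a $B_{\alpha_{i_1}}$-module into indecomposable summands, each of the form $V'\otimes\mathbb{C}_\mu$ with $V'$ irreducible for $L_{\alpha_{i_1}}$ (Lemma \ref{lemma 1.4}), and then invoke Lemma \ref{lemma1.3}: the only way $H^1(L_{\alpha_{i_1}}/B_{\alpha_{i_1}},V'\otimes\mathbb{C}_{\mu+\alpha_{i_1}})$ can be nonzero is if $\langle\mu+\alpha_{i_1},\alpha_{i_1}\rangle\,\le\,-2$, i.e. $\langle\mu,\alpha_{i_1}\rangle\,\le\,-4$. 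Thus it suffices to show that every weight $\mu$ of $M$ satisfies $\langle\mu,\alpha_{i_1}\rangle\,\ge\,-3$ (in fact one expects $\ge\,-1$). Here is where I would use the structural results of the previous section: by Proposition \ref{prop4.0} (or Lemma \ref{lem4.4}), every weight $\mu$ of $M$ satisfies $\mu\,\ge\,\lambda_v\,=\,v\cdot 0\,=\,-\sum_{\beta\in R^+(v^{-1})}\beta$, and the top/extremal weights of $M$ are controlled by Corollary \ref{cor2.3}, which expresses $K^{-1}_{Z(v,\underline{i}')}$ explicitly in terms of the divisors $X_l$ with coefficients $1+\langle\rho,s_{i_n}\cdots s_{i_{l+1}}(\alpha_{i_l})\rangle$. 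Since $c$ is a Coxeter element, the reflections $s_{i_2},\ldots,s_{i_n}$ are all distinct, which sharply constrains the roots $s_{i_n}\cdots s_{i_{l+1}}(\alpha_{i_l})$ and hence the possible pairings $\langle\mu,\alpha_{i_1}\rangle$; a short case analysis (using that a single simple root $\alpha_{i_1}$ appears at most once in each relevant weight, because of the Coxeter hypothesis) should pin down $\langle\mu,\alpha_{i_1}\rangle\,\ge\,-1$, giving the required $H^1$ vanishing and completing the induction.
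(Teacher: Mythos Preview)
Your overall architecture is exactly that of the paper: induct via Lemma~\ref{lem3.1}, kill $H^j$ for $j\ge 2$ immediately from the induction hypothesis, and reduce $j=1$ to showing
\[
H^1\bigl(s_{i_1},\,M\otimes\mathbb{C}_{\alpha_{i_1}}\bigr)=0,\qquad M:=H^0(Z(v,\underline{i}'),K^{-1}_{Z(v,\underline{i}')}),
\]
by controlling the $\langle\cdot,\alpha_{i_1}\rangle$-values of the weights of $M$. The gap is in how you propose to obtain that weight bound.

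Lemma~\ref{lem4.4} (or Proposition~\ref{prop4.0}) gives $\mu\ge v\cdot 0$ in the root order, i.e.\ $\mu-v\cdot 0\in\sum_{\alpha\in S}\mathbb{Z}_{\ge 0}\alpha$. This does \emph{not} bound $\langle\mu,\alpha_{i_1}\rangle$ from below: if $\mu-v\cdot 0$ has large coefficients along simple roots $\alpha_{i_k}$ adjacent to $\alpha_{i_1}$, then $\langle\mu,\alpha_{i_1}\rangle$ can be pushed arbitrarily negative. Corollary~\ref{cor2.3} gives a divisor expression for $K^{-1}$, not a weight bound on sections. So the tools you cite do not deliver the inequality you need, and the final ``short case analysis'' is where the real content hides.

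What the paper actually does is prove, \emph{simultaneously} with the cohomology vanishing and again via Lemma~\ref{lem3.1}(i), that every weight of $M$ has the form
\[
\sum_{k=2}^{n}\alpha_{i_k}-\nu,\qquad \nu\in\sum_{k=2}^{n}\mathbb{Z}_{\ge 0}\,\alpha_{i_k}.
\]
This is an \emph{upper} bound together with a support restriction (no $\alpha_{i_1}$ appears), and it is exactly what makes the estimate go through: since $\langle\alpha_{i_k},\alpha_{i_1}\rangle\le 0$ for $k\ge 2$ and $\langle\sum_{k\ge 2}\alpha_{i_k},\alpha_{i_1}\rangle\ge -3$ by inspection of Dynkin diagrams, one gets $\langle\mu+\alpha_{i_1},\alpha_{i_1}\rangle\ge -1$ for every weight $\mu$ of $M$. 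The support restriction also forces each indecomposable $B_{\alpha_{i_1}}$-summand of $M\otimes\mathbb{C}_{\alpha_{i_1}}$ to be one-dimensional (weights within a summand differ by multiples of $\alpha_{i_1}$, but all weights have the same $\alpha_{i_1}$-coefficient), so Lemma~\ref{lemma1.3} applies summand by summand with $V'$ trivial. Replace your appeal to Lemma~\ref{lem4.4} by this inductive description of the weights and the argument closes.
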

	
	\begin{proof}
		Let $v_{r}\,=\,s_{i_{r}}\cdots s_{i_{n}},$ and $\underline{i}_{r}\,=\,(i_{r},\, \cdots,\, i_{n})$ for all $1\,\le
		\, r\,\le \,n.$ To prove the proposition, we show by descending induction on $r$ that  $$H^{j}(Z(v_{r},\,\underline{i}_{r}), \,K_{Z(v,\underline{i}_{r})}^{-1})\,=\,0$$
		 for all $j\, \geq\, 1.$

		 So, the induction hypothesis says that $$H^j(Z(v_{r+1}, \,\underline{i}_{r+1}),\, K_{Z(v_{r+1}, \underline{i}_{r+1})}^{-1})\,=\,0$$ for all $j\,\ge\, 1.$
		
By Lemma \ref{lem3.1}(i) and induction on $r,$ the weights
of $H^0(Z(v_{r+1},\,\underline{i}_{r+1}),\,K_{Z(v_{r+1},\underline{i}_{r+1})}^{-1})
\otimes \mathbb{C}_{\alpha_{i_{r}}}$ are 
		of the form $\sum\limits_{k=r}^{n}\alpha_{i_{k}}-\mu,$ where $\mu \,\in\,
\sum\limits_{k=r+1}^{n}{\mathbb{Z}_{\ge 0}} \alpha_{i_{k}}.$ It follows from
the classification of the Dynkin diagram that $\langle \sum\limits_{k=r+1}^{n}\alpha_{i_{k}},\, \alpha_{i_{r}} \rangle\,\ge\, -3.$
Hence it follows that $\langle 
		\sum\limits_{k=r}^{n}\alpha_{i_{k}}-\mu,\, \alpha_{i_{r}} \rangle\,\ge\, -1$ for all $\mu \,\in\, \sum\limits_{k=r+1}^{n}{\mathbb{Z}_{\ge 0}} 
		\alpha_{i_{k}}.$ By \cite[p. 129, Proposition 9.1]{BKS} and Lemma \ref{lemma 1.4}, $H^0(Z(v_{r+1},\,\underline{i}_{r+1}),\,K_{Z(v_{r+1},\underline{i}_{r+1})}^{-1})\otimes \mathbb{C}_{\alpha_{i_{r}}}$ is a direct sum of one dimensional $B_{\alpha_{i_{r}}}$-modules of the form $\mathbb{C}_{\lambda},$ where $\lambda=\sum\limits_{k=r}^{n}\alpha_{i_{k}}-\mu$ and $\mu \,\in\, \sum\limits_{k=r+1}^{n}{\mathbb{Z}_{\ge 0}} \alpha_{i_{k}}.$ 		
		Therefore, by using Lemma \ref{lemma1.3} we have $H^1(s_{i_{r}}, \,
		H^0(Z(v_{r+1},\,\underline{i}_{r+1}),\, K_{Z(v_{r+1},\underline{i}_{r+1})}^{-1})\otimes \mathbb{C}_{\alpha_{i_{r}}})\,=\,0.$ On the other hand, by 
		induction hypothesis we have $H^j(Z(v_{r+1},\,\underline{i}_{r+1}),\, K_{Z(v_{r+1},\underline{i}_{r+1})}^{-1})\,=\,0$ for all $j\,\ge\, 1.$
		
Thus, using Lemma \ref{lem3.1}(ii) we have $H^1(Z(v_{r},\,
\underline{i}_{r}),\, K_{Z(v_{r},\underline{i}_{r})}^{-1})\,=\,0.$ Therefore, by using Lemma 
		\ref{lem3.1}(ii) for $j\,\ge\, 2$ inductively, we have $H^j(Z(v_{r} ,\,
\underline{i}_{r}),\, K_{Z(v_{r},\underline{i}_{r})}^{-1})\,=\,0$ for all $j\,\ge\, 2.$
Consequently, we have $$H^j(Z(c,\,\underline{i}), \,K_{Z(c,\underline{i})}^{-1})\,=\,0$$ for all $j\,\ge\, 1.$
	\end{proof}

\begin{remark}

\begin{itemize}
	\item [(i)] We note that $Z(c,\,\underline{i})$ is a smooth projective
toric variety.
	
\item [(ii)] The Proposition \ref{prop4.6} follows from \cite[p.~301, Theorem 
6]{LLM} if $K_{Z(c,\underline{i})}^{-1}$ is globally generated. But, the global generation of 
$K_{Z(c,\underline{i})}^{-1}$ fails in the view of the Example \ref{exam5.5}.
\end{itemize}	
\end{remark}

\section{Main Results}

In this section we prove our main results.
	
Let $w\,=\,s_{i_{1}}s_{i_{2}}\cdots s_{i_{r}}$ be a reduced expression and $\underline{i}\,=\,(i_{1},i_{2},\, \cdots ,\,i_{r}).$
Recall that $w_{j}\,=\,s_{i_{1}}s_{i_{2}}\cdots s_{i_{j}}$ and $\underline{i}_{j}\,=\,(i_{1},\,i_{2},\,\cdots,\,i_{j}).$
For each $k$ such that $s_{k}\,\le\, w_{j},$ let $j(k)$ be the largest integer less than or equal to $j$ such that
$i_{j(k)}\,=\,k.$ Let $\{i_{1},\,i_{2},\,\cdots,i_{j}\}\,=\,\{a_{1},\,a_{2},\,\cdots, \,a_{l}\}$ as sets and
$j(a_{1})\,<\,j(a_{2})\,<\,\cdots \,<\,j(a_{l}).$ Then we prove the following. 
\begin{lemma}\label{lem5.1}

Assume that $\lambda\,\in\, X(B).$ Then the line bundle $\mathcal{O}_{j}(\lambda)$ on $Z(w_{j},\,\underline{i}_{j})$ is
isomorphic to
$$\mathcal{O}_{j(a_{1})}(\langle \lambda,\, \alpha_{a_{1}}\rangle )\otimes \cdots\otimes\mathcal{O}_{j(a_{l})}(\langle \lambda,
\,\alpha_{a_{l}}\rangle).$$
\end{lemma}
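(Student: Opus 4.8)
The plan is to reduce the statement to the case where $\lambda$ is a fundamental weight, and then settle that case by an explicit computation of divisor classes via Lemma \ref{lem2.1}.

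First I would unwind the definitions. On the ambient variety $Z(w_{j},\,\underline{i}_{j})$ the map $f_{j}$ is the identity, so $\mathcal{O}_{j}(\lambda)\,=\,\phi_{(w_{j},\underline{i}_{j})}^{*}\mathcal{L}(w_{j},\lambda)$ is just the pullback $\psi_{j}^{*}\mathcal{L}(\lambda)$ of the homogeneous line bundle $\mathcal{L}(\lambda)$ on $G/B$ along the natural morphism $\psi_{j}\colon Z(w_{j},\,\underline{i}_{j})\,\longrightarrow\, X(w_{j})\,\hookrightarrow\, G/B$. Since $G$ is simply connected, $\{\omega_{a}\}_{a\in S}$ is the $\mathbb{Z}$-basis of $X(T)$ dual to the coroots, so $\lambda\,=\,\sum_{a\in S}\langle\lambda,\,\alpha_{a}\rangle\,\omega_{a}$ and hence $\mathcal{L}(\lambda)\,\simeq\,\bigotimes_{a\in S}\mathcal{L}(\omega_{a})^{\otimes\langle\lambda,\,\alpha_{a}\rangle}$ in $\mathrm{Pic}(G/B)$; pulling back gives $\mathcal{O}_{j}(\lambda)\,\simeq\,\bigotimes_{a\in S}\mathcal{O}_{j}(\omega_{a})^{\otimes\langle\lambda,\,\alpha_{a}\rangle}$. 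So it is enough to prove: (a) if $a\notin\{i_{1},\dots,i_{j}\}$ then $\mathcal{O}_{j}(\omega_{a})$ is trivial on $Z(w_{j},\,\underline{i}_{j})$; and (b) if $a\in\{i_{1},\dots,i_{j}\}$ then $\mathcal{O}_{j}(\omega_{a})\,\simeq\,\mathcal{O}_{j(a)}(\omega_{a})$. Granting (a) and (b), and using $\{i_{1},\dots,i_{j}\}=\{a_{1},\dots,a_{l}\}$, $i_{j(a_{t})}=a_{t}$, and the convention $\mathcal{O}_{k}(m):=\mathcal{O}_{k}(m\omega_{i_{k}})$, one gets $\mathcal{O}_{j}(\lambda)\,\simeq\,\bigotimes_{t=1}^{l}\mathcal{O}_{j(a_{t})}(\omega_{a_{t}})^{\otimes\langle\lambda,\,\alpha_{a_{t}}\rangle}=\mathcal{O}_{j(a_{1})}(\langle\lambda,\,\alpha_{a_{1}}\rangle)\otimes\cdots\otimes\mathcal{O}_{j(a_{l})}(\langle\lambda,\,\alpha_{a_{l}}\rangle)$, which is the assertion.

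For (a) and (b) I would work in the basis $X_{1},\dots,X_{j}$ of $\mathrm{Pic}(Z(w_{j},\,\underline{i}_{j}))$. By Lemma \ref{lem2.1}, applied with $Z(w_{j},\,\underline{i}_{j})$ as the ambient BSDH-variety, for each $k\le j$ we have $\mathcal{O}_{k}(\omega_{a})\,\simeq\,\mathcal{O}_{Z(w_{j},\underline{i}_{j})}\bigl(\sum_{l=1}^{k}\langle\omega_{a},\,s_{i_{k}}\cdots s_{i_{l+1}}(\alpha_{i_{l}})\rangle\,X_{l}\bigr)$. The elementary input is that the root $s_{i_{k}}s_{i_{k-1}}\cdots s_{i_{l+1}}(\alpha_{i_{l}})$ lies in $\sum_{m=l}^{k}\mathbb{Z}\alpha_{i_{m}}$, since applying a simple reflection $s_{i_{m}}$ changes a vector only by an integer multiple of $\alpha_{i_{m}}$; combined with $\langle\omega_{a},\,\alpha_{b}\rangle=\delta_{ab}$, the coefficient $\langle\omega_{a},\,s_{i_{k}}\cdots s_{i_{l+1}}(\alpha_{i_{l}})\rangle$ is simply the $\alpha_{a}$-coordinate of that root. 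For (a), if $a\notin\{i_{1},\dots,i_{j}\}$ then every such root has vanishing $\alpha_{a}$-coordinate, so all coefficients of $\mathcal{O}_{j}(\omega_{a})$ are zero. For (b), set $k=j(a)$: for $k<l\le j$ the indices $i_{l},\dots,i_{j}$ all differ from $a$ (as $j(a)$ is the largest index $\le j$ with $i_{\bullet}=a$), so the $X_{l}$-coefficient of $\mathcal{O}_{j}(\omega_{a})$ vanishes; and for $l\le k$, factoring $s_{i_{j}}\cdots s_{i_{l+1}}=(s_{i_{j}}\cdots s_{i_{k+1}})(s_{i_{k}}\cdots s_{i_{l+1}})$ and noting that the outer reflections $s_{i_{k+1}},\dots,s_{i_{j}}$ do not touch the $\alpha_{a}$-coordinate, the $X_{l}$-coefficients of $\mathcal{O}_{j}(\omega_{a})$ and of $\mathcal{O}_{j(a)}(\omega_{a})$ agree. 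Hence $\mathcal{O}_{j}(\omega_{a})\,\simeq\,\mathcal{O}_{j(a)}(\omega_{a})$, which finishes the proof.

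I do not expect a genuine obstacle; the argument is essentially a bookkeeping exercise. The point requiring the most care is tracking which indices among $i_{1},\dots,i_{j}$ equal $a$ and where they sit relative to the position $j(a)$, together with matching the several conventions ($\mathcal{O}_{k}(m)=\mathcal{O}_{k}(m\omega_{i_{k}})$, that $\mathcal{O}_{j}(\lambda)$ on the full $Z(w_{j},\,\underline{i}_{j})$ is honestly $\psi_{j}^{*}\mathcal{L}(\lambda)$, and multiplicativity of $\mu\mapsto\mathcal{L}(\mu)$). An alternative, slightly longer route avoids Lemma \ref{lem2.1} and runs by induction on $j$ along the $\mathbb{P}^{1}$-fibration $Z(w_{j},\,\underline{i}_{j})\,\longrightarrow\, Z(w_{j-1},\,\underline{i}_{j-1})$: write $\lambda=\langle\lambda,\,\alpha_{i_{j}}\rangle\,\omega_{i_{j}}+\lambda'$ with $\langle\lambda',\,\alpha_{i_{j}}\rangle=0$, observe that $\mathcal{O}_{j}(\lambda')$ is then pulled back from $Z(w_{j-1},\,\underline{i}_{j-1})$, and apply the inductive hypothesis; but the direct computation above is cleaner.
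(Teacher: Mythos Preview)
Your argument is correct, but it follows a different route from the paper's. The paper proceeds geometrically: after reducing to the case $\{a_{1},\dots,a_{l}\}=S$, it uses the embedding $\mathrm{em}\colon G/B\hookrightarrow\prod_{k}G/P_{S\setminus\{\alpha_{a_{k}}\}}$ to split $\mathcal{L}(\lambda)$ as an external product, and then observes via a commutative diagram that the composite $\pi_{k}\circ\phi_{(w_{j},\underline{i}_{j})}\colon Z(w_{j},\underline{i}_{j})\to G/P_{S\setminus\{\alpha_{a_{k}}\}}$ factors through $f_{j(a_{k})}$ (because $p_{j(a_{k})+1}\cdots p_{j}\in P_{S\setminus\{\alpha_{a_{k}}\}}$). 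This yields $\mathcal{O}_{j}(\lambda)\simeq\bigotimes_{k}\mathcal{O}_{j(a_{k})}(\langle\lambda,\alpha_{a_{k}}\rangle)$ directly at the level of morphisms, without ever passing to the divisor basis $X_{1},\dots,X_{j}$. Your approach instead stays inside $\mathrm{Pic}(Z(w_{j},\underline{i}_{j}))$ and computes everything against the $X_{l}$'s via Lemma~\ref{lem2.1}, tracking the $\alpha_{a}$-coordinate of the roots $s_{i_{k}}\cdots s_{i_{l+1}}(\alpha_{i_{l}})$. Both arguments rest on the same underlying fact---that simple reflections with index $\neq a$ leave the $\alpha_{a}$-component alone---but the paper packages it as ``$P_{\alpha_{i_{m}}}\subset P_{S\setminus\{\alpha_{a}\}}$ for $i_{m}\neq a$'' while you package it as ``$s_{i_{m}}$ fixes the $\omega_{a}$-pairing''. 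Your version is more hands-on and avoids introducing the partial flag varieties; the paper's is more conceptual and makes the factorisation $\mathcal{O}_{j}(\omega_{a})\simeq\mathcal{O}_{j(a)}(\omega_{a})$ transparent from the geometry rather than from a coefficient comparison.
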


\begin{proof}
Without loss of generality we assume that $S\,=\,\{\alpha_{a_{1}},\,\cdots,\, \alpha_{a_{l}}\},$ i.e., $l\,=\,n,$ where $n$ is the
rank of $G.$ Recall that there is a natural embedding $${\rm em}\,:\,G/B
\,\hookrightarrow \,\prod_{k=1}^{l}G/P_{S\setminus\{\alpha_{a_{k}}\}}$$ given by $$gB\,\longmapsto\,
 (gP_{S\setminus\{\alpha_{a_{1}}\}},\,\cdots,\, gP_{S\setminus\{\alpha_{a_{l}}\}}).$$ This induces an isomorphism of the Picard
groups $${\rm em}^{*}\,:\,{\rm Pic}(\prod_{k=1}^{l}G/P_{S\setminus\{\alpha_{a_{k}}\}})\,\,\longrightarrow\,\, {\rm Pic}(G/B)$$
defined by $$\boxtimes_{k=1}^{l} \mathcal{L}^{\otimes m_{k}}(\omega_{a_{k}})\,\longmapsto\, \mathcal{L}(\sum_{k=1}^{l}m_{k}\omega_{a_{k}}),$$
where $m_{k}$'s are integers. The inverse map of ${\rm em}^{*}$ is given by $$\mathcal{L}(\lambda)
\,\longmapsto \,\boxtimes_{k=1}^{l} \mathcal{L}^{\otimes \langle \lambda,\,\alpha_{a_k}\rangle }(\omega_{a_{k}}).$$ 

Consider the commutative diagram

$\centerline{
	\xymatrixcolsep{5pc}\xymatrix{Z(w_{j},\underline{i}_{j})\ar[r]^{\phi_{(w_{j},\underline{i}_{j}})}\ar[d]^{f_{j(a_{k})}}&
G/B \ar@{^{(}->}[r]^{{\rm em}}\ar@{=}[d]&\prod_{k=1}^{l} G/P_{S\setminus \{\alpha_{a_{k}}\}}\ar[d]_{p_{k}}\\
Z(w_{j(a_{k})},\,\underline{i}_{j(a_k)})\ar[r]^{\phi_{(w_{j(a_{k})},\underline{i}_{j(a_{k})})}}& G/B\ar[r]^{\pi_{k}}&G/P_{S\setminus\{\alpha_{a_{k}}\}}}}.$
where $p_{k}\,:\, \prod_{k=1}^{l} G/P_{S\setminus \{\alpha_{a_{k}}\}}\,\longrightarrow\,
G/P_{S\setminus\{\alpha_{a_{k}}\}}$ denotes the projection onto the $k$-th factor and $\pi_{k}\,:\,G/B\,
\longrightarrow\, G/P_{S\setminus\{\alpha_{a_{k}}\}}$ is the natural projection map.

Therefore, we have
$$\mathcal{O}_{j}(\lambda)\,:=\,\phi_{(w_{j},\,\underline{i}_{j})}^{*}\mathcal{L}(\lambda)
\,=\,{({\rm em}\circ \phi_{(w_{j},\underline{i}_{j})})}^{*}\boxtimes_{k=1}^{l}
\mathcal{L}(\langle \lambda,\,\alpha_{a_k}\rangle\omega_{a_{k}})
$$
$$
=\,\bigotimes_{k=1}^{l}{(p_{k}\circ {\rm em}\circ \phi_{(w_{j},\underline{i}_{j}})}^{*}\mathcal{L}(\langle \lambda,\,
\alpha_{a_k}\rangle\omega_{a_{k}}).$$ Now the proof follows from the above commutative diagram and the observation
that $${(\pi_{k}\circ \phi_{(w_{j(a_{k})},\,
\underline{i}_{j(a_{k})})}\circ f_{j(a_{k})})}^{*}\mathcal{L}(\langle \lambda,\,
\alpha_{a_k}\rangle\omega_{a_{k}})\,=\,\mathcal{O}_{j(a_{k})}(\langle \lambda,\,
\alpha_{a_{k}}\rangle).$$
\end{proof}

\begin{lemma}\label{lem5.2}
Let $w$ and $\underline{i}$ be as above.
Then the anti-canonical bundle $K_{Z(w,\underline{i})}^{-1}$ of $Z(w,\,\underline{i})$ is isomorphic to
	$\mathcal{O}_{1}(\alpha_{i_{1}})\otimes \cdots \otimes\mathcal{O}_{r-1}(\alpha_{i_{r-1}})\otimes \mathcal{O}_{r}(\alpha_{i_{r}}).$
\end{lemma}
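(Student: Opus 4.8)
The plan is to compare the two line bundles inside ${\rm Pic}(Z(w,\underline{i}))$, which is free with basis $\{\mathcal{O}_{Z(w,\underline{i})}(X_{l})\}_{1\le l\le r}$ (see \cite[p.~465, Subsection 3.2]{LT}). By the discussion preceding Corollary \ref{cor2.3} one has $K_{Z(w,\underline{i})}^{-1}\cong\mathcal{O}_{Z(w,\underline{i})}\big(\sum_{j=1}^{r}X_{j}\big)\otimes\mathcal{O}_{r}(\rho)$, so it suffices to prove
$$\bigotimes_{k=1}^{r}\mathcal{O}_{k}(\alpha_{i_{k}})\;\cong\;\mathcal{O}_{Z(w,\underline{i})}\Big(\sum_{k=1}^{r}X_{k}\Big)\otimes\mathcal{O}_{r}(\rho).$$
I would do this by telescoping: with the convention $\mathcal{O}_{0}(\rho):=\mathcal{O}_{Z(w,\underline{i})}$, I claim that
$$\mathcal{O}_{k}(\alpha_{i_{k}})\;\cong\;\mathcal{O}_{k}(\rho)\otimes\mathcal{O}_{k-1}(-\rho)\otimes\mathcal{O}_{Z(w,\underline{i})}(X_{k})\qquad\text{for }1\le k\le r.\qquad(\star_{k})$$
Granting all the $(\star_{k})$, tensoring them over $k$ makes the factors $\mathcal{O}_{j}(\rho)$ cancel in consecutive pairs and leaves precisely the displayed isomorphism.

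To prove $(\star_{k})$, rewrite it as $\mathcal{O}_{k}(\alpha_{i_{k}}-\rho)\otimes\mathcal{O}_{k-1}(\rho)\cong\mathcal{O}_{Z(w,\underline{i})}(X_{k})$ and apply Lemma \ref{lem2.1} to $\mathcal{O}_{k}(\alpha_{i_{k}}-\rho)$ and to $\mathcal{O}_{k-1}(\rho)$ separately. This writes both sides in the basis $X_{1},\dots,X_{k}$, so it is enough to match the coefficient of each $X_{l}$. For $l=k$ the left-hand coefficient is $\langle\alpha_{i_{k}}-\rho,\alpha_{i_{k}}\rangle=2-1=1$, as required. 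For $l<k$, set $\gamma:=s_{i_{k-1}}\cdots s_{i_{l+1}}(\alpha_{i_{l}})$ (an empty product when $l=k-1$), so that $s_{i_{k}}\cdots s_{i_{l+1}}(\alpha_{i_{l}})=s_{i_{k}}(\gamma)$; then the left-hand coefficient of $X_{l}$ is $\langle\alpha_{i_{k}}-\rho,s_{i_{k}}(\gamma)\rangle+\langle\rho,\gamma\rangle$ and the right-hand one is $0$. So the whole lemma comes down to the identity
$$\langle\alpha_{i_{k}},s_{i_{k}}(\gamma)\rangle\;=\;\langle\rho,s_{i_{k}}(\gamma)\rangle-\langle\rho,\gamma\rangle.$$

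The one delicate point — and the step I expect to be the main obstacle — is that $\langle\,\cdot\,,\,\cdot\,\rangle$ is linear only in its first slot, so neither side of this identity can be handled by naive linearity in $\gamma$. I would remove the difficulty by passing to the $W$-invariant form $(-,-)$ of Section \ref{Sec2} through $\langle\mu,\beta\rangle=(\mu,\beta^{\vee})$, where $\beta^{\vee}=2\beta/(\beta,\beta)$, and using: $s_{i_{k}}$ is an isometry, hence $(s_{i_{k}}\gamma)^{\vee}=s_{i_{k}}(\gamma^{\vee})$; $s_{i_{k}}(\alpha_{i_{k}})=-\alpha_{i_{k}}$; and $\langle\rho,\alpha_{i_{k}}\rangle=1$, i.e.\ $(\rho,\alpha_{i_{k}})=\tfrac12(\alpha_{i_{k}},\alpha_{i_{k}})$, since $\rho$ is the sum of the fundamental weights. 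Then the left-hand side is $(\alpha_{i_{k}},s_{i_{k}}(\gamma^{\vee}))=(s_{i_{k}}(\alpha_{i_{k}}),\gamma^{\vee})=-(\alpha_{i_{k}},\gamma^{\vee})$, while the right-hand side is $(\rho,\,s_{i_{k}}(\gamma^{\vee})-\gamma^{\vee})=-\langle\gamma^{\vee},\alpha_{i_{k}}\rangle\,(\rho,\alpha_{i_{k}})=-(\gamma^{\vee},\alpha_{i_{k}})$; the two coincide because $(-,-)$ is symmetric. This establishes $(\star_{k})$ for every $k$, and hence the lemma. (Equivalently, one may bypass the telescoping and match, coefficient by coefficient in the $X_{l}$-basis, the expression of Corollary \ref{cor2.3} for $K_{Z(w,\underline{i})}^{-1}$ with the Lemma \ref{lem2.1}-expansion of $\bigotimes_{k}\mathcal{O}_{k}(\alpha_{i_{k}})$; the resulting numerical identity is proved by the same coroot computation.)
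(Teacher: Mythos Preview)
Your argument is correct. It is, however, a genuinely different proof from the paper's. The paper argues geometrically: it realizes $Z(w,\underline{i})$ as a fibre product over $G/P_{\alpha_{i_r}}$, so that the relative tangent bundle of the projection $f_{r-1}$ is $\phi_{(w,\underline{i})}^{*}\mathcal{L}(\alpha_{i_{r}})=\mathcal{O}_{r}(\alpha_{i_{r}})$; taking determinants in the resulting tangent bundle sequence gives $K_{Z(w,\underline{i})}^{-1}=\mathcal{O}_{r}(\alpha_{i_{r}})\otimes f_{r-1}^{*}K_{Z(w_{r-1},\underline{i}_{r-1})}^{-1}$, and induction on $r$ finishes. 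Your approach instead starts from the already-recorded formula $K_{Z(w,\underline{i})}^{-1}\cong\mathcal{O}(\sum_{j}X_{j})\otimes\mathcal{O}_{r}(\rho)$ and reduces everything to a coroot computation via Demazure's change-of-basis (Lemma~\ref{lem2.1}). The paper's route is more self-contained and explains conceptually why the $\alpha_{i_j}$ appear (as the weights of the successive relative tangent bundles), while your route is purely numerical but shorter once the canonical-bundle formula and Lemma~\ref{lem2.1} are in hand; in effect, your telescoping identity $(\star_k)$ is the Picard-group shadow of the paper's short exact sequence \eqref{eq4.3}.
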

\begin{proof}
Consider the fiber product diagram

$\centerline{\xymatrixcolsep{5pc}\xymatrix{Z(w,\underline{i})\ar[r]^{\phi_{(w,\underline{i})}}\ar[d]^{f_{r-1}}& G/B \ar[d]^{\pi_{\alpha_{i_r}}}\\ Z(w_{r-1},\underline{i}_{r-1})\ar[r]^{\pi_{\alpha_{i_{r}}}\circ\phi_{(w_{r-1},\underline{i}_{r-1})}}& G/P_{\alpha_{i_{r}}}}}$
where $\pi_{\alpha_{i_{r}}}\,:\,G/B\,\longrightarrow\, G/P_{\alpha_{i_{r}}}$ is the natural projection map. Note that $\mathcal{L}(\alpha_{i_{r}})$ is
the relative tangent bundle of $\pi_{\alpha_{i_r}}$. 

Therefore, we have the following short exact sequence of tangent bundles on $Z(w,\,\underline{i}),$ 
	\begin{equation}\label{eq4.3}
0\,\longrightarrow\, \phi_{(w,\underline{i})}^{*}\mathcal{L}(\alpha_{i_{r}})\,\longrightarrow\, T_{(w,\underline{i})}
\,\longrightarrow\, f_{r-1}^{*} T_{(w_{r-1},\underline{i}_{r-1})}\,\longrightarrow\, 0,
	\end{equation}
	
	where $T_{(w,\underline{i})}$ (respectively, $T_{(w_{r-1},\underline{i}_{r-1})}$) denotes the tangent bundle of $Z(w,\underline{i})$ (respectively, of $Z(w_{r-1},\underline{i}_{r-1})$). Hence, by \eqref{eq4.3} we have $K_{Z(w,\underline{i})}^{-1}=\mathcal{O}_{r}(\alpha_{i_{r}})\otimes f_{r-1}^{*}K_{Z_{(w_{r-1},\underline{i}_{r-1})}}^{-1},$ as $\mathcal{O}_{r}(\alpha_{i_{r}})=\phi_{(w,\underline{i})}^{*}\mathcal{L}(\alpha_{i_{r}}).$
	
	By induction we have $K_{Z_{(w_{r-1},\underline{i}_{r-1})}}^{-1}=\mathcal{O}_{r-1}(\alpha_{i_{r-1}})\otimes \mathcal{O}_{r-2}(\alpha_{i_{r-2}})\otimes\cdots\otimes \mathcal{O}_{1}(\alpha_{i_{1}}).$ Therefore, we have $K_{Z(w,\underline{i})}^{-1}=\mathcal{O}_{r}(\alpha_{i_{r}})\otimes \mathcal{O}_{r-1}(\alpha_{i_{r-1}})\otimes \mathcal{O}_{r-2}(\alpha_{i_{r-2}})\otimes\cdots\otimes \mathcal{O}_{1}(\alpha_{i_{1}}),$ where for $1\,\le\, j
\,\le\, r-1,$ the same notation $\mathcal{O}_{j}(\alpha_{i_{j}})$ is used instead
of $(f_{j}\circ f_{j+1}\circ \cdots \circ f_{r})^{*}\mathcal{O}_{j}(\alpha_{i_{j}}).$
\end{proof}

\begin{proposition}\label{lem5.3}
For any integer $1\,\le\, j\,\le\, r,$ we have $m_{j}\,=\,\langle \sum_{k=j}^{r}\alpha_{i_{k}},\,\alpha_{i_{j}} \rangle,$ if there is no integer
$j+1\,\le\, q\,\le\, r$ such that $i_{q}\,=\,i_{j}.$ Otherwise, $m_{j}\,=\,\langle \sum_{k=j}^{q-1}\alpha_{i_{k}},\,
\alpha_{i_{j}}\rangle,$ where $q$ is the least integer $j+1\,\le\, q\,\le\, r$ such that $i_{q}\,=\,i_{j}.$
\end{proposition}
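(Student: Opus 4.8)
The plan is to combine Lemma~\ref{lem5.2} with Lemma~\ref{lem5.1}. By Lemma~\ref{lem5.2},
$$K_{Z(w,\underline{i})}^{-1}\,\simeq\,\bigotimes_{t=1}^{r}\mathcal{O}_{t}(\alpha_{i_{t}}),$$
so it suffices, for each fixed $t$, to rewrite the factor $\mathcal{O}_{t}(\alpha_{i_{t}})$ in the basis $\{\mathcal{O}_{j}(1)\}_{1\le j\le r}$ of ${\rm Pic}(Z(w,\underline{i}))$, and then, for each $j$, add up the exponents of $\mathcal{O}_{j}(1)$ contributed by all the factors; this total is exactly $m_{j}$.

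Next I would apply Lemma~\ref{lem5.1} on $Z(w_{t},\underline{i}_{t})$ with $\lambda=\alpha_{i_{t}}\in X(B)$ and pull the resulting isomorphism back to $Z(w,\underline{i})$ along $f_{t}$. Writing $t(k)$ for the largest index $\le t$ with $i_{t(k)}=k$, and using that the distinct letters among $i_{1},\cdots,i_{t}$ are exactly those $k$ with $s_{k}\le w_{t}$, this gives
$$\mathcal{O}_{t}(\alpha_{i_{t}})\,\simeq\,\bigotimes_{k\,:\,s_{k}\le w_{t}}\mathcal{O}_{t(k)}\big(\langle\alpha_{i_{t}},\,\alpha_{k}\rangle\big);$$
since $i_{t(k)}=k$, each factor is $\mathcal{O}_{t(k)}(1)^{\otimes\langle\alpha_{i_{t}},\,\alpha_{i_{t(k)}}\rangle}$. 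Hence the exponent of $\mathcal{O}_{j}(1)$ contributed by $\mathcal{O}_{t}(\alpha_{i_{t}})$ equals $\langle\alpha_{i_{t}},\,\alpha_{i_{j}}\rangle$ when $j=t(i_{j})$ and equals $0$ otherwise; and $j=t(i_{j})$ holds precisely when $j\le t$ and no index $j'$ satisfies $j<j'\le t$ and $i_{j'}=i_{j}$.

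It then remains to sum over $t$. If there is no integer $q$ with $j+1\le q\le r$ and $i_{q}=i_{j}$, then $j=t(i_{j})$ for every $t$ with $j\le t\le r$, so
$$m_{j}\,=\,\sum_{t=j}^{r}\langle\alpha_{i_{t}},\,\alpha_{i_{j}}\rangle\,=\,\Big\langle\,\sum_{t=j}^{r}\alpha_{i_{t}},\,\alpha_{i_{j}}\,\Big\rangle.$$
If such a $q$ exists, take it minimal; then $j=t(i_{j})$ holds exactly for $j\le t\le q-1$, and therefore $m_{j}=\langle\sum_{t=j}^{q-1}\alpha_{i_{t}},\,\alpha_{i_{j}}\rangle$, which is the asserted formula.

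I do not expect a genuine obstacle here beyond careful bookkeeping. The two points that need attention are: first, checking that the pullbacks are mutually compatible --- concretely that $f_{t}^{*}$ carries $\mathcal{O}_{t(k)}(-)$ on $Z(w_{t},\underline{i}_{t})$ to $\mathcal{O}_{t(k)}(-)$ on $Z(w,\underline{i})$, which uses $t(k)\le t$ together with the obvious factorization of the projection maps $f_{\bullet}$ --- so that the exponents are genuinely additive and may simply be summed over $t$; and second, correctly translating the condition ``position $j$ is the last occurrence of the letter $i_{j}$ among $i_{1},\cdots,i_{t}$'' into the cutoff $t\le q-1$, where $q$ is the first occurrence of $i_{j}$ strictly after position $j$.
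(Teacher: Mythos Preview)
Your proposal is correct and follows essentially the same route as the paper's own proof: the paper also combines Lemma~\ref{lem5.2} with Lemma~\ref{lem5.1} to obtain, for each $k$, the decomposition $\mathcal{O}_{k}(\alpha_{i_{k}})=\bigotimes_{t}\mathcal{O}_{t}(a_{t,k})$ with $a_{t,k}=\langle\alpha_{i_{k}},\alpha_{i_{t}}\rangle$ exactly when $t$ is the last occurrence of $i_{t}$ among $i_{1},\ldots,i_{k}$, and then sums the contributions $m_{j}=\sum_{k}a_{j,k}$ by the same case analysis on whether $i_{j}$ reoccurs after position $j$. Your explicit remarks on pullback compatibility and on translating the ``last occurrence'' condition into the cutoff $t\le q-1$ are precisely the bookkeeping the paper leaves implicit.
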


\begin{proof}
	
{\it Observation:}
By using Lemma \ref{lem5.1}, for every integer $1\le k\le r,$ we have $\mathcal{O}_{k}(\alpha_{i_{k}})=\mathcal{O}_{1}(a_{1,k})\otimes \mathcal{O}_{2}(a_{2,k})\otimes \cdots \otimes \mathcal{O}_{k}(a_{k,k}),$ where $a_{t,k}=\langle \alpha_{i_{k}},\alpha_{i_{t}}\rangle$ for any integer $1\le t\le k$ for which there is no integer $t< l\le k$ such that $i_{l}=i_{t}$ and $a_{t,k}=0$ otherwise.

Now, let $1\,\le\, j\,\le\, r$ be any integer. Let $m=i_{j}.$

Case 1: There is no integer $j+1\,\le\, q\,\le\, r$ such that $i_{q}\,=\,i_{j}.$ Then $j$ is the 
largest integer less than or equal to $r$ such that $i_{j}\,=\,m.$

 Therefore, by the above {\it Observation} and Lemma \ref{lem5.2}, we have $m_j=\sum_{k=1}^{r}a_{j,k}=\sum_{k=j}^{r}\langle\alpha_{i_{k}},\alpha_{i_{j}} \rangle.$

Case 2: There is an integer $j+1\,\le\, q\,\le\, r$ such that $i_{q}=i_{j}.$ Let $q$ be
the least such integer. Then 
$j$ is the largest integer less than or equal to $q-1$ such that $i_{j}\,=\,m.$ Hence, by the above {\it Observation} and Lemma \ref{lem5.2} we have $m_{j}\,=\sum_{k=1}^{r}a_{j,k}=\,\langle \sum_{k=j}^{q-1}\alpha_{i_{k}},\,\alpha_{i_{j}}\rangle.$
\end{proof}

We fix the ordering of the simple roots as done in \cite[p.~58]{Hum1}. Let $c$
be a Coxeter element. So, $c\,=\,s_{\sigma(1)}s_{\sigma(2)}\cdots s_{\sigma(n)}$ for some
$\sigma\,\in\, S_{n}.$ Then we have the following corollary.

\begin{corollary}\label{prop4.7}
$K_{Z(c,\underline{i})}^{-1}$ is globally generated if and only if $\langle \sum\limits_{k=r}^{n}
\alpha_{\sigma(k)},\,\alpha_{\sigma(r)}\rangle \,\ge\, 0$ for all $1\,\le\, r\,\le \,n.$
\end{corollary}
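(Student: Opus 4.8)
The plan is to combine the explicit description of the anti-canonical bundle in the basis $\{\mathcal{O}_j(1)\}$ with the global generation criterion of Lauritzen--Thomsen. For a Coxeter element $c = s_{\sigma(1)}s_{\sigma(2)}\cdots s_{\sigma(n)}$ the tuple is $\underline{i} = (\sigma(1),\sigma(2),\ldots,\sigma(n))$ and, crucially, every simple reflection occurs \emph{exactly once}, so for each index $j$ there is no later index $q$ with $i_q = i_j$. Hence Proposition~\ref{lem5.3} applies in its first case for every $j$: writing $K_{Z(c,\underline{i})}^{-1} = \mathcal{O}_1(m_1)\otimes\cdots\otimes\mathcal{O}_n(m_n)$ in the basis $\{\mathcal{O}_j(1)\}_{1\le j\le n}$ of $\mathrm{Pic}(Z(c,\underline{i}))$, we get
$$
m_j \,=\, \Big\langle \sum_{k=j}^{n}\alpha_{\sigma(k)},\ \alpha_{\sigma(j)}\Big\rangle
$$
for all $1\le j\le n$, where here I have relabelled the index $j$ running over $1,\ldots,n$ as the value $r$ in the statement (so $i_j = \sigma(j)$).

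Next I invoke the Lauritzen--Thomsen criterion recalled in Section~\ref{Sec2}: the line bundle $\mathcal{O}_1(m_1)\otimes\cdots\otimes\mathcal{O}_n(m_n)$ on $Z(c,\underline{i})$ is globally generated if and only if $m_j\ge 0$ for every $1\le j\le n$ (see \cite[p.~464--465, Theorem 3.1, Corollary 3.3]{LT}). Putting the two facts together, $K_{Z(c,\underline{i})}^{-1}$ is globally generated if and only if
$$
\Big\langle \sum_{k=r}^{n}\alpha_{\sigma(k)},\ \alpha_{\sigma(r)}\Big\rangle \,\ge\, 0
\qquad\text{for all } 1\le r\le n,
$$
which is exactly the assertion. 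The proof is therefore essentially a two-line deduction.

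There is no substantial obstacle here; the only point requiring a moment's care is the bookkeeping that a reduced expression of a Coxeter element has each simple reflection appearing precisely once, which is what forces Proposition~\ref{lem5.3} into its ``no later repetition'' branch uniformly in $j$, and the matching of indices ($r$ in the corollary versus the summation variable in the proposition). One should also note that $Z(c,\underline{i})$ here has dimension $n = \ell(c)$, so $r$ ranges over $1,\ldots,n$ and the basis of the Picard group has rank $n$, consistent with $\mathrm{Pic}(Z(c,\underline{i}))\simeq\mathbb{Z}^n$. With these identifications fixed, the equivalence is immediate.

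\begin{proof}
Since $c$ is a Coxeter element with reduced expression $c = s_{\sigma(1)}\cdots s_{\sigma(n)}$, the associated tuple is $\underline{i} = (\sigma(1),\ldots,\sigma(n))$ and each simple reflection occurs exactly once in this expression. Hence for every $1\le j\le n$ there is no integer $j+1\le q\le n$ with $i_q = i_j$. Writing $K_{Z(c,\underline{i})}^{-1} = \mathcal{O}_1(m_1)\otimes\cdots\otimes\mathcal{O}_n(m_n)$, Proposition~\ref{lem5.3} gives
$$
m_j \,=\, \Big\langle \sum_{k=j}^{n}\alpha_{\sigma(k)},\ \alpha_{\sigma(j)}\Big\rangle
$$
for all $1\le j\le n$. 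By \cite[p.~464--465, Theorem 3.1, Corollary 3.3]{LT}, the line bundle $\mathcal{O}_1(m_1)\otimes\cdots\otimes\mathcal{O}_n(m_n)$ is globally generated if and only if $m_j\ge 0$ for all $1\le j\le n$. Therefore $K_{Z(c,\underline{i})}^{-1}$ is globally generated if and only if $\langle \sum_{k=r}^{n}\alpha_{\sigma(k)},\,\alpha_{\sigma(r)}\rangle \ge 0$ for all $1\le r\le n$.
\end{proof}
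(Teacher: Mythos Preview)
Your proof is correct and follows precisely the same approach as the paper, which simply states that the result follows from Proposition~\ref{lem5.3} and \cite[p.~465, Corollary 3.3]{LT}. You have merely spelled out the observation that in a Coxeter element each simple reflection appears exactly once, forcing the first case of Proposition~\ref{lem5.3} for every index.
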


\begin{proof}
Follows from Proposition \ref{lem5.3} and \cite[p.~465, Corollary 3.3]{LT}.
\end{proof}

\begin{example}\label{exam5.5}
	Assume that $G\,=\,{\rm Spin}(8,\mathbb{C}).$ Take $c\,=\,s_{2}s_{1}s_{3}s_{4}$ and $\underline{i}\,=\,(2,\,1,\,3,\,4).$
	Then $K_{Z(c,\underline{i})}^{-1}$ is not globally generated.
\end{example}

\begin{theorem}\label{prop1.4}
	Let $G$ be a simple algebraic group whose type is different from 
$A_2.$ Let $w\,\in\, W$ be such that ${\rm supp}(w)\,=\,S.$ 
Then $Z(w,\,\underline{i})$ is Fano for the tuple $\underline{i}$ associated to any
reduced expression of $w$ if and only if $w$ is a Coxeter element and 
$w^{-1}(\sum_{t=1}^{n}\alpha_{t})\,\in\, -S.$
\end{theorem}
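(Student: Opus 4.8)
The plan is to analyze when the anti-canonical line bundle $K_{Z(w,\underline{i})}^{-1}$ is ample, using the basis $\{\mathcal{O}_j(1)\}$ of $\mathrm{Pic}(Z(w,\underline{i}))$. By Lemma~\ref{lem5.2} we have $K_{Z(w,\underline{i})}^{-1}\,=\,\mathcal{O}_{1}(\alpha_{i_1})\otimes\cdots\otimes\mathcal{O}_{r}(\alpha_{i_r})$, and by Proposition~\ref{lem5.3} the coordinates $m_j$ in the basis $\{\mathcal{O}_j(1)\}$ are $m_j\,=\,\langle\sum_{k=j}^{q-1}\alpha_{i_k},\,\alpha_{i_j}\rangle$ where $q$ is the next occurrence of $i_j$ after $j$ (or $q=r+1$ if there is none). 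By \cite[p.~464--465, Theorem~3.1]{LT}, $K_{Z(w,\underline{i})}^{-1}$ is ample if and only if $m_j>0$ for all $j$. So the statement reduces to: for every reduced expression $\underline{i}$ of $w$, all these $m_j$ are positive, if and only if $w$ is a Coxeter element with $w^{-1}(\sum_t\alpha_t)\in -S$.

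For the ``only if'' direction, first I would show $w$ must be a Coxeter element. If some simple reflection $s_m$ appears at least twice in a reduced expression of $w$, pick $j<q$ consecutive occurrences of the index $m$; then $m_j\,=\,\langle\sum_{k=j}^{q-1}\alpha_{i_k},\,\alpha_m\rangle\,=\,2\,+\,\langle\sum_{k=j+1}^{q-1}\alpha_{i_k},\,\alpha_m\rangle$, and the inner sum involves only indices $\neq m$, so each Cartan integer $\langle\alpha_{i_k},\alpha_m\rangle$ is $\le 0$. One must argue that for a suitable choice of reduced expression (using braid moves / commutations, exploiting $\mathrm{supp}(w)=S$ and the fact that the Dynkin diagram of a simple group not of type $A_2$ has a vertex with a neighbour of Cartan product $\le -1$ appearing with enough multiplicity, or two neighbours) this quantity can be made $\le 0$, contradicting ampleness. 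Since $w$ has support $S$ and is thus a Coxeter element, write $w=c=s_{\sigma(1)}\cdots s_{\sigma(n)}$; then by Corollary~\ref{prop4.7}, $K_{Z(c,\underline{i})}^{-1}$ is ample (for this $\underline{i}$) iff $\langle\sum_{k=r}^{n}\alpha_{\sigma(k)},\,\alpha_{\sigma(r)}\rangle\,>\,0$ for all $r$ (note $m_n=\langle\alpha_{\sigma(n)},\alpha_{\sigma(n)}\rangle=2>0$ always). Here I would use the identity $\sum_{k=r}^{n}\alpha_{\sigma(k)}\,=\,s_{\sigma(r)}s_{\sigma(r+1)}\cdots s_{\sigma(n)}$ applied to... more precisely, by a standard computation, $c^{-1}(\sum_t\alpha_t)=\sum_{r=1}^n\big(1-\langle\sum_{k=r}^n\alpha_{\sigma(k)},\alpha_{\sigma(r)}\rangle\big)\alpha_{\sigma(r)}$ or a similar telescoping identity relating $w^{-1}(\sum_t\alpha_t)$ to the integers $m_j$; I expect $c^{-1}(\sum_t\alpha_t)\,=\,\sum_t\alpha_t\,-\,\sum_{r}m_r\,\alpha_{\sigma(r)}$ up to reindexing. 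Requiring all $m_r\ge 1$ forces $c^{-1}(\sum_t\alpha_t)$ to be $\sum_t\alpha_t$ minus a sum of simple roots with all coefficients $\ge 1$, i.e. it lies in $-\sum_t\mathbb{Z}_{\ge 0}\alpha_t$ with... and combined with the fact that $c^{-1}(\sum_t\alpha_t)$ has a very constrained form, one deduces it must be exactly $-\alpha_i$ for some $i$, i.e. lies in $-S$.

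For the ``if'' direction, assume $w=c$ is a Coxeter element with $c^{-1}(\sum_t\alpha_t)\in -S$. Fix any reduced expression $\underline{i}$ of $c$; since $c$ is Coxeter, every index appears exactly once, so by Proposition~\ref{lem5.3}, $m_j=\langle\sum_{k=j}^{n}\alpha_{i_k},\,\alpha_{i_j}\rangle$ for all $j$. I would show each $m_j\ge 1$, hence $\ge 1$ gives ampleness; actually each $m_j$ is an integer, and we already know (from the globally-generated part, Theorem~\ref{Thm1.2}/Corollary~\ref{prop4.7} applied via the minuscule or Coxeter analysis, or directly) that $m_j\ge 0$; the hypothesis $c^{-1}(\sum_t\alpha_t)\in -S$ should, via the telescoping identity above, force the sum $\sum_j m_j$ to equal $n+1$ (since $\sum_t\alpha_t-c^{-1}(\sum_t\alpha_t)=\sum_t\alpha_t+\alpha_i$ has coefficient sum $n+1$), and with all $m_j\ge 0$ integers and $m_n=2$, one rules out any $m_j=0$: a zero would force the remaining to... here I would need the combinatorial input that consecutive simple roots in a Coxeter element's reduced word that are adjacent in the Dynkin diagram contribute $-1$ and those at the "end" contribute $+2$, so in fact $m_j\in\{1,2\}$ always when the expression is compatible with a suitable orientation; the condition $c^{-1}(\sum_t\alpha_t)\in-S$ selects exactly the orientations for which no $m_j$ drops to $0$.

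The main obstacle I anticipate is the ``only if'' direction's first step: showing that if $w$ is \emph{not} a Coxeter element (but still has full support), then \emph{some} reduced expression yields a non-positive $m_j$ — this requires a careful braid/commutation argument organizing the reduced word so that between two occurrences of an index $m$ one finds a neighbour $\alpha$ of $\alpha_m$ with $\langle\alpha,\alpha_m\rangle\le -1$ appearing (or two such neighbours, or a single neighbour contributing $-2$ in the non-simply-laced case), and the hypothesis "type $\neq A_2$" is exactly what guarantees enough room in the Dynkin diagram for this; in type $A_2$ the only double occurrence gives $m_j=2+(-1)=1>0$, explaining the exclusion. The telescoping identity linking $w^{-1}(\sum_t\alpha_t)$ to the $m_j$ (second obstacle) should be a routine induction on $\ell(w)$ using $s_{i_j}\cdots s_{i_r}(\alpha_{i_j})$ computations, but getting the bookkeeping exactly right is where care is needed.
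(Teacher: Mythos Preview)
Your setup is right and matches the paper: reduce to $m_j>0$ for all $j$ via Proposition~\ref{lem5.3} and \cite[Theorem~3.1]{LT}.

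For the step ``$w$ not Coxeter $\Rightarrow$ some $\underline{i}$ is not Fano'', you correctly locate the difficulty and the role of the hypothesis $G\neq A_2$, but the argument you sketch is too vague to go through. The paper's proof is sharper: given consecutive occurrences $i_k=i_l=e$, the Fano condition forces exactly one index $j$ strictly between them with $\langle\alpha_{i_j},\alpha_e\rangle=-1$; after commuting moves one may take $j=k+1$, $l=k+2$. Then the Fano condition at position $j$ forces $\langle\alpha_e,\alpha_{i_j}\rangle=-1$ as well, so the $A_2$ braid relation $s_es_{i_j}s_e=s_{i_j}s_es_{i_j}$ is available. Since $G\neq A_2$ there is a third simple root adjacent to $\alpha_e$ or $\alpha_{i_j}$, appearing somewhere in the word; four cases (before/after, adjacent to $e$/to $i_j$) are then checked, using the braid move in two of them, to produce a reduced expression with some $m_\bullet\le 0$. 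You should expect to carry out this case split explicitly.

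The more serious gap is your proposed telescoping identity $c^{-1}(\sum_t\alpha_t)=\sum_t\alpha_t-\sum_r m_r\alpha_{\sigma(r)}$. This is not correct: already $s_{i_2}s_{i_1}(\sum_t\alpha_t)$ involves $\langle\sum_t\alpha_t,\alpha_{i_2}\rangle$, not $m_2=\langle\sum_{k\ge 2}\alpha_{i_k},\alpha_{i_2}\rangle$, and the error propagates. Consequently your counting argument ($\sum m_j=n+1$, etc.) does not get off the ground, and as you yourself note, even if it did the inequalities alone would not pin down $m_j\ge 1$. The paper instead argues by induction on $\mathrm{rank}(G)$: since all Dynkin neighbours of $\alpha_{i_1}$ occur among $\alpha_{i_2},\dots,\alpha_{i_n}$, the condition $m_1\ge 1$ forces $\alpha_{i_1}$ to have exactly one neighbour, with Cartan integer $-1$; hence $\alpha_{i_1}$ is an end vertex, $s_{i_1}(\sum_t\alpha_t)=\sum_{t\neq i_1}\alpha_t$, and one passes to $v=s_{i_1}w$ in the connected subdiagram $S\setminus\{\alpha_{i_1}\}$. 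The same induction handles the converse. Replacing your telescoping attempt by this rank induction would complete the proof.
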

\begin{proof}
	If $G$ is of type $A_1,$ the statement follows trivially.
	
Layout of the proof: If $w$ is not a Coxeter element we show that there is a 
reduced expression $w\,=\,s_{i_{1}}s_{i_2}\cdots s_{i_r}$ and an integer $1\,\le\, k\,<\,r$ such that 
$\alpha_{i_{k}}\,=\,\alpha_{i_{k+2}}$ and the Dynkin diagram of 
$\{\alpha_{i_{k}},\alpha_{i_{k+1}}\}$ is of type $A_2.$ Further, since $G$ is not of type 
$A_{2}$ we will find a tuple $\underline{i}'$ associated to a reduced expression of $w$ such 
that $Z(w,\,\underline{i}')$ is not Fano.
	
Assume that $G$ is not of type $A_{1}$ and $A_2$ and $Z(w,\,\underline{i})$ is Fano for tuple $\underline{i}$ associated to any reduced 
expression of $w.$ Assume on the contrary that $w$ is not a Coxeter element. Then there is a reduced expression 
$\underline{i}\,=\,(i_{1},\,\ldots,\, i_{r})$ of $w$ and integers $1\,\le\, k\,<\,l\le r$ such that $i_{k}\,=\,i_{l}.$ Let $i_{k}\,=\,e.$ Without loss of 
generality we may assume that there is no integer $k\,<\,j\,<\,l$ such that $i_{j}\,=\,e.$
	
Since $Z(w,\underline{i})$ is Fano, by Proposition \ref{lem5.3}, there is a unique integer $k\,<\,j\,<\,l$ such that $\langle 
\alpha_{i_{j}},\,\alpha_{e}\rangle\,\le\, -1.$ Further, for such $j,$ we have $\langle \alpha_{i_{j}},\,\alpha_{e} \rangle\,=\,-1.$ Up to commuting 
relations, we may assume that $j\,=\,k+1$ and $l\,=\,k+2.$ For simplicity of notation we denote the modified expression using commuting 
relations also by $\underline{i}.$ Since $Z(w,\,\underline{i})$ is Fano, by Proposition \ref{lem5.3}, we have 
$m_{j}\,\ge\, 1.$ Hence, we have $$\langle \alpha_{e},\, \alpha_{i_{j}}\rangle\,=\,\langle \alpha_{i_{l}},\,\alpha_{i_{j}} 
\rangle\,=\,-1.$$ Since the Dynkin diagram is connected and different from $A_{2},$ either there is an integer $m<k$ such that $\langle 
\alpha_{e},\,\alpha_{i_{m}} \rangle\,\le\, -1$ or $\langle\alpha_{i_{j}},\,\alpha_{i_{m}} \rangle\,\le\, -1$ or there exists an integer $m\,>\,l$ such 
that $\langle \alpha_{i_{m}},\,\alpha_{i_{j}}\rangle\,\le\, -1$ or $\langle \alpha_{i_{m}}, \,\alpha_{i_{l}}\rangle\,\le\, -1.$
	
{ Case 1:} There is an integer $m<k$ such that $\langle \alpha_{e},\alpha_{i_{m}}\rangle\le -1.$ Let $m$ be the largest such integer. 
Then, $$m_{m}\,\le\, \langle \alpha_{i_{m}},\,\alpha_{i_{m}}\rangle+\langle \alpha_{i_{k}}, \,\alpha_{i_{m}}\rangle+ \langle 
\alpha_{i_{l}},\,\alpha_{i_{m}}\rangle\,=\,2-2\,=\,0.$$ This is a contradiction to $Z(w,\,\underline{i})$ being Fano.
	
	{ Case 2:} There is an integer $m>l$ such that $\langle \alpha_{i_{m}}, \alpha_{i_{j}}\rangle\le -1.$ Let $m$ be the least such integer. Therefore, $m_{j}\,\le\, \langle \alpha_{i_{j}},\,\alpha_{i_{j}}\rangle+\langle \alpha_{i_{l}},\,\alpha_{i_{j}}\rangle+
\langle \alpha_{i_{m}},\,\alpha_{i_{j}}\rangle\,\le\, 0.$ This is a contradiction to $Z(w,\,\underline{i})$ being Fano.
	
	{Case 3:} There exists an integer $m<k$ such that $\langle \alpha_{i_{j}},\alpha_{i_{m}}\rangle\le -1.$ Let $m$ be the largest such integer. Since $\langle \alpha_{i_{j}},\alpha_{e} \rangle=\langle \alpha_{e},\alpha_{i_{j}}\rangle=-1,$ we have $s_{i_{k}}s_{i_{k+1}}s_{i_{k}}=s_{i_{k+1}}s_{i_{k}}s_{i_{k+1}}.$ Therefore, we take the reduced expression $\underline{i}'=(i_{1}',\ldots, i_{r}')$ with $i_{t}'=i_{t}$ for all $t\neq k,k+1,k+2$ and $i_{k}'=i_{k+1},$ $i_{k+1}'=i_{k},$ and $i_{k+2}'=i_{k+1}.$
	
	Hence, for the reduced expression $\underline{i}'$ of $w,$ we have $$m_{m}\,\le\, \langle \alpha_{i_{m}},\,
\alpha_{i_{m}}\rangle+\langle \alpha_{i_{j}},\,\alpha_{i_{m}} \rangle+\langle \alpha_{i_{j}},\,\alpha_{i_{m}} \rangle\,\le\, 0.$$ This is a contradiction to $Z(w,\underline{i}')$ being Fano.

	{ Case 4:} There exists an integer $m>l$ such that $\langle \alpha_{i_{m}}, \,\alpha_{i_{l}}\rangle\,\le\, -1.$ Let $\underline{i}'=(i_{1}',\ldots, i_{r}')$ be the reduced expression as in case 3. Hence, there is an integer $m>l$ such that $\langle \alpha_{i_{m}'},\alpha_{i_{l}}\rangle=\langle \alpha_{i_{m}},\alpha_{e} \rangle\le -1.$ Let $m$ be the least such integer. 
	
	Now, for the reduced expression $\underline{i}'$ of $w$ we have $m_{k+1}=\langle \alpha_{i_{k+1}'}, \alpha_{i_{k+1}'}\rangle+\langle \alpha_{i_{k+2}'}, \alpha_{i_{k+1}'}\rangle+\langle \alpha_{i_{m}'}, \alpha_{i_{k+1}'}\rangle.$ Now note that $\langle \alpha_{i_{k+1}'}, \alpha_{i_{k+1}'}\rangle+\langle \alpha_{i_{k+2}'}, \alpha_{i_{k+1}'}\rangle+\langle \alpha_{i_{m}'}, \alpha_{i_{k+1}'}\rangle=\langle \alpha_{e}, \alpha_{e}\rangle+\langle \alpha_{i_{j}}, \alpha_{e}\rangle+\langle \alpha_{i_{m}}, \alpha_{e}\rangle\le 0.$ This is a contradiction to $Z(w,\underline{i}')$ being Fano.
	
	Thus, $w$ is a Coxeter element.
	
	So, we have $w=s_{i_{1}}\cdots s_{i_{n}}$ where $n={\rm rank}(G)$ and $i_{j}\neq i_{k}$ whenever $j\neq k$. Since 
	$Z(w,\,\underline{i})$ is Fano, by Proposition \ref{lem5.3} for every integer $1\le j\le r-1,$ there is at most one integer $j+1\le k\le r$ such that $\langle \alpha_{i_{k}},\alpha_{i_{j}}\rangle\neq 0.$ Further, for every such integer $k,$ we have $\langle \alpha_{i_{k}},\alpha_{i_{j}}\rangle=-1.$ Hence, the vertex in the Dynkin diagram corresponding to $\alpha_{i_{1}}$ is an end vertex. Therefore, $s_{i_{1}}(\sum_{t=1}^{n}\alpha_{t})=\sum_{t\neq i_{1}}^{}\alpha_{t}.$ Now, let $v=s_{i_{1}}w.$ Let $\underline{i}'=(i_{2},\ldots, i_{n}).$ Then $Z(v,\underline{i}')$ is Fano. Further, \text{supp}($v$)=$S\setminus\{\alpha_{i_{1}}\}$ and since $\alpha_{i_{1}}$ is the end vertex, the Dynkin diagram corresponding to $S\setminus \{\alpha_{i_{1}}\}$ is connected. By induction on rank($G$), $v^{-1}(\sum_{t\neq i_{1}}\alpha_{t})\in -(S\setminus\{\alpha_{i_{1}}\}).$ Therefore, $w^{-1}(\sum_{t=1}^{n}\alpha_{t})\in -S.$
	
	Conversely, assume that $w\in W$ is a Coxeter element such that $w^{-1}(\sum_{t=1}^{n}\alpha_{t})\in -S.$ Let $w=s_{i_{1}}\cdots s_{i_{n}}$ be a reduced expression. 
	
	If $G$ is of type $A_{1},$ then $w\,=\,s_{1}$. So, we are done.
	
	Thus, assume that $G$ is of type different from $A_{1}.$ Since $w^{-1}(\sum_{t=1}^{n}\alpha_{t})\,\in\, -S,$ we
have $\langle \sum_{t=1}^{n}\alpha_{t},\,\alpha_{i_{1}} \rangle \,=\,1.$ Therefore, the Dynkin diagram of
$S\setminus\{\alpha_{i_{1}}\}$ is connected. Also, there is a unique integer $2\,\le\, t_{0}\,\le\, r$ such that
$\langle \alpha_{i_{t_0}},\,\alpha_{i_{1}}\rangle\,=\,-1$ and $\langle \alpha_{i_{t}},\,\alpha_{i_{1}}\rangle\,=\,0$ for
all $t\,\neq\, t_{0},\,i_{1}.$ Consequently, $m_{1}\,=\,1.$
	
Let $v\,=\,s_{i_{1}}w$ and $\underline{i}'\,=\,(i_{2},\,\cdots,\, i_{n}).$ Then $v^{-1}(\sum_{t\neq i_{1}}^{}\alpha_{t})
\,\in\, -(S\setminus \{\alpha_{i_{1}}\}).$ So, by induction on the rank($G$), the variety $Z(v,\,\underline{i}')$ is Fano. Note that there is a unique tuple $(m_{1}',\,m_{2}',\,
\cdots ,\,m_{n-1}')\,\in\, \mathbb{Z}^{n-1}$ such that $K_{Z(v,\underline{i}')}^{-1}\,=\,
\mathcal{O}_{1}(m_{1}')\otimes \cdots \otimes \mathcal{O}_{n-1}(m_{n-1}').$ Thus, using
\cite[p.~464, Theorem 3.1]{LT} it follows that $m_{j}'\,\ge\, 1$ for all $1\,\le\, j\,\le\, n-1.$ By
Proposition \ref{lem5.3}, we have $m_{j+1}\,=\,m_{j}'$ for all $2\,\le \,j\,\le\, n.$
Therefore, by \cite[p. 464, Theorem 3.1]{LT}, we conclude that $Z(w,\,\underline{i})$ is Fano.
\end{proof}

\begin{lemma}\label{lem5.10}
Let $G$ be a simple algebraic group such that rank of $G$ is at least two and $c$ be a Coxeter element. Let $c\,=\,s_{i_{1}}\cdots s_{i_{n}}$ be a reduced expression. For
$1\,\le\, r\,\le\, n-1,$ let
$$J_{r}\,:=\,\{\alpha\,\in\, S\setminus \{\alpha_{i_{1}},\,\cdots,\,\alpha_{i_{r-1}}\}\, \,\big\vert\,\,
\langle \sum_{j=r}^{n}\alpha_{i_{j}},\,\alpha\rangle\,=\,1\}.$$ Then $\alpha_{i_{r}}\,\in\, J_{r}$ for all $1\,\le\, r
\,\le\, n-1$ if and only if $c^{-1}(\sum_{t=1}^{n}\alpha_{t})\,\in\, -S.$ 
\end{lemma}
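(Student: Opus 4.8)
The plan is to peel off the simple reflection $s_{i_1}$ and argue by induction on the rank of $G$, mirroring the structure of the proof of Theorem \ref{prop1.4}. First I would reformulate the condition $c^{-1}(\sum_{t=1}^{n}\alpha_{t})\in -S$ in terms of the sets $J_r$: since $c=s_{i_1}\cdots s_{i_n}$ with all $i_j$ distinct, applying $c^{-1}$ to $\sum_t\alpha_t$ and tracking how each $s_{i_j}^{-1}$ acts, the statement $c^{-1}(\sum\alpha_t)\in -S$ is equivalent to a chain of conditions, one for each step $r$, saying that after removing $\alpha_{i_1},\dots,\alpha_{i_{r-1}}$ the ``partial sum'' $\sum_{j\ge r}\alpha_{i_j}$ pairs with $\alpha_{i_r}$ to give $1$. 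Concretely, by Proposition \ref{lem5.3} the exponents of $K_{Z(c,\underline{i})}^{-1}$ are $m_r=\langle\sum_{j=r}^{n}\alpha_{i_j},\alpha_{i_r}\rangle$ (the ``Case 1'' formula applies at every index because $c$ is a Coxeter element, so no repetition occurs), and I would phrase the whole argument around showing that $\alpha_{i_r}\in J_r$ for all $r\le n-1$ is equivalent to $m_r=1$ for all such $r$ together with the correct behaviour at $r=n$.

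Next I would run the induction. For the base case $n=2$ the claim is a direct check on $A_2$, $B_2$, $G_2$: in each case one lists the two Coxeter elements and verifies the equivalence by hand. For the inductive step, set $v=s_{i_1}c=s_{i_2}\cdots s_{i_n}$, a Coxeter element of the Levi $L$ associated with $S'=S\setminus\{\alpha_{i_1}\}$, with reduced expression $\underline{i}'=(i_2,\dots,i_n)$. The key point is that the condition ``$\alpha_{i_r}\in J_r$ for all $1\le r\le n-1$'' splits: the $r=1$ condition says exactly $\langle\sum_{t=1}^n\alpha_t,\alpha_{i_1}\rangle=1$, equivalently $\alpha_{i_1}$ is an end vertex of the Dynkin diagram and the diagram of $S'$ is connected; and the conditions for $2\le r\le n-1$ are, after noting $s_{i_1}(\sum_{t=1}^n\alpha_t)=\sum_{t\neq i_1}\alpha_t$ when $\alpha_{i_1}$ is an end vertex, precisely the analogous conditions $\alpha_{i_r}\in J'_{r-1}$ for the Coxeter element $v$ of the rank-$(n-1)$ group with simple roots $S'$. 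By the induction hypothesis applied to $v$, the latter is equivalent to $v^{-1}(\sum_{t\neq i_1}\alpha_t)\in -S'$. Finally I would reassemble: $c^{-1}(\sum_t\alpha_t)=v^{-1}s_{i_1}(\sum_t\alpha_t)=v^{-1}(\sum_{t\neq i_1}\alpha_t)$, so $c^{-1}(\sum_t\alpha_t)\in -S'\subset -S$ iff the induction hypothesis holds iff all the $J_r$-conditions for $r\ge 2$ hold; combined with the $r=1$ condition being equivalent to $\alpha_{i_1}$ end vertex, this gives the full equivalence.

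The main obstacle I anticipate is the bookkeeping at $r=1$: one must show that the single condition $\alpha_{i_1}\in J_1$ is genuinely equivalent to ``$\alpha_{i_1}$ is an end vertex of the Dynkin diagram of $S$ \emph{and} $S'$ is connected'', because only under connectedness of $S'$ does the induction hypothesis for $v$ even make sense, and only when $\alpha_{i_1}$ is an end vertex does $s_{i_1}(\sum_{t=1}^n\alpha_t)=\sum_{t\neq i_1}\alpha_t$ hold cleanly. The subtlety is the ``only if'' direction: one must rule out the possibility that $\alpha_{i_1}\in J_1$ holds but $\alpha_{i_1}$ has two neighbours (so that removing it disconnects the diagram), which I would handle by observing that $\langle\sum_{t=1}^n\alpha_t,\alpha_{i_1}\rangle = 2 - \#\{\text{neighbours of }\alpha_{i_1}\text{, weighted by Cartan integers}\}$, so $\langle\sum\alpha_t,\alpha_{i_1}\rangle=1$ forces exactly one neighbour with Cartan integer $-1$, hence an end vertex; then connectedness of $S\setminus\{\alpha_{i_1}\}$ is automatic from connectedness of $S$. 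Everything else reduces to the formula of Proposition \ref{lem5.3} and routine manipulation of the Weyl-group action on $\sum_t\alpha_t$.
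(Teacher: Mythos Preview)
Your proposal is correct and follows essentially the same approach as the paper: induct on the rank by peeling off $s_{i_1}$, observe that $\alpha_{i_1}\in J_1$ forces $S\setminus\{\alpha_{i_1}\}$ to be connected, and then apply the induction hypothesis to $c'=s_{i_1}c$ acting on $\sum_{t\neq i_1}\alpha_t$. Your write-up is in fact more detailed than the paper's, which compresses the $r=1$ step into a single sentence; the one imprecision worth tightening is that ``$\alpha_{i_1}\in J_1$'' is not literally equivalent to ``$\alpha_{i_1}$ is an end vertex'' in non-simply-laced types (e.g.\ $\alpha_2$ in $B_2$), so for the reverse implication you should argue directly, as in the proof of Lemma~\ref{lem5.9}(3), that if $\langle\sum_t\alpha_t,\alpha_{i_1}\rangle\le 0$ then the $\alpha_{i_1}$-coefficient of $c^{-1}(\sum_t\alpha_t)$ stays positive, contradicting membership in $-S$.
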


\begin{proof} We argue by induction on the rank of $G.$ Note
that $\alpha_{i_{1}}\,\in\, J_{1}$ if and only if
$\langle \sum_{j=1}^{n}\alpha_{i_{j}},\,\alpha_{i_{1}}\rangle\,=\,1.$ Thus, we have
$\alpha_{i_{1}}\in J_{1}$ if and only if the Dynkin diagram of $S\setminus\{\alpha_{i_{1}}\}$ is connected. Consider $c'\,=\,s_{i_{1}}c.$ Then, $c'\,=\,
s_{i_{2}}\cdots s_{i_{n}}$ is a reduced expression. Further, by induction on the rank of $G$
it is deduced that
$\alpha_{i_{r}}\,\in\, J_{r}$ for all $2\,\le \,r\,\le\, n-1$ if and only if
$(c')^{-1}(\sum_{j=2}^{n}\alpha_{i_{j}})\,\in\, -(S\setminus \{\alpha_{i_{1}}\}).$ Hence, we
have $\alpha_{i_{r}}\,\in\, J_{r}$ for all $1\,\le\, r \,\le\, n-1$ if and only if
$c^{-1}(\sum_{t=1}^{n}\alpha_{t})\,\in\, -S$.
\end{proof}

\begin{lemma}\label{lem5.9}
Let $G$ be a simple algebraic group such that rank of $G$ is at least two. Recall that $J_{1}\,=\,\{\alpha\,\in\, S\, \big\vert\, \langle \sum_{t=1}^{n}\alpha_{t},\,
\alpha\rangle\,=\,1\}.$ Then we have the following:
\begin{itemize}
\item[(1)] If $G$ is simply-laced, then the cardinality of $J_{1}$ is at least two. 
	
\item [(2)] If $G$ is not simply-laced, then the cardinality of $J_{1}$ is one.
	
\item [(3)] Fix an integer $1\,\le\, i\,\le \,n.$ If $c$ is a Coxeter element such that $c^{-1}(\sum_{t=1}^{n}\alpha_{t})
\,=\,-\alpha_{i},$ then we have $R^{+}(c^{-1})\cap S\,=\, J_{1}\cap (S\setminus\{\alpha_{i}\}).$ 
\end{itemize}
\end{lemma}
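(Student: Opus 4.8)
Parts (1) and (2) reduce to a local computation in the Dynkin diagram. Since $\langle\cdot,\cdot\rangle$ is linear in the first slot,
\[
\Big\langle \sum_{t=1}^{n}\alpha_{t},\ \alpha_{j}\Big\rangle \;=\; 2+\sum_{t\neq j}\langle\alpha_{t},\,\alpha_{j}\rangle,
\]
and each $\langle\alpha_{t},\,\alpha_{j}\rangle$ with $t\neq j$ is a non-positive integer which vanishes unless $\alpha_{t}$ is joined to $\alpha_{j}$ in the diagram. Hence $\alpha_{j}\in J_{1}$ exactly when the neighbours of $\alpha_{j}$ contribute $-1$ in total; as each neighbour contributes at most $-1$, this forces $\alpha_{j}$ to be an end node of the diagram whose unique neighbour $\alpha_{k}$ satisfies $\langle\alpha_{k},\,\alpha_{j}\rangle=-1$. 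In the simply-laced case all bonds are simple, so $\alpha_{j}\in J_{1}$ if and only if $\alpha_{j}$ is a leaf of the Dynkin diagram; since that diagram is a tree with at least two vertices it has at least two leaves, which gives (1). For (2) I would apply the same criterion to each of the four non-simply-laced diagrams $B_{n},\,C_{n},\,F_{4},\,G_{2}$ and read off $J_{1}$ case by case.

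For (3) I propose an induction on $n={\rm rank}(G)$, the rank-$2$ groups $A_{2}$, $B_{2}=C_{2}$, $G_{2}$ being checked directly. For the inductive step fix a reduced expression $c=s_{i_{1}}\cdots s_{i_{n}}$. By Lemma~\ref{lem5.10} the hypothesis $c^{-1}\big(\sum_{t}\alpha_{t}\big)=-\alpha_{i}$ forces $\alpha_{i_{r}}\in J_{r}$ for all $1\leq r\leq n-1$; in particular $\alpha_{i_{1}}\in J_{1}$, so by the analysis in (1)--(2) the node $\alpha_{i_{1}}$ is an endpoint of the Dynkin diagram, and applying $s_{i_{1}},s_{i_{2}},\ldots$ successively to $\sum_{t}\alpha_{t}$, using $\alpha_{i_{r}}\in J_{r}$ at each step, gives $c^{-1}\big(\sum_{t}\alpha_{t}\big)=-\alpha_{i_{n}}$, so $\alpha_{i}=\alpha_{i_{n}}$. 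Since $\alpha_{i_{1}}$ is a leaf, the subdiagram $S'=S\setminus\{\alpha_{i_{1}}\}$ is connected, $c'=s_{i_{2}}\cdots s_{i_{n}}$ is a Coxeter element of the associated Weyl group of rank $n-1$, and the same computation yields $(c')^{-1}\big(\sum_{\alpha\in S'}\alpha\big)=-\alpha_{i_{n}}$, so the inductive hypothesis applies to $c'$.

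The heart of the step is to compare both sides of the desired identity with their $S'$-analogues. Writing $c^{-1}=(c')^{-1}s_{i_{1}}$ and using that $(c')^{-1}(\alpha_{i_{1}})$ equals $\alpha_{i_{1}}$ plus a non-negative integer combination of the simple roots of $S'$, a short case analysis of $c^{-1}(\alpha_{j})$ for each simple $\alpha_{j}$ shows: $c^{-1}(\alpha_{i_{1}})<0$; for $\alpha_{j}\in S'$ adjacent to $\alpha_{i_{1}}$ the root $c^{-1}(\alpha_{j})$ has positive $\alpha_{i_{1}}$-coordinate, hence is positive; and for $\alpha_{j}\in S'$ not adjacent to $\alpha_{i_{1}}$ one has $c^{-1}(\alpha_{j})=(c')^{-1}(\alpha_{j})$. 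Consequently
\[
R^{+}(c^{-1})\cap S \;=\; \{\alpha_{i_{1}}\}\ \sqcup\ \big(\,(R^{+}((c')^{-1})\cap S')\setminus\{\alpha_{p}\}\,\big),
\]
where $\alpha_{p}$ is the unique neighbour of $\alpha_{i_{1}}$. On the other side, for $\alpha\in S'$ with $\alpha\neq\alpha_{p}$ one has $\langle\sum_{t}\alpha_{t},\alpha\rangle=\langle\sum_{\beta\in S'}\beta,\alpha\rangle$, while $\alpha_{p}\notin J_{1}$ because $\alpha_{p}$ is adjacent both to $\alpha_{i_{1}}$ and to a node of $S'$, so its pairing with $\sum_{t}\alpha_{t}$ is at most $0$; together with $\alpha_{i_{1}}\in J_{1}$ this gives $J_{1}=\{\alpha_{i_{1}}\}\sqcup\big(J_{1}'\setminus\{\alpha_{p}\}\big)$, with $J_{1}'$ the $S'$-analogue of $J_{1}$. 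Substituting the inductive hypothesis $R^{+}((c')^{-1})\cap S'=J_{1}'\cap(S'\setminus\{\alpha_{i_{n}}\})$ into the first display and intersecting both descriptions with $S\setminus\{\alpha_{i_{n}}\}$, the two sides coincide, which is (3).

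I expect the last comparison to be the delicate point: one must follow the single exceptional neighbour $\alpha_{p}$ of $\alpha_{i_{1}}$ simultaneously through the $R^{+}(c^{-1})$ side and the $J_{1}$ side and check that it is removed from each in the same way. The remaining ingredients --- the identification $\alpha_{i}=\alpha_{i_{n}}$, the connectedness of $S'$, and the signs of the roots $c^{-1}(\alpha_{j})$ --- are routine once the endpoint structure forced by $\alpha_{i_{1}}\in J_{1}$ is available.
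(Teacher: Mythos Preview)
Your treatment of (1) and (2) matches the paper's: both reduce to the Dynkin classification, and your explicit criterion ``$\alpha_j\in J_1$ iff $\alpha_j$ is a leaf whose unique neighbour $\alpha_k$ satisfies $\langle\alpha_k,\alpha_j\rangle=-1$'' is exactly the computation the paper leaves implicit.

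For (3) your argument is correct but genuinely different from the paper's. The paper handles the two inclusions asymmetrically: it proves $R^{+}(c^{-1})\cap S\subseteq J_1\cap(S\setminus\{\alpha_i\})$ by a direct root-theoretic argument (for $\alpha_j$ with $c^{-1}(\alpha_j)<0$, one checks that $\langle\sum_t\alpha_t,\alpha_j\rangle\le 0$ would make the $\alpha_j$-coefficient of $c^{-1}(\sum_t\alpha_t)$ positive, while $\langle\sum_t\alpha_t,\alpha_j\rangle\ge 2$ would force $s_j(\sum_t\alpha_t)\notin R$, contradicting that $\sum_t\alpha_t$ is a root), and only the reverse inclusion uses induction on the rank. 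You instead run a single parallel induction: after peeling off the leaf $\alpha_{i_1}$ you compute precisely how each side transforms under the passage $S\mapsto S'=S\setminus\{\alpha_{i_1}\}$, obtaining $R^+(c^{-1})\cap S=\{\alpha_{i_1}\}\sqcup\big((R^+((c')^{-1})\cap S')\setminus\{\alpha_p\}\big)$ and $J_1=\{\alpha_{i_1}\}\sqcup(J_1'\setminus\{\alpha_p\})$, and then invoke the inductive hypothesis for $c'$. The key observation that $(c')^{-1}(\alpha_{i_1})$ is a positive root with $\alpha_{i_1}$-coefficient $1$ (since $c'$ is supported on $S'$) is what makes your sign analysis of $c^{-1}(\alpha_j)$ go through. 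Your route is more structural and symmetric, at the cost of having to track the special neighbour $\alpha_p$ through both computations; the paper's direct half of the argument is shorter and avoids this bookkeeping by exploiting that $\sum_t\alpha_t$ is itself a root.
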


\begin{proof}
Proof of (1) and (2): Follows from the classification of connected Dynkin diagram (see \cite[p. 58]{Hum1}).

Proof of (3): We assume that rank of $G$ is at least two.

Let $\alpha_{j}\in R^{+}(c^{-1})\cap S.$ Note that since $c^{-1}(\sum_{t=1}^{n}\alpha_{t})=-\alpha_{i},$ we have $\ell(cs_{i})=\ell(c)-1,$ and $i_{n}=i.$ Since the Dynkin diagram of $G$ is connected, there is $1\le j\le n-1$ such that $s_{i_{j}}$ does not commute with $s_{i}.$ So, we have $\alpha_{i}\notin R^{+}(c^{-1}).$ Hence, we have $j\neq i.$ Now, if $\langle \sum_{t=1}^{n}\alpha_{t},\alpha_{j}\rangle\le 0,$ then the coefficient of $\alpha_{j}$ in $c^{-1}(\sum_{t=1}^{n}\alpha_{t})=(s_{j}c)^{-1}s_{j}(\sum_{t=1}^{n}\alpha_{t})$ is the coefficient of $\alpha_{j}$ in $s_{j}(\sum_{t=1}^{n}\alpha_{t})$ which is at least one. This is a contradiction as $c^{-1}(\sum_{t=1}^{n}\alpha_{t})=-\alpha_{i}.$ Thus, we have $\langle\sum_{t=1}^{n}\alpha_{t},\alpha_{j}\rangle\ge 1.$ If $\langle \sum_{t=1}^{n}\alpha_{t},\alpha_{j} \rangle\ge 2,$ then $s_{j}(\sum_{t=1}^{n}\alpha_{t})\notin R.$ Therefore, we have $\alpha_{j}\in J_{1}\cap (S\setminus \{\alpha_{i}\}).$

Conversely, let $\alpha_{j}\,\in\, J_{1}\cap (S\setminus\{\alpha_{i}\}).$ Then we have $\langle \sum_{t=1}^{n}\alpha_{t},\,\alpha_{j} 
\rangle\,=\,1.$ Let $c\,=\,s_{i_{1}}\cdots s_{i_{n}}$ be a reduced expression. By using Lemma \ref{lem5.10} we have $\alpha_{i_{1}}\,\in\, 
J_{1}.$ If $i_{1}\,=\,j,$ we are done. So, assume that $j\,\neq\, i_{1}.$ Let $c'\,=\,s_{i_{1}}c.$ Then we have 
$(c')^{-1}(\sum_{k=2}^{n}\alpha_{i_{k}})\,=\,-\alpha_{i}.$ Further, since $\alpha_{j}\,\in\, J_{1}$ and $i_{1}\,\neq \,j$ we have $\langle 
\sum_{k=2}^{n}\alpha_{i_{k}},\,\alpha_{j} \rangle\,\ge\, 1.$ Therefore, by the above argument we have $\alpha_{j}\,\in\, J_{2}\cap 
(S\setminus\{\alpha_{i},\,\alpha_{i_1}\}).$ By induction on the rank of $G,$ we have $$\alpha_{j}\,\in\, J_{2}\cap 
(S\setminus\{\alpha_{i},\,\alpha_{i_{1}}\})\,=\,R^{+}((c')^{-1})\cap (S\setminus\{\alpha_{i_{1}}\}).$$ Further, since $\alpha_{j}\,\in\, J_{1},$ 
it follows that $\langle \alpha_{i_1},\,\alpha_{j}\rangle\,=\,0.$ Therefore, we have $\alpha_{j}\,\in\, R^{+}(c^{-1}).$
\end{proof}

\begin{lemma}\label{lem 5.9}
Let $G$ be a simple algebraic group such that rank of $G$ is at least two. Then the following is
the (non-constructive) list of Coxeter elements $c$ satisfying 
$c^{-1}(\sum_{t=1}^{n}\alpha_{t})\,\in\, -S$:
\begin{itemize}
\item[(1)] If $G$ is simply-laced, for every simple root $\alpha_{i},$ there is a unique Coxeter element
$c_{i}$ such that $c_{i}^{-1}(\sum_{t=1}^{n}\alpha_{t})\,=\,-\alpha_{i}.$
		
\item[(2)] If $G$ is not simply-laced, for every short simple root $\alpha_{i}$, there is a unique Coxeter element $c_{i}$ such
that $c_{i}^{-1}(\sum_{t=1}^{n}\alpha_{t})\,=\,-\alpha_{i}.$ For long simple root $\alpha_{i}$ there is no such Coxeter element.
\end{itemize}
\end{lemma}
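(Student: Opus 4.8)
The plan is to translate the problem, via Lemma \ref{lem5.10}, into a combinatorial ``leaf-peeling'' problem on the Dynkin diagram of $G$ and then analyse how the peeling order is constrained. Write $\theta := \sum_{t=1}^{n}\alpha_{t}$. First I would record the elementary fact that if $c$ is a Coxeter element with $c^{-1}(\theta) = -\alpha_{i}$, then $c(\alpha_{i}) = -\theta$ is a negative root, so $s_{i}$ is a right descent of $c$; since ${\rm supp}(c) = S$ and $\ell(c) = n$, it follows that $c$ has a reduced expression $c = s_{i_{1}}\cdots s_{i_{n-1}}s_{i}$ with $\{\alpha_{i_{1}},\ldots,\alpha_{i_{n-1}}\} = S\setminus\{\alpha_{i}\}$. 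Applying Lemma \ref{lem5.10} to this reduced expression, and tracing through its inductive proof (using $\langle\theta,\alpha_{i_{1}}\rangle=1$ together with $c^{-1}(\theta)=(s_{i_1}c)^{-1}(\theta-\alpha_{i_1})$) to pin down which element of $-S$ one lands in, one obtains: the Coxeter elements $c$ with $c^{-1}(\theta) = -\alpha_{i}$ are exactly the products $s_{i_{1}}\cdots s_{i_{n-1}}s_{i}$ coming from an ordering $(i_{1},\ldots,i_{n-1})$ of $S\setminus\{\alpha_{i}\}$ with $\alpha_{i_{r}}\in J_{r}$ for every $1\le r\le n-1$, two such orderings yielding the same $c$ precisely when they differ by transpositions of commuting simple reflections.

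Next, the combinatorial core. Unwinding the definition of $J_{r}$, the root $\sum_{j=r}^{n}\alpha_{i_{j}}$ is the sum of the simple roots in the subdiagram $S_{r-1} := S\setminus\{\alpha_{i_{1}},\ldots,\alpha_{i_{r-1}}\}$, so $\langle \sum_{j=r}^{n}\alpha_{i_{j}},\,\alpha_{i_{r}}\rangle = 2 + \sum_{\beta}\langle\beta,\alpha_{i_{r}}\rangle$, the sum running over the neighbours $\beta$ of $\alpha_{i_{r}}$ inside $S_{r-1}$; since each such Cartan integer is a negative integer, this equals $1$ if and only if $\alpha_{i_{r}}$ is a leaf of the (connected) tree $S_{r-1}$ whose unique surviving neighbour $\beta$ satisfies $\langle\beta,\alpha_{i_{r}}\rangle = -1$, equivalently $\alpha_{i_{r}}$ is not strictly shorter than $\beta$. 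Thus a valid ordering peels leaves off the shrinking Dynkin tree, always keeping $\alpha_{i}$. Rooting this tree at $\alpha_{i}$, the key claim is that in any valid peeling every vertex $x\ne\alpha_{i}$ is deleted before its parent $p(x)$: otherwise pick a minimal offending $x$; when $x$ is deleted its only surviving neighbour must be a child (its parent having been deleted earlier), whence $p(x)$ was itself deleted with a surviving child-neighbour, and iterating up the path from $x$ to $\alpha_{i}$ forces $\alpha_{i}$ to have been deleted, a contradiction. Two consequences follow. Uniqueness: the valid peelings are exactly the linear extensions of the rooted-tree order on $S\setminus\{\alpha_{i}\}$, and any two of them differ by transpositions of incomparable vertices, which are non-adjacent in the diagram and hence give commuting reflections, so all valid peelings produce one and the same Coxeter element. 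Existence: a valid peeling exists (e.g.\ delete vertices in order of non-increasing distance from $\alpha_{i}$) if and only if $\langle\alpha_{p(x)},\alpha_{x}\rangle = -1$ for every $x\ne\alpha_{i}$, i.e.\ every multiple bond of the diagram has its short endpoint closer to $\alpha_{i}$ than its long endpoint.

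It remains to read off this last condition from the classification of connected Dynkin diagrams. If $G$ is simply-laced there are no multiple bonds, so the condition is vacuous and holds for every $\alpha_{i}$, which gives (1). If $G$ is not simply-laced, its diagram has exactly one multiple bond $\{u,v\}$ with $\alpha_{u}$ long and $\alpha_{v}$ short, and deleting this edge splits the tree into a subtree $D_{u}\ni u$ whose simple roots are all long and a subtree $D_{v}\ni v$ whose simple roots are all short; since $\{u,v\}$ is the only edge joining $D_{u}$ and $D_{v}$, the short endpoint $v$ is closer to $\alpha_{i}$ than $u$ exactly when $\alpha_{i}\in D_{v}$, i.e.\ exactly when $\alpha_{i}$ is short. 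This gives (2). The main obstacle is the peeling-monotonicity claim together with its uniqueness consequence, i.e.\ passing from the per-step local condition ``$\alpha_{i_{r}}\in J_{r}$'' to a clean global statement about the position of the unique multiple bond relative to $\alpha_{i}$; once that is in hand, the verification for the four non-simply-laced types is routine bookkeeping.
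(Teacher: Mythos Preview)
Your argument is correct and takes a genuinely different route from the paper's proof. The paper argues uniqueness by invoking Lemma~\ref{lem5.9}(3) to identify a common first letter $s_{i_1}$ of any two candidate Coxeter elements and then inducts on rank; for existence it uses Lemma~\ref{lem5.9}(1) plus induction in the simply-laced case, and an explicit case-by-case construction for $B_n$, $C_n$, $F_4$, $G_2$ in the non-simply-laced case, with non-existence for long $\alpha_i$ coming from the observation that $\sum_t\alpha_t$ is a short root and $W$ preserves root lengths. You instead route everything through Lemma~\ref{lem5.10}, recast the condition $\alpha_{i_r}\in J_r$ as ``peel a leaf whose surviving neighbour is not strictly longer'', prove that any valid peeling is a linear extension of the tree rooted at $\alpha_i$, and read off both existence and uniqueness from the single criterion $\langle\alpha_{p(x)},\alpha_x\rangle=-1$ for every edge of the rooted tree. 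Your approach is more uniform---it treats all non-simply-laced types at once via the position of the unique multiple bond, and replaces the per-type constructions by the observation that linear extensions of a rooted tree always exist and any two differ by swaps of non-adjacent (hence commuting) reflections---at the cost of the extra monotonicity lemma. The paper's approach, in turn, gives non-existence for long $\alpha_i$ almost for free via the root-length argument, whereas you obtain it as a special case of the bond-position criterion. Two small points worth tightening in your write-up: the ``minimal offending $x$'' is not actually used as stated, since your iteration-up-the-path argument works from any offending $x$; and the sentence ``valid peelings are exactly the linear extensions'' should be phrased conditionally on the edge-length criterion holding, since otherwise there are no valid peelings at all.
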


\begin{proof}
Uniqueness:\,
Let $c_{i},\,c_{i}'$ be two Coxeter element such that $$c_{i}^{-1}(\sum_{t=1}^{n}\alpha_{t})\,=\,-\alpha_{i}
\,=\,c_{i}'^{-1}(\sum_{t=1}^{n}\alpha_{t}).$$ By Lemma \ref{lem5.9}(3), there is an $i_{1}\,\neq\, i$ such that
$\alpha_{i_{1}}\,\in\, R^{+}(c_{i}^{-1})\cap J_{1}\,=\,R^{+}(c_{i}'^{-1})\cap J_{1}.$ Hence, $s_{i_{1}}c$ and $s_{i_{1}}c_{i}'$ are
two Coxeter elements of the Weyl group $W_{S\setminus\{\alpha_{i_{1}}\}}$ of
smaller rank satisfying $$(s_{i_{1}}c_{i})^{-1}(\sum_{t\neq i_{1}}^{n}\alpha_{t})
\,=\,-\alpha_{i}\,=\, (s_{i_{1}}c_{i}')^{-1}(\sum_{t\neq i_{1}}^{n}\alpha_{t}).$$
Therefore, by induction on the rank it follows that $s_{i_{1}}c_{i}\,=\,s_{i_{1}}c_{i}'.$ Thus, we have $c_{i}\,=\,c_{i}'.$
	
Proof of (1):\, Fix a simple root $\alpha_{i}.$
	By Lemma \ref{lem5.9}(1) it follows that there is $\alpha_{i_{1}}\,\neq\, \alpha_{i}$ such that
$\langle \sum_{t=1}^{n}\alpha_{t},\,\alpha_{i_{1}} \rangle \,=\,1.$ Therefore, the Dynkin diagram of $S\setminus\{\alpha_{i_{1}}\}$ is connected. Now by induction on the rank of $G,$ there is a unique Coxeter element $c_{i}'$ of the Weyl group corresponding to $S\setminus\{\alpha_{i_{1}}\}$ such that $c_{i}'^{-1}(\sum_{t\neq i_1}^{n}\alpha_{t})
\,=\,-\alpha_{i}.$ Therefore, $c_{i}\,=\,s_{i_{1}}c_{i}'$ is a unique Coxeter element such that $c_{i}^{-1}(\sum_{t=1}^{n}\alpha_{t})=-\alpha_{i}.$
	
	Proof of (2):\, We prove by case by case analysis. Since $G$ is
simple, the Dynkin diagram of $G$ is connected. Therefore, $\sum_{t=1}^{n}\alpha_{t}$ is a
root. Further, by the classification of Dynkin diagram, $\sum_{t=1}^{n}\alpha_{t}$ is a
short root whenever $G$ is not simply-laced. Since $\langle \cdot,\,
\cdot \rangle$ is $W$-invariant, for any long simple root $\alpha_{i}$ there is no
Coxeter element $c$ such that $c^{-1}(\sum_{t=1}^{n}\alpha_{t})\,=\,-\alpha_{i}$.

{Case I. $G$ is of type $B_n$ ($n\,\ge\, 2$):}\,
	Note that $\sum_{t=1}^{n}\alpha_{t}$ is the highest short root and $\alpha_{n}$ is the only simple short root for type $B_{n}.$
Then $c_{n}\,=\,s_{1}\cdots s_{n}$  is a unique Coxeter element such that $c_{n}^{-1}(\sum_{t=1}^{n}\alpha_{t})\,=\, -\alpha_{n}.$ 
	
	{Case II. $G$ is of type $C_n$ ($n\ge 3$):}\, 
	Fix an integer $1\le i\le n-1.$ Note that $\alpha_{n}\in J_{1},$ i.e., $\langle \sum_{t=1}^{n}\alpha_{t},\alpha_{n}\rangle=1.$ By (1) there is a unique Coxeter element $c_{i}'\,=\,s_{i_{1}}s_{i_{2}}\cdots s_{i_{n-1}}$ of the Weyl group corresponding to $S\setminus \{\alpha_{n}\}$ such that $c_{i}'^{-1}(\sum_{t=1}^{n-1}\alpha_{t})\,=\,-\alpha_{i}.$ Therefore, we have $c_{i}
\,=\,s_{n}c_{i}'$ is a unique Coxeter element such that $c_{i}^{-1}(\sum_{t\neq i_1}^{n}\alpha_{t})\,=\,-\alpha_{i}.$ 
	
	{Case III. $G$ is of type $F_4$:}\, Note that $\sum_{t=1}^{4}\alpha_{t}$ is a short root for type $F_{4}.$ So, any Coxeter element $c$ satisfying $c^{-1}(\sum_{t=1}^{4}\alpha_{t})\in -S,$ is either $c^{-1}(\sum_{t=1}^{4}\alpha_{t})=-\alpha_{3}$ or $c^{-1}(\sum_{t=1}^{4}\alpha_{t})=-\alpha_{4}.$ Observe that $c_{4}=s_1s_2s_3s_4$ is a unique Coxeter element such that $c_{4}^{-1}(\sum_{t=1}^{4}\alpha_{t})=-\alpha_{4},$ and $c_{3}\,=\,s_1s_2s_4s_3$ is a unique  Coxeter element such that $c_{3}^{-1}(\sum_{t=1}^{4}\alpha_{t})
\,=\,-\alpha_{3}.$
	
{Case IV. $G$ is of type $G_2$ :}\, Note that $\alpha_{1}+\alpha_{2}$ is a short root and  $\alpha_{1}$ is the only short simple root for $G_2.$ Therefore, $c_{1}=s_2s_1$ is a unique  Coxeter element such that $c_{1}^{-1}(\alpha_{1}+\alpha_{2})=-\alpha_{1}.$
\end{proof}

\subsection{Minuscule Weyl group elements}
Recall that a fundamental weight $\omega_{m}$ is said to be minuscule if $\langle \omega_{m}, \beta \rangle\le 1$ for all $\beta \in R^{+}.$
Let $P$ be a minuscule maximal parabolic subgroup of $G,$ i.e., $P\,=\,P_{S\setminus\{\alpha_{m}\}},$ where $\omega_{m}$ is a
minuscule fundamental weight. The elements of $W^{S\setminus \{\alpha_{m}\}}$ are called minuscule Weyl group elements. Now we prove that for any reduced expression $\underline{i}$ of a minuscule Weyl group element $w,$ the anti-canonical line bundle
on $Z(w,\,\underline{i})$ is globally generated. As a consequence, we prove that $Z(w,\,\underline{i})$ is weak Fano. 

Take any $w\,\in\, W^{S\setminus\{\alpha_{m}\}}$ such that $w\,\neq\, id.$ Without loss of generality we may assume that ${\rm supp}(w)
\,=\,S.$ Indeed, if 
${\rm supp}(w)\,\neq\, S,$ consider the subgroup $G_{w}$ of $G$ generated by the $U_{\pm \alpha},$ where $\alpha \,\in\, {\rm supp}(w).$ This is the derived 
subgroup of the Levi subgroup $L_{{\rm supp}(w)},$ and hence it is a semi-simple subgroup of $G,$ normalized by $T$ and contains a 
representative of $w.$ Consider the associated Schubert variety $X(w)\,\subset \,G/P.$ Note that $P\bigcap G_{w}$ is a minuscule maximal parabolic 
subgroup of $G_{w},$ and we have
$$X(w)\,=\,\overline{BwP}/P\,\simeq\, \overline{(B\cap G_{w})w(P\cap G_{w})}/(P \cap G_{w})\,\subset\, G_{w} /(P\cap 
G_{w}).$$

Let $w\,=\,s_{i_{1}}s_{i_{2}}\cdots s_{i_{r}}$ be a reduced expression and $\underline{i}\,=\,(i_{1},i_{2},\, \cdots ,\,i_{r}).$  Then

\begin{theorem}\label{lem1.3}
	The anti-canonical line bundle $K_{Z(w,\underline{i})}^{-1}$ is globally generated.
	In particular, we have
	$$H^j(Z(w,\,\underline{i}),\, K_{Z(w,\underline{i})}^{-1})\,=\,0$$ for all $j\,\ge\, 1.$
\end{theorem}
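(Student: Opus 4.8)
The plan is to prove the global generation of $K_{Z(w,\underline{i})}^{-1}$ by induction on $\ell(w) = r$, using the inductive set-up of Lemma~\ref{lem3.1} together with the key combinatorial feature of reduced expressions of minuscule Weyl group elements: no simple reflection is repeated, and moreover the support of $w$ is connected in a tree-like way. By Lemma~\ref{lem5.2}, $K_{Z(w,\underline{i})}^{-1} \simeq \mathcal{O}_1(\alpha_{i_1})\otimes\cdots\otimes\mathcal{O}_r(\alpha_{i_r})$, so by \cite[p.~465, Corollary 3.3]{LT} it suffices to show that each exponent $m_j$ appearing when this bundle is rewritten in the basis $\{\mathcal{O}_j(1)\}$ is nonnegative. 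By Proposition~\ref{lem5.3}, since $w$ is reduced with all $i_k$ distinct (this is where minusculity enters: a minuscule $w\in W^{S\setminus\{\alpha_m\}}$ is fully commutative, hence every reduced expression has distinct letters), there is no integer $j+1\le q\le r$ with $i_q=i_j$, so $m_j = \langle \sum_{k=j}^{r}\alpha_{i_k},\,\alpha_{i_j}\rangle = 2 + \sum_{k=j+1}^{r}\langle\alpha_{i_k},\alpha_{i_j}\rangle$. Thus the whole problem reduces to showing $\sum_{k>j}\langle\alpha_{i_k},\alpha_{i_j}\rangle \ge -2$ for every $j$, i.e. that among the letters appearing strictly to the right of position $j$, at most two are adjacent (in the Dynkin diagram) to $\alpha_{i_j}$, and if two, both are simply-laced neighbours.

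The first step is therefore to establish the combinatorial input: a minuscule Weyl group element $w$ is fully commutative, so each reduced expression has pairwise distinct simple reflections, and the set $\operatorname{supp}(w)$ together with the linear order in which the reflections appear has the property that for each position $j$, the simple roots $\alpha_{i_k}$ with $k>j$ that are Dynkin-adjacent to $\alpha_{i_j}$ number at most two (and each contributes $-1$). I would deduce this from the standard description of $W^{S\setminus\{\alpha_m\}}$ for minuscule $\omega_m$ via the ``minuscule poset'' (order ideals in the heap), or more directly from the fact that in a reduced word for a fully commutative element, any two occurrences of adjacent generators $s,t$ with $st\neq ts$ occur with no further $s$ or $t$ in between, which forces a tree structure on the heap whose restriction to any downward-closed set has at most two leaves ``below'' a fixed node along non-commuting edges. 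Granting this, Proposition~\ref{lem5.3} immediately gives $m_j\ge 2 - 2 = 0$, and \cite[p.~465, Corollary 3.3]{LT} yields global generation; the vanishing $H^j(Z(w,\underline{i}),K_{Z(w,\underline{i})}^{-1})=0$ for $j\ge 1$ then follows from \cite[p.~301, Theorem 6]{LLM} (Kawamata–Viehweg-type vanishing for globally generated line bundles on BSDH varieties).

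As an alternative, more self-contained route avoiding the general theory of fully commutative elements, I would argue by induction on $r$ via Lemma~\ref{lem3.1}. Writing $u = s_{i_2}\cdots s_{i_r}$, one checks that $u$ is again a minuscule Weyl group element (it is a sub-word of a reduced expression for $w$ in $W^{S\setminus\{\alpha_m\}}$, and the property of being a minimal coset representative is inherited appropriately), so by induction $K_{Z(u,\underline{i}')}^{-1}$ is globally generated with all higher cohomology vanishing. One then needs: (a) the weights of $H^0(Z(u,\underline{i}'),K_{Z(u,\underline{i}')}^{-1})\otimes\mathbb{C}_{\alpha_{i_1}}$ all pair with $\alpha_{i_1}$ to an integer $\ge -1$, so that by Lemma~\ref{lemma1.3}(2),(4) the higher direct image in Lemma~\ref{lem3.1}(ii) vanishes and the cohomology vanishing propagates; and (b) that the resulting line bundle on $Z(w,\underline{i})$ has nonnegative exponents, which by Lemma~\ref{lem5.2} and Proposition~\ref{lem5.3} is again the inequality $m_j\ge 0$. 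Part (a) is where minusculity is used crucially: the weights of $H^0(Z(u,\underline{i}'),K_{Z(u,\underline{i}')}^{-1})$ lie between the lowest weight $u\cdot 0$ (Proposition~\ref{prop4.5}) and $\sum_{k=2}^r\alpha_{i_k}$, and the constraint $\langle\omega_m,\beta\rangle\le 1$ for all $\beta\in R^+$ bounds how many roots in the relevant range are non-orthogonal to $\alpha_{i_1}$.

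The main obstacle is exactly the combinatorial claim that drives Proposition~\ref{lem5.3} to give $m_j \ge 0$: controlling, for each position $j$ in an arbitrary reduced expression of a minuscule element, the number and type of later letters adjacent to $\alpha_{i_j}$. This is not hard once one invokes full commutativity and the heap/poset description of minuscule cosets, but it does require either quoting that theory (Stembridge's classification of fully commutative elements, or Proctor's minuscule posets) or reproving the relevant structural fact about reduced words by hand. Everything else — the reduction via Lemma~\ref{lem5.2}, the computation of $m_j$ via Proposition~\ref{lem5.3}, the criterion of \cite{LT}, and the cohomology vanishing via \cite{LLM} — is routine given the earlier results in the paper.
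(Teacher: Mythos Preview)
Your central combinatorial claim is false: minuscule Weyl group elements are indeed fully commutative, but ``fully commutative'' does \emph{not} mean that the simple reflections in a reduced expression are pairwise distinct. A counterexample already appears in type $A_3$ for the minuscule weight $\omega_2$: the longest element of $W^{S\setminus\{\alpha_2\}}$ is $w = s_2 s_1 s_3 s_2$, which is fully commutative (only the move $s_1 s_3 = s_3 s_1$ is available) yet has $s_2$ twice. More generally the longest minuscule element has length $\dim G/P_{S\setminus\{\alpha_m\}}$, which exceeds the rank in all nontrivial cases, so repeats are unavoidable. Hence your reduction to the ``no later repeat'' branch of Proposition~\ref{lem5.3}, and the ensuing claim that $m_j = 2 + \sum_{k>j}\langle\alpha_{i_k},\alpha_{i_j}\rangle$, is not valid; Case~2 of Proposition~\ref{lem5.3} genuinely occurs and must be handled. (Your subsidiary assertion that each adjacent letter ``contributes $-1$'' is also not automatic: for the minuscule $\omega_2$ in $B_2$ with $w = s_2 s_1 s_2$ one has $\langle\alpha_1,\alpha_2\rangle=-2$.)

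The paper treats both cases with a single device you never invoke. Since every suffix $v = s_{i_k}\cdots s_{i_r}$ of the reduced word again lies in $W^{S\setminus\{\alpha_m\}}$ and $\omega_m$ is minuscule, one has $\langle s_{i_{l+1}}\cdots s_{i_r}(\omega_m),\alpha_{i_l}\rangle = 1$ at every step, whence $v(\omega_m) = \omega_m - \sum_{l=k}^{r}\alpha_{i_l}$. Plugging this identity into the two formulas of Proposition~\ref{lem5.3} gives $m_{j_0}=1$ when $i_{j_0}$ does not recur to the right, and $m_{j_0}=0$ when it does (using $\langle v_1(\omega_m),\alpha_{i_{j_0}}\rangle = 1$ and $\langle v_2(\omega_m),\alpha_{i_{j_0}}\rangle = -1$ for the two relevant suffixes). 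No heap theory, Stembridge classification, or Proctor posets are needed. Your alternative route via Lemma~\ref{lem3.1} does not rescue the argument: as you yourself note, part~(b) is exactly the inequality $m_j\ge 0$, and part~(a) is asserted but not established --- verifying it requires precisely the minuscule-weight computation above.
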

\begin{proof}
	By descending induction on $1\le j\le r,$ we prove that $m_{j}\ge 0.$ By
using Proposition \ref{lem5.3}, we have $m_r\,=\,2.$ We fix $1\,\le\, j_{0}\,\le\, r-1.$ We
assume that $m_{j}\,\ge\, 0$ for all $j_{0}+1\,\le\, j\,\le\, r.$
	
	{Case 1:} There is no $j_{0}+1\,\le\, k\,\le\, r,$ such that $i_{k}\,=\,i_{j_{0}}.$ Let $m\,=\,i_{r}.$ Let $v\,=\,s_{i_{j_{0}+1}}\cdots
s_{i_{r}}.$ Now we have $\langle v(\omega_{m}),
\,\alpha_{i_{0}}\rangle\,=\,1,$ as $\omega_{m}$ is minuscule. Further, since $\omega_{m}$
is minuscule, we have $v(\omega_{m})\,=\,\omega_{m}-\sum_{k=j_{0}+1}^{r}\alpha_{i_{k}}.$
Therefore, since $\langle \sum_{k=j_{0}+1}^{r}\alpha_{i_{k}},\, \alpha_{i_{j_{0}}}\rangle
\,=\,-1,$ by Proposition \ref{lem5.3} we have $m_{j_{0}}\,=\,\langle
\sum_{k=j_{0}}^{r}\alpha_{i_{k}},\, \alpha_{i_{j_{0}}}\rangle\,=\,2-1\,=\,1.$
	
	{ Case 2:} There is an integer $j_{0}+1\le k\le r$ such that $i_{k}=i_{j_{0}}.$ Let $k$ be the least such integer. Let $v_{1}=s_{j_{0}+1}\cdots s_{i_{r}}$ and $v_{2}=s_{i_{k}}\cdots s_{i_{r}}.$ Then $\langle v_{2}(\omega_{m}),\alpha_{i_{j_{0}}}\rangle=\langle v_{2}(\omega_{m}),\alpha_{i_{k}}\rangle=-1.$ On the other hand, $\langle v_{1}(\omega_{m}), \alpha_{i_{j_{0}}}\rangle=1.$ Therefore, by Proposition \ref{lem5.3} we have $m_{j_{0}}=\langle \sum_{l=j_{0}}^{k-1}\alpha_{i_{l}},\alpha_{i_{j_{0}}} \rangle=2+ \langle \sum_{l=j_{0}+1}^{k-1}\alpha_{i_{l}},\alpha_{i_{j_{0}}} \rangle=2+ \langle v_{2}(\omega_{m}),\alpha_{i_{j_{0}}}\rangle-\langle v_{1}(\omega_{m}),\alpha_{i_{j_{0}}}\rangle=2-1-1=0.$ By \cite[p. 301, Theorem 6]{LLM} vanishing results follow immediately 
\end{proof}

\begin{corollary} \label{prop5.5}
	Let $w$ and $\underline{i}$ be as above. Then $Z(w,\,\underline{i})$ is weak-Fano. 
\end{corollary}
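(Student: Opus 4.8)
The plan is to invoke the standard characterization that a smooth projective variety $X$ over $\mathbb{C}$ is weak Fano precisely when its anti-canonical line bundle $K_X^{-1}$ is nef and big. Accordingly, it suffices to verify these two properties for the line bundle $\mathcal{L}\,:=\,K_{Z(w,\underline{i})}^{-1}$ on $Z(w,\,\underline{i})$, which is smooth and projective by the discussion in Section \ref{Sec2}.

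First I would establish nefness. By Theorem \ref{lem1.3}, the line bundle $\mathcal{L}$ is globally generated, hence base-point free; therefore the complete linear system $|\mathcal{L}|$ defines a morphism $\psi\,:\,Z(w,\,\underline{i})\,\longrightarrow\, \mathbb{P}(H^0(Z(w,\,\underline{i}),\,\mathcal{L})^{*})$ with $\mathcal{L}\,\simeq\,\psi^{*}\mathcal{O}(1)$. Since $\mathcal{O}(1)$ is nef on projective space and nefness is preserved under pullback, $\mathcal{L}$ is nef: indeed, for every irreducible curve $C\,\subset\, Z(w,\,\underline{i})$ one has $\mathcal{L}\cdot C\,=\,\deg(\mathcal{O}(1)|_{\psi(C)})\cdot\deg(\psi|_{C})\,\ge\, 0$ when $\psi(C)$ is a curve, and $\mathcal{L}\cdot C\,=\,0$ when $\psi(C)$ is a point.

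Next, bigness is already available: Corollary \ref{cor2.3} shows that $K_{Z(w,\underline{i})}^{-1}$ is big for every $w$ and every tuple $\underline{i}$ associated to a reduced expression of $w$, in particular for a minuscule Weyl group element. Combining the two properties, $K_{Z(w,\underline{i})}^{-1}$ is nef and big, so $Z(w,\,\underline{i})$ is weak Fano.

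I do not expect a genuine obstacle here: the substantive content has all been carried out in Theorem \ref{lem1.3} (global generation of the anti-canonical bundle for minuscule $w$) and in Corollary \ref{cor2.3} (bigness for arbitrary $w$), and the only remaining point is the elementary implication ``globally generated $\Rightarrow$ nef'', which requires no computation. The closest thing to a subtlety is merely recalling the correct definition of weak Fano (nef and big anti-canonical class, as opposed to Fano, which additionally demands ampleness) and noting that $Z(w,\,\underline{i})$ is indeed smooth and projective.
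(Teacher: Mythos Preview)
Your argument is correct and follows essentially the same route as the paper: invoke Corollary~\ref{cor2.3} for bigness of $K_{Z(w,\underline{i})}^{-1}$, invoke Theorem~\ref{lem1.3} for global generation (hence nefness), and conclude weak-Fano. The paper simply cites \cite[p.~52, Example~1.4.5]{L} for the implication ``globally generated $\Rightarrow$ nef'' where you spell it out via the morphism to projective space.
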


\begin{proof}
By Corollary \ref{cor2.3}, the line bundle $K_{Z(w,\underline{i})}^{-1}$ is big. On the other
hand, by Theorem \ref{lem1.3}, the line bundle $K_{Z(w,\underline{i})}^{-1}$ is globally
generated. In particular, 
$K_{Z(w,\underline{i})}^{-1}$ is nef (see \cite[p. 52, Example 1.4.5.]{L}). Therefore, $Z(w,
\underline{i})$ is weak-Fano.
\end{proof}

\section{Some examples in type $A_{n-1}$}

In this section we give some examples of the reduced expressions of the longest element $w_0$ for type $A_{n-1}$ for which 
anti-canonical line bundle is globally generated.
	
\begin{example}\label{prop6.1}
	Let $G$ be of type $A_{n-1}.$ Consider the reduced expression $$w_{0}\,=\,s_{1}(s_{2}s_{1})(s_{3}s_{2}s_{1})\cdots (s_{n-1}s_{n-2}\cdots s_{1}).$$ Let $\underline{i}\,=\,(i_{1},\,\cdots ,\,i_{N})$ be the tuple associated to the above reduced expression of
	$w_{0},$ where $N\,=\,{{n}\choose{2}}.$ Observe that for every integer
$1\,\le\, i\,\le \,n-1,$ the number of distinct $s_{j}$'s that are not commuting with $s_{i}$ and
appearing between two consecutive appearances of $s_{i}$ in the above reduced expression of $w_{0}$ is at most two.
	Hence, by using Proposition \ref{lem5.3} we have $m_{j}\,\ge\, 0$ for all $1
\,\le\, j\,\le\, N.$ Therefore, by \cite[p. 465, Corollary 3.3]{LT} it follows that
the anti-canonical line bundle $K_{Z(w_0,\underline{i})}^{-1}$ on $Z(w_{0},\,\underline{i})$ is globally generated. 
	\end{example}

	\begin{example}\label{cor6.2}
	Let $G$ be of type $A_{n-1}.$	Consider the reduced expression $$w_{0}\,=\,s_{n-1}(s_{n-2}s_{n-1})\cdots (s_{1}\cdots s_{n-1}).$$ Let
		$\underline{i}$ be the tuple associated to it. Then the anti-canonical line bundle on $Z(w_{0},\,\underline{i})$ is globally generated. 
	\end{example}

	\begin{example}
		Let $G$ be of type $A_{2}.$ Then  $Z(w_{0},\,\underline{i})$ is Fano for any reduced expression $\underline{i}$ of $w_{0}$.
	\end{example}

	\begin{proposition}\label{prop6.4}
		Assume that $G\,=\,{\rm SL}(4,\mathbb{C}).$ Let $\underline{i}$ be the tuple associated to a reduced expression of $w_{0}.$
		Then the anti-canonical line bundle on $Z(w_{0},\,\underline{i})$ is globally generated. 
	\end{proposition}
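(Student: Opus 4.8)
The plan is to invoke the explicit formula of Proposition~\ref{lem5.3} together with \cite[p.~465, Corollary~3.3]{LT}; these reduce the assertion to checking that the integers $m_1,\dots,m_r$ (here $r=\ell(w_0)=6$) attached to the reduced expression $\underline{i}=(i_1,\dots,i_6)$ are all nonnegative. So fix $1\le j\le 6$ and set $a:=i_j$. Introduce the ``window'' subword $I(j):=(i_{j+1},\dots,i_{q-1})$ if $s_a$ occurs again in $\underline{i}$, with $q$ the first such later position, and $I(j):=(i_{j+1},\dots,i_6)$ otherwise. In either case $I(j)$ is a contiguous substring of a reduced word, hence itself reduced, and none of its letters equals $a$; therefore $I(j)$ is a reduced expression of an element of the parabolic subgroup $W_{S\setminus\{\alpha_a\}}$ of $W$. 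Since the root system is of type $A_3$, hence simply laced, we have $\langle\alpha_a,\alpha_a\rangle=2$ and $\langle\alpha_b,\alpha_a\rangle\in\{0,-1\}$ for $b\neq a$, so Proposition~\ref{lem5.3} gives $m_j=2-N_j$, where $N_j$ is the number of entries of $I(j)$ whose index is a Dynkin neighbour of $\alpha_a$. Thus the proposition comes down to proving $N_j\le 2$ for every $j$.

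The second step is a short case analysis on $a$, using that $G={\rm SL}(4,\mathbb{C})$. If $a=2$, then $W_{S\setminus\{\alpha_2\}}=\langle s_1,s_3\rangle$ is of type $A_1\times A_1$, whose longest element has length $2$; hence $\ell(I(j))\le 2$, and in particular $N_j\le\ell(I(j))\le 2$. If $a\in\{1,3\}$, then $\alpha_2$ is the unique Dynkin neighbour of $\alpha_a$, and $W_{S\setminus\{\alpha_a\}}$ is a Weyl group of type $A_2$ generated by the two simple reflections distinct from $s_a$; since any reduced word for an element of a rank-two Weyl group of type $A_2$ uses each of the two generators at most twice (the only element realizing the bound being the longest element, of length $3$), the reflection $s_2$ occurs at most twice in $I(j)$, so again $N_j\le 2$. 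In every case $m_j=2-N_j\ge 0$.

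Once $m_j\ge 0$ holds for $1\le j\le 6$, \cite[p.~465, Corollary~3.3]{LT} shows that $K_{Z(w_0,\underline{i})}^{-1}=\mathcal{O}_1(m_1)\otimes\cdots\otimes\mathcal{O}_6(m_6)$ is globally generated, which is the claim. The only delicate point is the combinatorial estimate $N_j\le 2$, and it is precisely here that the hypothesis $G={\rm SL}(4,\mathbb{C})$ is used, through the smallness of the parabolic subgroups $W_{S\setminus\{\alpha_a\}}$ and of their longest elements. This is also where the argument breaks for ${\rm SL}(5,\mathbb{C})$: there $W_{S\setminus\{\alpha_2\}}$ is of type $A_1\times A_2$, and a reduced window such as $s_1 s_3 s_4 s_3$ contains three neighbours of $\alpha_2$, forcing $m_j=-1$ for a suitable reduced expression of $w_0$, in accordance with Remark~\ref{rm6.7}. (One could alternatively verify the statement by listing the $16$ reduced words of the longest element of $S_4$, but the uniform argument above seems cleaner and explains the phenomenon.)
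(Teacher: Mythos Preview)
Your proof is correct and is genuinely different from the paper's. The paper argues by first invoking \cite[Theorem~3.1]{CK} to reduce to reduced expressions up to commuting relations, then enumerates the nine such classes of reduced words for $w_0\in S_4$ (three for each possible final letter) and applies Proposition~\ref{lem5.3} to compute the tuple $(m_1,\dots,m_6)$ explicitly in each class. Your argument replaces this enumeration by the uniform observation that the ``window'' $I(j)$ is a reduced word in the parabolic subgroup $W_{S\setminus\{\alpha_a\}}$, whose small size in type $A_3$ (either $A_1\times A_1$ or $A_2$) forces $N_j\le 2$. This is cleaner: it avoids both the case analysis and the appeal to \cite{CK}, and it makes transparent exactly where the hypothesis $G={\rm SL}(4,\mathbb{C})$ enters and why the argument breaks for ${\rm SL}(5,\mathbb{C})$, in line with Example~\ref{ex6.5} and Remark~\ref{rm6.7}. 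The paper's approach, on the other hand, yields the precise values of the $m_j$, not merely their nonnegativity.
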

	
	\begin{proof}
		By \cite[p.~439, Theorem 3.1]{CK} the BSDH-varieties associated to two reduced expressions of $w$ differing only by commuting
		relations are isomorphic. Therefore, it is enough to consider the expressions up to commuting relations.
		
		Now, we argue case by case. In each case we prove that $m_{j}$'s are non-negative for one sub-case and the proof for the other sub-cases are similar.
		
		Case I:\, First observe that up to commuting relations there are exactly three reduced expressions of $w_{0}$ ending with $s_{1},$ namely
		\begin{itemize}
			\item $s_{1}s_{2}s_{1}s_{3}s_{2}s_{1}$,
			
			\item $s_{2}s_{1}s_{2}s_{3}s_{2}s_{1}$ and
			
			\item $s_{2}s_{3}s_{1}s_{2}s_{3}s_{1}.$
		\end{itemize} 
		
		Consider the reduced expression $w_{0}\,=\,s_{1}s_{2}s_{1}s_{3}s_{2}s_{1}$ and $\underline{i}\,=\,(1,\,2,\,1,\,3,\,2,\,1).$
		Using Proposition \ref{lem5.3} we see that $(m_{1},\,m_{2},\,m_{3},\,m_{4},\,m_{5},\,m_{6})=\,(1,\,0,\,1,\,1,\,1,\,2).$

		Case II:\, Observe that up to commuting relations there are exactly three reduced expressions of $w_{0}$ ending with $s_{2},$ namely
		\begin{itemize}
			\item $s_{1}s_{2}s_{3}s_{2}s_{1}s_{2}$,
			
			\item $s_{3}s_{2}s_{1}s_{2}s_{3}s_{2}$ and
			
			\item $s_{1}s_{3}s_{2}s_{3}s_{1}s_{2}.$
		\end{itemize} 
		
		Consider the reduced expression $w_{0}\,=\,s_{1}s_{2}s_{3}s_{2}s_{1}s_{2}$ and $\underline{i}\,=\,(1,\,2,\,3,\,2,\,1,\,2).$ Then by using Proposition \ref{lem5.3} we see that $(m_{1},\,m_{2},\,m_{3},\,m_{4},\,m_{5},\,m_{6})\,=\,(0,\,1,\,0,\,1,\,1,\,2).$

		Case III:\, Observe that up to commuting relations there are exactly three reduced expressions of $w_{0}$ ending with $s_{3},$ namely
		\begin{itemize}
			\item $s_{2}s_{3}s_{1}s_{2}s_{1}s_{3}$,
			
			\item $s_{3}s_{2}s_{3}s_{1}s_{2}s_{3}$ and
			
			\item $s_{2}s_{3}s_{2}s_{1}s_{2}s_{3}$ 
			
		\end{itemize} 
		
		Consider the reduced expression $w_{0}\,=\,s_{2}s_{3}s_{1}s_{2}s_{1}s_{3}$ and $\underline{i}\,=\,(2,\,3,\,1,\,2,\,1,\,3).$ Then by using Proposition \ref{lem5.3} we see that $(m_{1},\,m_{2},\,m_{3},\,m_{4},\,m_{5},\,m_{6})\,=\,(0,\,1,\,1,\,0,\,2,\,2).$

Therefore, by using \cite[p. 465, Corollary 3.3]{LT} and combining the above three cases proof of the proposition follows.
	\end{proof}
	
	We give some examples showing that Proposition \ref{prop6.4} is not
	valid for $G\,=\,{\rm SL}(n,\mathbb{C})$ when $n\,\ge\, 5$.
	
	\begin{example}\label{ex6.5}
		Set $G\,=\,{\rm SL}(5,\mathbb{C}).$
			 Consider $w_0\,=\,s_{3}s_{2}s_{1}s_{4}s_{3}s_{2}s_{3}s_{1}s_{4}s_{3}.$ 
			 
			 Let		 
			  $\underline{i}\,=(3,\, 2,\, 1,\, 4,\, 3,\, 2,\, 3,\, 1,\, 4,\, 3).$ Then
the anti-canonical line bundle 
$K_{Z(w_0,\underline{i})}^{-1}$ is not globally generated because by using Proposition \ref{lem5.3} we see that
$$(m_{1},\,m_{2},\,m_{3},\,m_{4},\,m_{5},\,m_{6},\,m_{7},\,m_{8},\,m_{9},\,m_{10})\, =\,(\,0,\,0,\,1,\,0,\,1,\,-1,\,1,\,2,\,1,\,2).$$ Therefore, by \cite[p. 465, Corollary 3.3]{LT} $K_{Z(w_{0},\underline{i})}^{-1}$ is not globally generated. But, 
by \cite[p.~472, Theorem 7.4]{LT} (or by using Lemma \ref{lem3.1}(ii)), all the higher
cohomology groups vanish.
	\end{example}
	
	\begin{example}
		Set $G\,=\,{\rm SL}(6,\mathbb{C}).$ Consider $w\,=\,s_{3}s_{4}s_{5}s_{4}s_{2}s_{1}s_{2}$ and $$\underline{i}\,=\,(3,\,4,\,5,\,4,\,2,\,1,\,2).$$
		Then the anti-canonical line bundle $K_{Z(w,\underline{i})}^{-1}$ is not globally generated because by using Proposition \ref{lem5.3} we see that $(m_{1},\,m_{2},\,m_{3},\,m_{4},\,m_{5},\,m_{6},\,m_{7})\,=\,(-2,\,1,\,1,\,2,\,1,\,1,\,2).$ Further,
		using Lemma \ref{lem3.1}, we see that $H^1(Z(w,\,\underline{i}), \,K_{Z(w,\underline{i})}^{-1})\,\neq\, 0.$
	\end{example}
	
	\begin{remark}\label{rm6.7}
		The anti-canonical line bundle of BSDH variety $Z(w,\,\underline{i})$ not only depends on $w$ but also
		it depends on the chosen reduced expression $\underline{i}.$ For example, this can be seen from
		Example \ref{prop6.1} and Example \ref{ex6.5}. 
	\end{remark}
	\section*{Acknowledgements}
	
We are very grateful to the 
referees for careful reading, many valuable comments and suggestions to improve the 
exposition of the article. We thank a referee for suggesting Proposition \ref{prop4.0} and 
providing its proof.
The second named author would like to thank the Infosys Foundation for the partial financial 
support. The first named author is partially supported by a J. C. Bose Fellowship.

\end{document}